\DeclareMathAlphabet{\mathpzc}{OT1}{pzc}{m}{it}
\newtheorem{theorem}{Theorem}[section]
\newtheorem{lem}[theorem]{Lemma}
\newtheorem{prop}[theorem]{Proposition}
\numberwithin{equation}{section}
\theoremstyle{definition}
\newtheorem{defn}[theorem]{Definition}
\begin{document}

\title[Grand variable Herz-Morrey-Hardy spaces] {Grand variable Herz-Morrey-Hardy Spaces  and applications to the boundedness of integral operators}
\author[ ,B. Sultan, A. Hussain  M. Sultan ]{     Babar Sultan$^{1^*}$, Amjad Hussain$^{1}$ and  Mehvish Sultan$^2$ }%
\address{ $^{1^*}$Department of Mathematics, Quaid-I-Azam University, Islamabad  45320, Pakistan.}

\email{  babarsultan40@yahoo.com, a.hussain@qau.edu.pk }%
\address{$^2$Department of Mathematics, Capital University of Science and Technology, Islamabad, Pakistan.}
\email{mehvishsultanbaz@gmail.com}%

\subjclass[2020]{ 46E30,  47B38}
\keywords{Grand variable Herz-Morrey spaces, sublinear operators, grand Herz-Morrey-Hardy spaces}
\date{\today}
\dedicatory{}%
\begin{abstract}
In this paper, the concept of grand variable Herz-Morrey-Hardy spaces are introduced. We also establish the atomic characterization of these spaces. As an application the authors investigate the continuity of a few singular integral operators. We also obtain the boundedness of commutators of homogeneous fractional integrals and their Lipschitz estimate  on these spaces by applying the characterization. 
\end{abstract}
\maketitle

\section{Introduction and preliminaries }
Beurling \cite{grafakos2} initially pioneered a rudimentary iteration of Herz spaces for examining convolution algebras. Subsequently, Herz \cite{grafakos15} introduced variations of these spaces within a slightly altered framework. Since then, the theory of Herz spaces has undergone substantial advancement, proving highly advantageous in harmonic analysis. Notably, they were employed by Baernstein and Sawyer \cite{grafakos1} for delineating multipliers on the standard Hardy spaces. Considerable development has occurred in the theory of Hardy spaces within the context of Herz spaces. Chen and Lau \cite{grafakos3} introduced Hardy spaces associated with Beurling algebras $A^q$ on the real line with $1 < q < 2$. Garc\'ia-Cuerva \cite{grafakos10} extended the theory of \cite{grafakos3} to higher dimensions and all $q$ with $1 < q < 1$. Lu and Yang \cite{grafakos18,grafakos19},  established the theory of corresponding homogeneous spaces. More recently, García-Cuerva and Herrero \cite{grafakos11} as well as Lu and Yang \cite{grafakos20,grafakos21,grafakos22,grafakos23} independently advanced the real Hardy space theory for Herz spaces.

\medskip

Many classical function spaces, alongside Hardy-type spaces linked with operators, exhibit atomic and molecular decompositions. These decompositions simplify the action of linear operators on these spaces significantly; see \cite{chai20, chai23, chai24, chai28, chai29, chai30}. Over the past three decades, there has been a growing interest in the exploration of variable exponent spaces. These spaces find applications in diverse fields such as electrorheological fluid dynamics \cite{xu7}, differential equations \cite{xu8} and associated literature, as well as image restoration \cite{xu9,xu10,xu11}, among others. Additionally, the atomic, molecular, and wavelet decompositions of variable exponent Besov and Triebel-Lizorkin spaces have been presented in publications such as  \cite{xu13,xu14,xu20,xu21,xu24}. Reference \cite{xu25} contains discussions on the duality and reflexivity of spaces $B_{p(\cdot), q}^{s}$ and $F_{p(\cdot) q}^{s}$. Moreover, \cite{xu22,xu26} described the atomic and molecular decompositions of Hardy spaces with variable exponents and their uses in proving the boundedness of singular integral operator

\medskip

 Herz spaces with variable exponents have emerged as a generalization of Lebesgue spaces with variable exponents. Boundedness of sublinear operators on Herz spaces with variable exponents, $\dot{K}_{p(\cdot)}^{\alpha, q}$ and $K_{p(\cdot)}^{\alpha, q}$, was shown by Izuki in \cite{n17} in 2010. Boundedness results for a wide class of classical operators on Herz spaces were later developed by Almeida and Drihem in 2012 [28].  Grand variable Herz spaces are the generalization of Herz spaces, for boundedness results in these spaces see \cite{ babargh2,  babargh4}. 
 Wang and Liu presented a specific Herz type Hardy spaces with variable exponent in \cite{xu31}, and in \cite{zliu} they established some real-variable characterizations of Herz-type Hardy spaces with variable exponent. In \cite{yanchai}, authors assumed   a nonnegative, self-adjoint differential operator $L$ and  introduced the Herz-type Hardy spaces associated with  the operator $L$. Then,  they proved the atomic and molecular decompositions of the Herz-type Hardy spaces associated with operator $L$. As applications, the boundedness of some singular integral operators on Herz-type Hardy spaces associated with operators is obtained. The idea of grand variable Herz-Morrey spaces were introduced by Sultan et.al. and obtain boundedness results in these spaces \cite{ghm1, ghm2,  ghm4}. In \cite{hmh},  the authors introduced Herz-Morrey-Hardy spaces with variable exponents and established the characterization of these spaces in terms of atom. The authors were able to determine the boundedness of certain singular integral operators on these spaces by applying the characterization. In this paper, we define the idea of grand variable Herz-Morrey-Hardy Spaces  and using the characterization we obtain boundedness results in these spaces.

The paper's structure is outlined as follows: In the subsequent section, we will review pertinent definitions and concepts. Section 2 will introduce the  grand variable Herz-Morrey-Hardy Spaces , denoted as  $H M\dot{K} ^{\alpha(\cdot),u),\theta}_{ \Gamma, p(\cdot)}(\mathbb{R}^n)$ and  $H M\dot{K} ^{\alpha(\cdot),u),\theta}_{ \Gamma, p(\cdot)}(\mathbb{R}^n)$, and provide their atomic characterization. Section 3 will demonstrate the boundedness of certain singular integral operators from grand variable Herz-Morrey-Hardy Spaces  to grand variable Herz-Morrey spaces, leveraging the atomic characterization. In last section we prove the boundedness of the commutaors of homogenous fractional integrals in these spaces by applying characterizations.

To begin, we provide notation and fundamental definitions regarding Lebesgue spaces with variable exponents.
\medskip
Let $E$ be an open set in  $\mathbb{R}^{n}$, consider  a measurable function $p(\cdot):E\rightarrow[1,\infty)$. The conjugate exponent denoted by $p'(\cdot)$, is  defined as $p'(\cdot)=p(\cdot)/(p(\cdot)-1).$

The set $\mathcal{P}(E)$ comprises all functions $p(\cdot):E\rightarrow[1,\infty) $. We suppose that
	\begin{equation}\label{eq1}
		1\leq p^-(E)\leq p(x) \leq p^+(E)<\infty,
	\end{equation}
 
 such that

\begin{align*}
p^{-}&=\operatorname{ess} \inf\bigl\{ p(x):x\in E\bigr\} >1, \\ p^{+}&=\operatorname{ess} \sup\bigl\{ p(x):x\in E\bigr\} < \infty.
\end{align*}

\medskip

We use the notation $L^{p(\cdot)}(E)$ to represent the space of all measurable functions $g$ defined on $E$, such that, for a certain $\eta>0$, $\int_{E} \biggl(\frac{ \vert f(x) \vert }{\eta} \biggr)^{p(y)}\,dy < \infty.$
This is a Banach function space that has the Luxemburg-Nakano norm 

$$\Vert f \Vert _{L^{p(\cdot)}(E)}=\inf \biggl\{ \eta>0: \int_{E} \biggl(\frac{ \vert f(y) \vert }{\eta} \biggr)^{p(y)}\,dy \leq1 \biggr\}.$$

\medskip

The local space $L_{\mathrm{loc}}^{p(\cdot)}(\varOmega)$ is characterized as  $L_{\mathrm{loc}}^{p(\cdot)}(\varOmega):=\{f: f\in L^{p(\cdot)}(E)$ for all compact subsets ${E\subset\varOmega\}}$.

Assume $f\in L_{\mathrm{loc}}^{1}(\mathbb{R}^{n})$, the definition of the Hardy-Littlewood maximal operator is expressed as

$$Mf(x)=\sup_{r>0}\frac{1}{ \vert B_{r}(x) \vert } \int_{B_{r}(x)} \bigl\vert f(y) \bigr\vert \,dy,$$
where $B_{r}(x)=\{y\in\mathbb{R}^{n}:|x-y|< r\}.$

The set $\mathcal{B}(\mathbb{R}^{n})$  is comprised of $p(\cdot)\in\mathcal{P}(\mathbb{R}^{n})$ that fulfill the requirement that $M $ is bounded on  $L^{p(\cdot)}(\mathbb{R}^{n}).$

Now we will define the well known $\log$-condition
\begin{equation}\label{r5}
	{|p(h_1)-p(h_2)|\leq \frac{C(p)}{-\ln|h_1-h_2|}, \;\;\;|h_1-h_2|\leq \frac{1}{2},\; \;\; h_1,h_2\; \in E},
\end{equation}
where $C(p)>0$. And the decay condition: there exists a number $p_\infty \in (1, \infty)$, such that

\begin{equation}\label{r6}
	|p(h)-p_\infty|\leq \frac{C}{\ln(e+|h|)},
\end{equation}

and also decay condition
\begin{equation}\label{r7}
	|p(h)-p_0|\leq \frac{C}{\ln|h|},|h|\leq\frac{1}{2},
\end{equation}

holds for some $p_0 \in (1, \infty)$.
We use these notations in this article:
\begin{enumerate}
	\item[(i)] The set $\mathcal{P}(E)$  {consists} of all measurable functions $p(\cdot)$ satisfying	$	p^-> 1$ and $p^+< \infty.$
	
	\item[(ii)] $\mathcal{P}^{\log} = \mathcal{P}^{\log}(E)$  {consists} of all functions $p \in \mathcal{P}(E)$ satisfying \eqref{eq1}  and \eqref{r5}.
	
	\item[(iii)]  $\mathcal{P}_\infty (E)$ and $\mathcal{P}_{0,\infty} (H)$ are the subsets of $\mathcal{P}(E)$ and  values of these subsets lies in $[1, \infty)$  which  satisfy the  condition \eqref{r6} and both conditions \eqref{r6} and \eqref{r7} respectively. 
	\item[(iv)]  $$\chi _ l =\chi_{F_l},\ F_l=B_l \setminus B_{m_0-1},\ B_l=B(0,2^l)=\{x \in \mathbb{R}^n : |x| < 2^l\}$$ for all $l\in  \mathbb{Z}$.
 \item[(v)] Let $E$ be a measurable subset in $\mathbb{R}^{n}$, then $|E|$ denotes the Lebesgue measure and $\chi_{E}$ the characteristic function.
    \item[(vi)] Let $\beta=(\beta_1,\beta_2,\cdots,\beta_n)$ then $|\beta|$ is defined as $|\beta|=\beta_1,\beta_2+\cdots+\beta_n$.
    \item[(vii)] The symbol $\mathbb{N}_{0}$ denotes the set of all nonnegative integers. For $m \in \mathbb{N}_{0}$, we denote $\tilde{\chi}_{m}:=\chi_{F_{m}}$ if $m \geq 1$ and $\tilde{\chi}_{0}:=\chi_{B_{0}}$. 
    \item[(viii)] $C$ is a positive constant.
    \item[(ix)] By $a \lesssim b$, we mean $a \leq C b$. 
\end{enumerate}

\begin{lem}\cite{n19}

Let $g$ belong to the Lebesgue space $L^{p(\cdot)}(\mathbb{R}^{n})$ and $g$ belong to $L^{p'(\cdot)}(\mathbb{R}^{n})$, where $p(\cdot)\in \mathcal{P}(\mathbb{R}^{n})$. Then, the product function $fg$ is integrable over $\mathbb{R}^{n}$, and the following inequality holds:

$$\int_{\mathbb{R}^{n}} \bigl\vert f(x)g(x) \bigr\vert \,dx \leq r_{p} \Vert f \Vert _{{p(\cdot )}} \Vert g \Vert _{{p'(\cdot)}},$$
where $r_{p}$ is defined as $r_{p}=1+\frac{1}{p^{-}} -\frac{1}{p^{+}}$.
\end{lem}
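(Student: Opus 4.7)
The plan is to reduce to the normalized case and then invoke the pointwise Young inequality in the spirit of the classical Hölder proof, the only subtlety being to track the constant so that it yields exactly $r_p = 1 + \frac{1}{p^-} - \frac{1}{p^+}$.

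First, I would dispose of the trivial cases $\|f\|_{p(\cdot)}=0$ or $\|g\|_{p'(\cdot)}=0$, and then by homogeneity it suffices to prove the inequality when $\|f\|_{p(\cdot)} = \|g\|_{p'(\cdot)} = 1$. The definition of the Luxemburg--Nakano norm (together with continuity of the modular under the assumption $p^+ < \infty$) then gives the modular bounds
$$\int_{\mathbb{R}^n} |f(x)|^{p(x)}\,dx \leq 1, \qquad \int_{\mathbb{R}^n} |g(x)|^{p'(x)}\,dx \leq 1.$$

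Next, I would apply the pointwise Young inequality $ab \leq \frac{a^{p(x)}}{p(x)} + \frac{b^{p'(x)}}{p'(x)}$ to $a = |f(x)|$ and $b = |g(x)|$, with the conjugate pair $(p(x), p'(x))$, and integrate over $\mathbb{R}^n$. This yields
$$\int_{\mathbb{R}^n} |f(x)g(x)|\,dx \leq \int_{\mathbb{R}^n} \frac{|f(x)|^{p(x)}}{p(x)}\,dx + \int_{\mathbb{R}^n} \frac{|g(x)|^{p'(x)}}{p'(x)}\,dx \leq \frac{1}{p^-} + \frac{1}{(p')^-}.$$
The key arithmetic step is then identifying $(p')^-$: from $p(x) \leq p^+$ and the identity $p'(x) = p(x)/(p(x)-1)$ one checks that $p'(x) \geq p^+/(p^+-1)$, hence $(p')^- = p^+/(p^+-1)$ and $1/(p')^- = 1 - 1/p^+$. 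Substituting gives the bound
$$\int_{\mathbb{R}^n} |f(x)g(x)|\,dx \leq \frac{1}{p^-} + 1 - \frac{1}{p^+} = r_p,$$
which is precisely the claimed constant in the normalized case; the general case follows by replacing $f$ with $f/\|f\|_{p(\cdot)}$ and $g$ with $g/\|g\|_{p'(\cdot)}$.

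The only real point of care is the constant. A naive writeup would invoke Young with the constant coefficients $1/p$ and $1/p'$ and try to bound both summands by $1/p^-$ and $1/(p')^-$ separately; one must remember that $(p')^-$ is controlled by $p^+$, not by $p^-$, otherwise the constant $r_p$ will not appear. Beyond this bookkeeping issue, every step is routine, so I would not anticipate further obstacles.
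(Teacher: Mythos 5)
Your argument is correct and is exactly the classical proof of the generalized H\"older inequality from the cited reference \cite{n19} (normalize, pass to modular bounds via the unit-ball property, apply the pointwise Young inequality with the variable conjugate pair, and use $1/(p')^{-}=1-1/p^{+}$ to recover the constant $r_p$); the paper itself states this lemma without proof, so there is nothing to diverge from. The one hypothesis you implicitly rely on, $p^{-}>1$ (so that $p'(x)$ is finite and Young's inequality applies in its standard form), is part of the paper's definition of $\mathcal{P}(\mathbb{R}^n)$, so the proof is complete as written.
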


\begin{lem}\cite{n17}

Let $p(\cdot)$ be a function within the class $\mathcal{B}(\mathbb{R}^{n})$. Then there exists a positive constant $C$ such that the inequality

$$\frac{1}{ \vert B \vert } \Vert \chi_{B} \Vert _{{p(\cdot)}} \Vert \chi_{B} \Vert _{{p'(\cdot)}}\leq C$$
holds.
\end{lem}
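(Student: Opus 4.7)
The plan is to exploit the two tools the excerpt has already placed on the table: the boundedness of the Hardy-Littlewood maximal operator $M$ on $L^{p(\cdot)}(\mathbb{R}^n)$ (which is exactly the content of the hypothesis $p(\cdot)\in\mathcal{B}(\mathbb{R}^n)$) and the generalized Hölder inequality from the preceding lemma. The inequality to be proved has a well-known flavor: it says that $\|\chi_B\|_{p(\cdot)}$ and $\|\chi_B\|_{p'(\cdot)}$ are approximately dual reciprocals of $|B|$, a quantitative replacement in the variable-exponent setting of the identity $|B|^{1/p}\cdot|B|^{1/p'}=|B|$.

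First I would record the elementary pointwise lower bound
$$\frac{\chi_B(x)}{|B|}\int_B g(y)\,dy\;\leq\;M g(x),\qquad x\in\mathbb{R}^n,$$
valid for any non-negative $g\in L^{p(\cdot)}(\mathbb{R}^n)$, which is immediate from the definition of $M$ by averaging over $B$ itself. Taking the $L^{p(\cdot)}$-norm of both sides and using the hypothesis on $M$, I obtain
$$\frac{\|\chi_B\|_{p(\cdot)}}{|B|}\int_B g(y)\,dy\;\leq\;\|M g\|_{p(\cdot)}\;\leq\;C\,\|g\|_{p(\cdot)},$$
so that
$$\int_B g(y)\,dy\;\leq\;\frac{C\,|B|}{\|\chi_B\|_{p(\cdot)}}\,\|g\|_{p(\cdot)}.$$

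Next I would invoke the norm-conjugate formula for variable Lebesgue spaces, which, up to a universal constant, represents
$$\|\chi_B\|_{p'(\cdot)}\;\leq\;C\sup\!\left\{\int_{\mathbb{R}^n}\chi_B(x)g(x)\,dx:\;g\geq 0,\;\|g\|_{p(\cdot)}\leq 1\right\}.$$
Plugging the previous estimate into this supremum yields
$$\|\chi_B\|_{p'(\cdot)}\;\leq\;\frac{C\,|B|}{\|\chi_B\|_{p(\cdot)}},$$
which rearranges immediately to $\frac{1}{|B|}\|\chi_B\|_{p(\cdot)}\|\chi_B\|_{p'(\cdot)}\leq C$, the desired conclusion.

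The routine pieces are the pointwise inequality and Hölder-type duality; the only slightly delicate step is the norm-conjugate representation for $\|\chi_B\|_{p'(\cdot)}$, since in the variable setting it requires a small companion argument (via the associate-space identification with $L^{p'(\cdot)}$ up to a universal constant) rather than the clean identity enjoyed by constant exponents. That associate-space fact, however, is standard under $p(\cdot)\in\mathcal{P}(\mathbb{R}^n)$ and is precisely the content underlying the preceding generalized Hölder lemma, so the argument closes without further hypotheses.
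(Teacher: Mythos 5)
Your argument is correct and is essentially the standard proof of this lemma (the paper itself only cites it from Izuki's work, where the same duality-plus-maximal-operator argument is used). The only cosmetic point is that with the centered maximal operator defined in the paper, averaging over $B$ for $x\in B$ requires passing to the ball $B(x,2R)\supset B$, so the pointwise bound should read $\frac{\chi_B(x)}{|B|}\int_B g\,dy\leq 2^n Mg(x)$; the dimensional factor is harmlessly absorbed into $C$.
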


\begin{lem}\cite{n17}

Assuming that $p(\cdot)$ is a function in the class $\mathcal{B}(\mathbb{R}^{n}),$ there exists a positive constant $C$ such that, for every ball $B$  and every measurable subset $S$ within $B,$ the following inequalities hold:

$$\frac{ \Vert \chi_{B} \Vert _{{p(\cdot)}}}{ \Vert \chi_{S} \Vert _{{p(\cdot)}}}\leq C \frac{ \vert B \vert }{ \vert S \vert },\;\;\; \frac{ \Vert \chi_{S} \Vert _{{p(\cdot)}}}{ \Vert \chi_{B} \Vert _{{p(\cdot)}}}\leq C \biggl( \frac{ \vert S \vert }{ \vert B \vert } \biggr)^{\delta_{1}}, \;\;\;\frac{ \Vert \chi_{S} \Vert _{{p'(\cdot)}}}{ \Vert \chi_{B} \Vert _{{p'(\cdot)}}}\leq C \biggl(\frac{ \vert S \vert }{ \vert B \vert } \biggr)^{\delta_{2}},$$

where $\delta_{1}$ and $\delta_{2}$ are constants satisfying $0<\delta_{1}, \delta_{2}<1$.
\end{lem}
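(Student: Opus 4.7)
All three estimates are to be deduced from the single hypothesis $p(\cdot)\in\mathcal{B}(\mathbb{R}^n)$—equivalently, boundedness of the Hardy--Littlewood maximal operator $M$ on $L^{p(\cdot)}(\mathbb{R}^n)$—together with the openness (self-improvement) of this class.

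The first inequality is the easy one. For every $x\in B$ the definition of $M$ gives the trivial pointwise bound $M\chi_S(x)\ge |S|/|B|$, hence $\chi_B\le (|B|/|S|)\,M\chi_S$ on $\mathbb{R}^n$. Taking $L^{p(\cdot)}$-norms and invoking $p(\cdot)\in\mathcal{B}(\mathbb{R}^n)$ yields $\|\chi_B\|_{p(\cdot)}\lesssim (|B|/|S|)\,\|\chi_S\|_{p(\cdot)}$, which is exactly what is required.

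The remaining two inequalities are deeper, and the plan is to obtain them via a self-improvement argument. The key input is Diening's theorem asserting that the class $\mathcal{B}(\mathbb{R}^n)$ is open: if $p(\cdot)\in\mathcal{B}(\mathbb{R}^n)$, there exists $s>1$ such that the scaled exponent $sp(\cdot)$ still lies in $\mathcal{B}(\mathbb{R}^n)$, and the same statement applies to $p'(\cdot)$. With this in hand, I would combine three ingredients: (i) the generalized Hölder inequality for variable exponents (the first lemma of this section), applied to the decomposition $\chi_S=\chi_S\cdot\chi_B$ with the conjugate pair determined by $s$; (ii) Lemma~2, which furnishes the reverse-Hölder-type identity $\|\chi_B\|_{q(\cdot)}\|\chi_B\|_{q'(\cdot)}\sim |B|$ for $q(\cdot)$ in $\mathcal{B}(\mathbb{R}^n)$; and (iii) the first inequality already proved, now applied with the auxiliary exponent $sp(\cdot)$ in place of $p(\cdot)$. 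Rearranging produces $\|\chi_S\|_{p(\cdot)}\lesssim (|S|/|B|)^{1/s}\,\|\chi_B\|_{p(\cdot)}$, so that $\delta_1:=1/s\in(0,1)$. The third inequality follows by the same argument carried out for $p'(\cdot)$ in place of $p(\cdot)$, producing $\delta_2$.

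The main obstacle is precisely the self-improvement step: guaranteeing that $sp(\cdot)$ (or $sp'(\cdot)$) again belongs to $\mathcal{B}(\mathbb{R}^n)$ is a genuinely non-trivial result, resting on Diening's characterization of $\mathcal{B}(\mathbb{R}^n)$ in terms of a Muckenhoupt-type condition and on the openness of the associated $A_\infty$ class. Once that ingredient is accepted the remainder is a routine Hölder/duality manipulation, and the choice of $\delta_i$ is naturally dictated by how much one may enlarge the exponent while remaining inside $\mathcal{B}(\mathbb{R}^n)$.
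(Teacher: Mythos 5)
This lemma is quoted from Izuki \cite{n17} and the paper supplies no proof of it, so there is nothing in-paper to compare against; I am judging your proposal on its own terms. Your treatment of the first inequality is complete and correct: for $x\in B=B(x_0,R)$ one has $M\chi_{S}(x)\ge |S|/|B(x,2R)|=2^{-n}|S|/|B|$, hence $\chi_{B}\le 2^{n}(|B|/|S|)M\chi_{S}$ pointwise, and the boundedness of $M$ on $L^{p(\cdot)}$ finishes it. For the second and third inequalities you have also correctly isolated the one genuinely deep ingredient, Diening's openness/duality theorem for $\mathcal{B}(\mathbb{R}^{n})$, which is exactly the route of the original proof.

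Two things in your sketch need repair, though. First, the auxiliary exponent is not $sp(\cdot)$ with $s>1$: that $sp(\cdot)\in\mathcal{B}$ for $s\ge 1$ is elementary (from $(Mf)^{s}\le M(|f|^{s})$) and unhelpful, since the split $1/p=1/(sp)+1/(s'p)$ only returns $\|\chi_{S}\|_{p(\cdot)}\le C\|\chi_{S}\|_{p(\cdot)}^{1/s}\|\chi_{S}\|_{p(\cdot)}^{1/s'}$. What Diening's theorem actually provides is a $p_{0}\in(1,p^{-})$ with $p(\cdot)/p_{0}\in\mathcal{B}(\mathbb{R}^{n})$, and the useful decomposition is $\tfrac{1}{p'(x)}=\tfrac{1}{p_{0}(p(x)/p_{0})'}+\tfrac{1}{p_{0}'}$, which has one \emph{constant} slot. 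Second, the ``routine rearrangement'' is precisely where naive combinations go in circles: Lemma~1.2 gives $\|\chi_{E}\|_{q(\cdot)}\|\chi_{E}\|_{q'(\cdot)}\le C|E|$ only for balls, never for the arbitrary set $S$, and every attempt to use it on $S$ collapses to a tautology. The chain that actually closes is
\begin{equation*}
\|\chi_{S}\|_{p'(\cdot)}\lesssim \|\chi_{S}\|_{(p(\cdot)/p_{0})'}^{1/p_{0}}\,|S|^{1/p_{0}'}\le \|\chi_{B}\|_{(p(\cdot)/p_{0})'}^{1/p_{0}}\,|B|^{1/p_{0}'}\Bigl(\tfrac{|S|}{|B|}\Bigr)^{1/p_{0}'}\approx \|\chi_{B}\|_{p'(\cdot)}\Bigl(\tfrac{|S|}{|B|}\Bigr)^{1/p_{0}'},
\end{equation*}
where the first step is the generalized H\"older inequality for the split above together with $\|\chi_{S}\|_{L^{p_{0}q(\cdot)}}=\|\chi_{S}\|_{L^{q(\cdot)}}^{1/p_{0}}$, the second is mere monotonicity of the norm in the set (your invocation of the first inequality is not actually needed), and the last uses Lemma~1.2 for both $p(\cdot)$ and $p(\cdot)/p_{0}$ on the ball $B$ — this last application being the only place Diening's theorem enters. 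This gives $\delta_{2}=1/p_{0}'$, and the same argument with $p'(\cdot)$ in place of $p(\cdot)$ gives $\delta_{1}$. So your plan names the right ingredients but, as written, would not assemble into a proof without these corrections.
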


Next we  give the definition of grand variable Herz-Morrey  spaces (GVHM).

\begin{defn}
	 Let  $p :\mathbb{R}^n \rightarrow [1,\infty)$, $\alpha(\cdot) \in L^\infty(\mathbb{R}^n)$, $ u \in [1, \infty)$,  $\theta >0$, and $0 \leq \Gamma < \infty$. The norm of GVHM is defined as:
	$$M\dot{K} ^{\alpha(\cdot),u),\theta}_{ \Gamma, p(\cdot)}(\mathbb{R}^n)=\left\{ f \in L^{p(\cdot)} _{\mathrm{loc}}(\mathbb{R}^n\setminus\{0\}): \| f \| _ {M\dot{K} ^{\alpha(\cdot),u),\theta}_{ \Gamma, p(\cdot)}(\mathbb{R}^n)} < \infty \right\},$$ 
	where 
	$$\| f \| _ {M\dot{K} ^{\alpha(\cdot),u),\theta}_{ \Gamma, p(\cdot)}(\mathbb{R}^n)}=\sup \limits_{\vartheta>0 } \sup \limits _{ m_0 \in \mathbb{Z}} 2^{-m_0\Gamma } \left( \vartheta   ^\theta  \sum \limits _{{k = -\infty}}^{m_0}  \|2^{k \alpha(\cdot) } f \chi _k \| ^{u(1+\vartheta)}_{{p(\cdot)}}\right)^{\frac{1}{u (1+\vartheta)}}.$$
	For $\Gamma=0$, grand Herz-Morrey  spaces becomes grand Herz spaces.
\end{defn}

If sublinears operators satisfy the size condition
\begin{equation}\label{size condition}
|T
f(z_1) |\leq C \int \limits _{\mathbb{R}^n} |x-y|^{-n}|f(y)|\,d y, \;\;\; x \notin spt f
\end{equation}
for all $f \in L^1(\mathbb{R}^n)$ with compact support then the non-homogeneous and homogeneous Herz spaces have bounds for $T$.  
\begin{theorem}\cite{ghm5}
	Let $  1 \leq u < \infty$, $\alpha(\cdot), p(\cdot) \in \mathcal{P}_{0,\infty}(\mathbb{R}^n) $ such that
	\begin{enumerate}
		\item [(i)] $ \frac{n}{p(0)}< \alpha(0) <    \frac{n}{p'(0)}$
		\item [(ii)] $\frac{n}{p_{\infty}} < \alpha_\infty < \frac{n}{p'_{\infty} }$.
	\end{enumerate}
	 Suppose that  sublinear operators $T$  are bounded on  Lebesgue spaces and satisfying size condition (\ref{size condition}) will be bounded on $M\dot{K} ^{\alpha(\cdot),u),\theta}_{ \Gamma, p(\cdot)}(\mathbb{R}^n)$.
\end{theorem}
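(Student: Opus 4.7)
The plan is to adapt the classical Herz-space argument to the grand variable setting. Write $f=\sum_{j\in\mathbb Z}f_j$ with $f_j:=f\chi_j$, so by sublinearity
\[
\|2^{k\alpha(\cdot)}(Tf)\chi_k\|_{p(\cdot)}\leq\sum_{j\in\mathbb Z}\|2^{k\alpha(\cdot)}(Tf_j)\chi_k\|_{p(\cdot)}.
\]
Split the $j$-sum into the three blocks $j\leq k-2$, $|j-k|\leq 1$, and $j\geq k+2$. The middle block is handled directly by the assumed $L^{p(\cdot)}$-boundedness of $T$, producing a constant multiple of $\|f\chi_j\|_{p(\cdot)}$ for each of the three nearby $j$.

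For the off-diagonal blocks I would invoke the size condition \eqref{size condition}. When $x\in F_k$ and $y\in F_j$ with $j\leq k-2$ one has $|x-y|\approx 2^k$, so the generalized H\"older inequality (Lemma 1.1) yields
\[
|(Tf_j)(x)|\lesssim 2^{-kn}\|f\chi_j\|_{p(\cdot)}\|\chi_j\|_{p'(\cdot)}.
\]
Taking the $L^{p(\cdot)}$-norm over $F_k$ and then applying Lemmas 1.2 and 1.3 to convert the characteristic-function norms into geometric factors gives $\|(Tf_j)\chi_k\|_{p(\cdot)}\lesssim 2^{(j-k)n\delta_2}\|f\chi_j\|_{p(\cdot)}$; the block $j\geq k+2$ is treated symmetrically with $\delta_1$. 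Since $p(\cdot),\alpha(\cdot)\in\mathcal P_{0,\infty}$ enjoy the log/decay regularity \eqref{r5}--\eqref{r7}, on each annulus $F_k$ one may replace $2^{k\alpha(\cdot)}$ by $2^{k\alpha(0)}$ when $k\leq 0$ and by $2^{k\alpha_\infty}$ when $k\geq 0$ up to a uniform multiplicative constant. Conditions (i) and (ii) then upgrade the off-diagonal estimates to
\[
\|2^{k\alpha(\cdot)}(Tf_j)\chi_k\|_{p(\cdot)}\lesssim a_{k-j}\,\|2^{j\alpha(\cdot)}f\chi_j\|_{p(\cdot)},\qquad \sum_{\ell\in\mathbb Z}a_\ell<\infty,
\]
with geometric decay in $|k-j|$.

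The remaining step is to plug this bound into the grand norm and apply Minkowski's inequality in $\ell^{u(1+\vartheta)}$ (legal because $u(1+\vartheta)\geq 1$) to interchange the $j$- and $k$-summations; after the translation $m_0\mapsto m_0-\ell$ each resulting block is itself a copy of $\|f\|_{M\dot K^{\alpha(\cdot),u),\theta}_{\Gamma,p(\cdot)}}$ multiplied by $a_\ell\,2^{\ell\Gamma}$. Taking the suprema in $\vartheta$ and $m_0$ then closes the argument, provided $\sum_\ell a_\ell 2^{\ell\Gamma}<\infty$. The hard part, I expect, is two-fold: first, keeping every constant along the way independent of $\vartheta$, which is what legalises pulling the supremum in $\vartheta$ to the outside (the size-condition bound is $\vartheta$-free, as is the $L^{p(\cdot)}$-operator norm of $T$, so this part is essentially bookkeeping); second, verifying that the strict inequalities in (i) and (ii) leave enough room in the exponents $\delta_1,\delta_2$ to beat the Morrey weight $2^{\ell\Gamma}$, which is the only place where a compatibility condition between $\Gamma$ and $\alpha(0),\alpha_\infty$ is actually needed and is almost certainly implicit in the hypothesis that the grand norm is finite.
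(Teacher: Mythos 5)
The paper does not actually prove this statement: Theorem 1.5 is imported verbatim from \cite{ghm5}, so there is no in-text proof to compare against. Judged on its own, your sketch is the standard and correct strategy, and it matches the machinery this paper deploys in its Theorems 2.3 and 3.1: decompose $f=\sum_j f\chi_j$, treat $|j-k|\leq 1$ by the $L^{p(\cdot)}$-boundedness of $T$, treat the off-diagonal blocks via the size condition (\ref{size condition}) together with Lemmas 1.1--1.3 to get the factors $2^{(j-k)n\delta_2}$ and $2^{(k-j)n\delta_1}$, replace $2^{k\alpha(\cdot)}$ by $2^{k\alpha(0)}$ or $2^{k\alpha_\infty}$ via Proposition 1.6, and close with a Schur/H\"older argument in $\ell^{u(1+\vartheta)}$ with constants independent of $\vartheta$. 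Your identification of the two delicate points ($\vartheta$-uniformity of constants, and the interplay of the geometric decay with the Morrey weight) is exactly right. The one place where the argument is genuinely not closable from the stated hypotheses is the second of these: after interchanging the $j$- and $k$-sums, the block with $j>m_0$ forces one to bound $\|2^{j\alpha(\cdot)}f\chi_j\|^{u(1+\vartheta)}_{p(\cdot)}$ by $2^{j\Gamma u(1+\vartheta)}\|f\|^{u(1+\vartheta)}_{M\dot K}$, and the resulting series $\sum_{\ell}a_\ell 2^{\ell\Gamma u(1+\vartheta)}$ converges only if $\Gamma$ is strictly smaller than the decay exponent coming from (i)--(ii) (in the paper's own later proofs this appears as factors like $2^{(\ell-m_0)(\Gamma-\alpha_\infty/2)u(1+\vartheta)}$, requiring $\Gamma<\alpha_\infty/2$ or similar). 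Theorem 1.5 as stated imposes no such restriction on $\Gamma$, so this compatibility condition is not ``implicit in finiteness of the norm'' as you hope; it is a missing hypothesis that must be supplied (and presumably is supplied in \cite{ghm5}). Separately, note that condition (i) as printed, $n/p(0)<\alpha(0)<n/p'(0)$, is vacuous unless $p(0)>2$; the usable condition is $-n/p(0)<\alpha(0)<n/p'(0)$, which is what your $\delta_1$-estimate for the block $j\geq k+2$ actually requires.
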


The next Proposition is the generalization of variable exponents Herz spaces in \cite{cald15}. We omit the proof of Proposition 2.5 since is essentially similar to the proof given in \cite{cald15} and with slight modification we can obtain following result in grand variable Herz spaces. 
\begin{prop}
	Let $\alpha, u, p$ are as defined in definition $1.4$, then 
	\begin{align*}
	\| f \|_{M\dot{K} ^{\alpha(\cdot),u),\theta}_{ \Gamma, p(\cdot)}(\mathbb{R}^n)}= &  \sup \limits_{\epsilon   >0} \sup \limits _{ m_0 \in \mathbb{Z}} 2^{-m_0\Gamma  } \left( \vartheta   ^\theta \sum_{k =-\infty }^{m_0} \|2^{k \alpha(\cdot) }f \chi _k  \|  ^{u(1+\vartheta)}_{{p(\cdot)}}\right)^{\frac{1}{u(1+\vartheta)}} \\
 \approx & \max \left\{ \sup \limits_{\epsilon   >0} \sup \limits _{ m_0< 0, m_0 \in \mathbb{Z}} 2^{-m_0\Gamma } \left( \vartheta   ^\theta \sum_{k =-\infty }^{-1}2^{k \alpha(0) u(1+\vartheta)} \|f \chi _k  \|  ^{u(1+\vartheta)}_{{p(\cdot)}}\right)^{\frac{1}{u(1+\vartheta)}},\right.\\
	& \left. \sup \limits_{\epsilon   >0} \sup \limits _{ m_0\geq 0, m_0\in  \mathbb{Z}} 2^{-m_0\Gamma } \left( \vartheta   ^\theta \sum_{k =-\infty }^{-1}2^{k \alpha(0) u(1+\vartheta)} \|f \chi _k  \|  ^{u(1+\vartheta)}_{{p(\cdot)}}\right)^{\frac{1}{u(1+\vartheta)}} \right.\\
 & \left. + \sup \limits_{\epsilon   >0} \sup \limits _{ m_0\geq 0, m_0\in  \mathbb{Z}} 2^{-m_0\Gamma } \left( \vartheta   ^\theta \sum_{k =0 }^{m_0}2^{k \alpha_\infty u(1+\vartheta)} \|f \chi _k  \|  ^{u(1+\vartheta)}_{{p(\cdot)}}\right)^{\frac{1}{u(1+\vartheta)}} \right\}
	\end{align*}
\end{prop}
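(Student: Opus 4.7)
The plan is to exploit the two decay conditions \eqref{r7} and \eqref{r6} imposed on $\alpha(\cdot)$, which allow replacing the variable exponent by the constant $\alpha(0)$ on dyadic annuli near the origin and by the constant $\alpha_\infty$ on annuli far from it. The argument follows the lines of the variable-exponent case in \cite{cald15}; only the manner in which the grand parameter $\vartheta$ is handled requires attention.

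First I would establish the pointwise equivalences
\begin{equation*}
2^{k\alpha(x)} \approx 2^{k\alpha(0)} \quad (k \leq -1,\; x \in F_k), \qquad 2^{k\alpha(x)} \approx 2^{k\alpha_\infty} \quad (k \geq 0,\; x \in F_k),
\end{equation*}
with implicit constants independent of $k$, $x$ and $\vartheta$. For $k \leq -1$ one has $|x| \leq 2^k \leq 1/2$ on $F_k$, so \eqref{r7} gives $|\alpha(x)-\alpha(0)| \leq C/\log(1/|x|) \lesssim 1/|k|$, hence $|k(\alpha(x)-\alpha(0))|$ is uniformly bounded. For $k \geq 1$, the bound $|x| \geq 2^{k-1}$ on $F_k$ together with \eqref{r6} yields $|\alpha(x)-\alpha_\infty| \lesssim 1/k$, so $|k(\alpha(x)-\alpha_\infty)|$ is again uniformly bounded; the case $k=0$ is trivial. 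Taking $L^{p(\cdot)}$-norms these translate into
\begin{equation*}
\|2^{k\alpha(\cdot)} f \chi_k\|_{p(\cdot)} \approx 2^{k\alpha(0)}\|f\chi_k\|_{p(\cdot)} \;\;(k \leq -1), \qquad \|2^{k\alpha(\cdot)} f \chi_k\|_{p(\cdot)} \approx 2^{k\alpha_\infty}\|f\chi_k\|_{p(\cdot)} \;\;(k \geq 0).
\end{equation*}

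Next I would substitute these equivalences into the norm and split the supremum over $m_0 \in \mathbb{Z}$ into the regimes $m_0 < 0$ and $m_0 \geq 0$. In the first case every index $k \leq m_0$ satisfies $k \leq -1$, so only the $\alpha(0)$-regime contributes, recovering the first entry of the maximum. In the second case the inner sum decomposes as $\sum_{k=-\infty}^{-1} + \sum_{k=0}^{m_0}$, and the elementary inequality $(a^q+b^q)^{1/q} \leq a+b \leq 2^{1-1/q}(a^q+b^q)^{1/q}$, valid for $a,b \geq 0$ and $q \geq 1$, separates the $u(1+\vartheta)$-th root of the combined sum into the sum of the two individual $u(1+\vartheta)$-th roots, with constants at most $2$ regardless of $\vartheta$.

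The principal point to verify is uniformity in the grand parameter $\vartheta$. The constants in Step 1 depend only on the log-type parameters in \eqref{r7} and \eqref{r6}, not on $\vartheta$; the elementary inequality used above has constants in $[1,2]$ for every $q = u(1+\vartheta) \geq u \geq 1$; and the standard equivalence between the supremum of a sum and the sum of suprema costs only a factor of $2$. Combining these observations, one passes to the suprema in $\vartheta$ and $m_0$ separately in each piece of the decomposition and obtains the claimed maximum. Apart from this uniform-in-$\vartheta$ control, the proof proceeds exactly as in the classical variable-exponent setting of \cite{cald15}, which is why the authors only sketch the result.
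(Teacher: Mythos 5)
Your proposal is correct and follows exactly the route the paper intends: the authors omit the proof, deferring to the argument of Almeida--Drihem in \cite{cald15}, and your adaptation (pointwise replacement of $2^{k\alpha(\cdot)}$ by $2^{k\alpha(0)}$ for $k\leq -1$ and by $2^{k\alpha_\infty}$ for $k\geq 0$ via the log-decay conditions, followed by splitting the sum and the supremum over $m_0$) is precisely that "slight modification." Your explicit check that every constant ($2^{1-1/q}$ for $q=u(1+\vartheta)$, the log-Hölder constants, the sup-of-sum factor) is uniform in $\vartheta$ is the only genuinely new point in the grand setting, and you handle it correctly.
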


\section{ The Atomic Characterization}
In this section, we will introduce grand variable Herz-Morrey-Hardy Spaces  $H M\dot{K} ^{\alpha(\cdot),u),\theta}_{ \Gamma, p(\cdot)}(\mathbb{R}^n)$. To do this, we need to recall some notations. $\mathcal{S}\left(\mathbb{R}^{n}\right)$ denotes the Schwartz space of all rapidly decreasing infinitely differentiable functions on $\mathbb{R}^{n}$, and $\mathcal{S}^{\prime}\left(\mathbb{R}^{n}\right)$ denotes the dual space of $\mathcal{S}\left(\mathbb{R}^{n}\right)$. Let $G_{N}g $ be the grand maximal function of $g$ defined by

\begin{equation*}
G_{N}g (x):=\sup _{\phi \in \mathcal{A}_{N}}\left|\phi_{\nabla}^{*}(g)(x)\right|, \quad x \in \mathbb{R}^{n} 
\end{equation*}

where $\mathcal{A}_{N}:=\left\{\phi \in \mathcal{S}\left(\mathbb{R}^{n}\right): \sup \limits_{|\alpha|,|\beta| \leq  N, \forall x \in \mathbb{R}^{n}}\left|x^{\alpha} D^{\beta} \phi(x)\right| \leq  1\right\}$ and $N>n+1$ and $\phi_{\nabla}^{*}$ is the nontangential maximal operator defined by

$$\phi_{\nabla}^{*}(g)(x):=\sup \limits _{|y-x|<t}\left|\phi_{t} * g(y)\right|,$$

with $\phi_{t}(x)=t^{-n} \phi\left(\frac{x}{t} \right).$

\begin{defn}
    Let $\alpha(\cdot) \in L^{\infty}\left(\mathbb{R}^{n}\right), 0<q \leq \infty, p(\cdot) \in \mathcal{P}\left(\mathbb{R}^{n}\right)$, $0 \leq \Gamma<\infty$, and $N>n+1$. The grand Herz-Morrey-Hardy space with variable exponents $H M\dot{K} ^{\alpha(\cdot),u),\theta}_{ \Gamma, p(\cdot)}(\mathbb{R}^n)$ is defined  by
$$H M \dot{K} ^{\alpha(\cdot),u),\theta}_{ \Gamma, p(\cdot)}(\mathbb{R}^n) \\
\quad:=\left\{g \in \mathcal{S}^{\prime}\left(\mathbb{R}^{n}\right):\|g\|_{HM\dot{K} ^{\alpha(\cdot),u),\theta}_{ \Gamma, p(\cdot)}(\mathbb{R}^n)}:=\left\|G_{N}g \right\|_{M\dot{K} ^{\alpha(\cdot),u),\theta}_{ \Gamma, p(\cdot)}(\mathbb{R}^n)}<\infty\right\}.$$

\end{defn}

\begin{defn}

Let $p(\cdot) \in \mathcal{P}\left(\mathbb{R}^{n}\right)$ and $\alpha(\cdot) \in L^{\infty}\left(\mathbb{R}^{n}\right)$ be log-H\"older continuous both at the origin and infinity, and nonnegative integer $s \geqslant\left[\alpha_{r}-n \delta_{2}\right]$; here $\alpha_{r}=\alpha(0)$, if $r<1$, and $\alpha_{r}=\alpha_{\infty}$, if $r \geqslant 1, n \delta_{2} \leq  \alpha_{r}<\infty$ and $\delta_{2}$ as in Lemma 1.3.

(i) A function $a$ on $\mathbb{R}^{n}$ is called a central $(\alpha(\cdot), p(\cdot))$ atom, if it satisfies

\begin{enumerate}
    \item [(1)]  supp $a \subset B(0, r)$,
    \item [(2) ] $\|a\|_{{p(\cdot)}} \leq $ $|B(0, r)|^{-\alpha_{r} / n} $,
    \item [(3)] $\int_{\mathbb{R}^{n}} a(x) x^{\beta} \mathrm{d} x=0,|\beta| \leq  s$.
\end{enumerate}

(ii)  A function $a$ on $\mathbb{R}^{n}$ is called a central $(\alpha(\cdot), p(\cdot))$-atom of restricted type, if it satisfies $2,3$ and condition given below

\begin{enumerate}
    \item [(a)]supp $\alpha \subset B(0, r)$, $r \geqslant 1$.
\end{enumerate}
 
\end{defn}

\begin{theorem}

 Let $0<u<\infty, p(\cdot) \in \mathcal{B}\left(\mathbb{R}^{n}\right), 0 \leq \Gamma<\infty$, and $\alpha(\cdot) \in L^{\infty}\left(\mathbb{R}^{n}\right)$ be log-H\"older continuous both at the origin and infinity, $2 \lambda \leq  \alpha(\cdot), n \delta_{2} \leq  \alpha(0), \alpha_{\infty}<\infty$, and $\delta_{2}$ as in Lemma 1.3. Then

 $g \in H M \dot{K} ^{\alpha(\cdot),u),\theta}_{ \Gamma, p(\cdot)}(\mathbb{R}^n)$ iff $g=\sum_{k=-\infty}^{\infty} \lambda_{k} a_{k}$ in the sense of $\mathcal{S}^{\prime}\left(\mathbb{R}^{n}\right)$, where each $a_{k}$ is a central $(\alpha(\cdot), p(\cdot))$-atom with support contained in $B_{k}$ and $\sup\limits_{\vartheta>0 }\sup\limits  _{m_0\in  \mathbb{Z}} 2^{-m_0\Gamma} \epsilon  ^\theta\sum_{k=-\infty}^{m_0}\left|\lambda_{k}\right|^{u(1+\vartheta)}<\infty$. Moreover,

\begin{equation*}
\|g\|_{H M \dot{K} ^{\alpha(\cdot),u),\theta}_{ \Gamma, p(\cdot)}(\mathbb{R}^n)} \approx \inf \left(\sup_{\vartheta>0 }\sup _{m_0\in  \mathbb{Z}} 2^{-m_0\Gamma}\left(\epsilon  ^\theta\sum_{k=-\infty}^{m_0}\left|\lambda_{k}\right|^{u(1+\vartheta)}\right)^{1 / (u(1+\vartheta))} \right).
\end{equation*}

\begin{proof}

Firstly we will prove that $1\Rightarrow 2$. Let $\phi \in$ $C_{0}^{\infty}\left(\mathbb{R}^{n}\right)$ (the space of continuously differentiable complex-valued functions) such that $\phi \geqslant 0, \int_{\mathbb{R}^{n}} \phi(x) \mathrm{d} x=1$, and supp $\phi \subset\{x$ : $|x| \leq  1\}$.

Let $x \in \mathbb{R}^{n}$, $i\in  \mathbb{N}_{0}$, and $$\phi_{(i)}(x):=2^{\text {in }} \phi\left(2^{i} x\right).$$

Let $x \in \mathbb{R}^{n}$, for each $g \in \mathcal{S}^{\prime}\left(\mathbb{R}^{n}\right)$, set $$g^{(i)}(x)=g * \phi_{(i)}(x).$$

It is obvious that $g^{(i)} \in C^{\infty}\left(\mathbb{R}^{n}\right)$ and $\lim _{j \rightarrow \infty} g^{(i)}=g$ in $\mathcal{S}^{\prime}\left(\mathbb{R}^{n}\right)$. Let $\epsilon  $ be a radial smooth function such that supp $\epsilon   \subset\{x$ : $1 / 2-\varepsilon \leq |x| \leq  1+\varepsilon\}$ with $0<\varepsilon<1 / 4$, $\epsilon  (x)=1$ for $1 / 2 \leq |x| \leq  1$. Let $\epsilon  _{k}(x):=\epsilon  \left(2^{-k} x\right)$ for $k \in \mathbb{Z}$ and

\begin{equation*}
\widetilde{H}^k_{ \varepsilon}:=\left\{x: 2^{k-1}-2^{k} \varepsilon \leq |x| \leq  2^{k}+2^{k} \varepsilon\right\} . 
\end{equation*}

It is easy to see that supp $\epsilon  _{k} \subset \widetilde{H}^k_{ \varepsilon}$ and $\epsilon  _{k}(x)=1$ for $x \in A_{k}:=$ $\left\{x: 2^{k-1} \leq |x| \leq  2^{k}\right\}$.

Obviously, $1 \leq  \sum_{k=-\infty}^{\infty} \epsilon  _{k}(x) \leq  2, x \neq 0$. Let

\[
\Phi_{k}(x):= \begin{cases}\frac{\epsilon  _{k}(x)}{\sum_{\ell=-\infty }^{\infty} \epsilon  _{l}(x)}, & x \neq 0  \\ 0, & x=0\end{cases}
\]

then $\sum_{k\in \mathbb{Z}} \Phi_{k}(x)=1$ for $x \neq 0$. For each $m \in \mathbb{N}$, we denote by $\mathcal{P}_{m}$ the class of all the real polynomials with the degree less than $m$.

Let $|\beta| \leq  m=\max \left\{\left[\alpha(0)-n \delta_{2}\right],\left[\alpha_{\infty}-n \delta_{2}\right]\right\}$ and  $P^{(i)}_{k}(x):=P_{\widetilde{H}^k_{ \varepsilon}}\left(g^{(i)} \Phi_{k}\right)(x) \chi_{\widetilde{H}^k_{ \varepsilon}} \in \mathcal{P}_{m}\left(\mathbb{R}^{n}\right)$ be the unique polynomial satisfying

\begin{gather*}
\int_{\widetilde{H}^k_{ \varepsilon}}\left(g^{(i)}(x) \Phi_{k}(x)-P^{(i)}_{k}(x)\right) x^{\beta} \mathrm{d} x=0.
\end{gather*}

Write

\begin{align*}
g^{(i)}(x) & =\sum_{k=-\infty}^{\infty}\left(g^{(i)}(x) \Phi_{k}(x)-P^{(i)}_{k}(x)\right)+\sum_{k=-\infty}^{\infty} P^{(i)}_{k}(x)  :=I_{(i)}+I I_{(i)}. 
\end{align*}

Consider the first term $I_{(i)}=\sum_{k=-\infty}^{\infty}\left(g^{(i)}(x) \Phi_{k}(x)-P^{(i)}_{k}(x)\right).$

Suppose that $a_{k}^{(i)}(x)=\frac{g_{k}^{(i)}(x)} {\lambda_{k}}$, where we define $\lambda_k$ as $\lambda_{k} \quad:=$ $b\left|B_{k+1}\right|^{\alpha_{k+1} / n} \sum_{\ell=k-1}^{k+1}\left\|\left(G_{N}g \right) \chi_\ell \right\|_{{p(\cdot)}}$ and $b$ is a constant which will be chosen later, let $g_{k}^{(i)}(x):=g^{(i)}(x) \Phi_{k}(x)-$ $P^{(i)}_{k}(x)$. It is easy to check that  supp $a_{k}^{(i)} \subset B_{k+1}, I_{(i)}=$ $\sum_{k=-\infty}^{\infty} \lambda_{k} a_{k}^{(i)}(x)$.

Next we will find estimate for 
 $\left\|g_{k}^{(i)}\right\|_{{p(\cdot)}}$.

To do this, let $\left\{\phi_{\gamma}^{k}:|\gamma| \leq  m\right\}$ be the orthogonal polynomials restricted to $\widetilde{H}^k_{ \varepsilon}$ with respect to the weight $\frac{1}{\left|\widetilde{H}^k_{ \varepsilon}\right|}$, which are obtained from $\left\{x^{\beta}:|\beta| \leq  m\right\}$ by the Gram-Schmidt method, which means

\begin{equation*}
\left\langle\phi_{\nu}^{k}, \phi_{\mu}^{k}\right\rangle=\frac{1}{\left|\widetilde{H}^k_{ \varepsilon}\right|} \int_{\widetilde{H}^k_{ \varepsilon}} \phi_{\nu}^{k}(x) \phi_{\mu}^{k}(x) \mathrm{d} x=\delta_{\nu \mu}, 
\end{equation*}

where $\delta_{\nu \mu}=1$ for $\nu=\mu$, otherwise 0 .

If $x \in \widetilde{H}^k_{ \varepsilon}$, then we have
$$P^{(i)}_{k}(x)=\sum_{|\gamma| \leq  m}\left\langle g^{(i)} \Phi_{k}, \phi_{\gamma}^{k}\right\rangle \phi_{\gamma}^{k}(x).$$

On the other hand, by using the fact $$\left(\frac{1} {\left|\widetilde{H}^k_{ \varepsilon}\right|}\right) \int_{\widetilde{H}^k_{ \varepsilon}} \phi_{\nu}^{k}(x) \phi_{\mu}^{k}(x) \mathrm{d} x=\delta_{\nu \mu},$$ we get

\begin{equation*}
\frac{1}{\left|\widetilde{H}_{1, \varepsilon}\right|} \int_{\widetilde{H}_{1, \varepsilon}} \phi_{\nu}^{k}\left(2^{k-1} y\right) \phi_{\mu}^{k}\left(2^{k-1} y\right) \mathrm{d} y=\delta_{\nu \mu}. 
\end{equation*}

Thus, we deduce $\phi_{v}^{k}\left(2^{k-1} y\right)=\phi_{\nu}^{1}(y)$ a.e.

That is, $\phi_{\nu}^{k}(x)=$ $\phi_{\nu}^{1}\left(2^{1-k} x\right)$ almost everywhere for $x \in \widetilde{H}^k_{ \varepsilon}$. Therefore
$\left|\phi_{\nu}^{k}(x)\right| \leq  C$ for $x \in \widetilde{H}^k_{ \varepsilon}$.

By the generalized H\"older inequality, we get

\begin{align*}
\left|P^{(i)}_{k}(x)\right| & \lesssim \frac{1}{\left|\widetilde{H}^k_{ \varepsilon}\right|} \int_{\widetilde{H}^k_{ \varepsilon}}\left|g^{(i)}(x) \Phi_{k}(x)\right| \mathrm{d} x \\
& \lesssim \frac{1}{\left|\widetilde{H}^k_{ \varepsilon}\right|}\left\|g^{(i)} \Phi_{k}\right\|_{{p(\cdot)}}\left\|\chi_{\widetilde{H}^k_{ \varepsilon}}\right\|_{{p^{\prime}(\cdot)}} .
\end{align*}

By Lemma 1.2, we have

\begin{align*}
\left\|g_{k}^{(i)}\right\|_{{p(\cdot)}} \lesssim & \left\|g^{(i)} \Phi_{k}\right\|_{{p(\cdot)}}+\left\|P^{(i)}_{k}\right\|_{{p(\cdot)}} \\
\lesssim & \left\|g^{(i)} \Phi_{k}\right\|_{{p(\cdot)}}+\frac{1}{\left|\widetilde{H}^k_{ \varepsilon}\right|}\left\|g^{(i)} \Phi_{k}\right\|_{{p(\cdot)}} \left\|\chi_{\widetilde{H}^k_{ \varepsilon}}\right\|_{{\left.p^{(} \cdot\right)}}\left\|\chi_{\widetilde{H}^k_{ \varepsilon}}\right\|_{{p(\cdot)}} \\
\lesssim & \left\|g^{(i)} \Phi_{k}\right\|_{{p(\cdot)}}+\left\|g^{(i)} \Phi_{k}\right\|_{{p(\cdot)}} \\
 \lesssim & \left\|\left(g * \phi_{(i)}\right) \Phi_{k}\right\|_{{p(\cdot)}} \\
\leq  & C \sum_{\ell=k-1}^{k+1}\left\|\left(G_{N} g\right) \chi_\ell \right\|_{{p(\cdot)}} .
\end{align*}

Choose $b=C$; then $\left\|a_{k}^{(i)}\right\|_{{p(\cdot)}} \leq \left|B_{k+1}\right|^{\frac{-\alpha_{k+1}}{n}}$ and each $a_{k}^{(i)}$ is a central $(\alpha(\cdot), p(\cdot))$-atom with support contained in $B_{k+1}$.  Thus,

\begin{align*}
& \sup_{\vartheta>0 } \sup _{m_0\in  \mathbb{Z}} 2^{-m_0\Gamma u(1+\vartheta)}\epsilon  ^\theta \sum_{k=-\infty}^{m_0}\left|\lambda_{k}\right|^{u(1+\vartheta)} \\
& \quad \leq  C \sup_{\vartheta>0 } \sup _{m_0\in  \mathbb{Z}} 2^{-m_0\Gamma u(1+\vartheta)}\epsilon  ^\theta \left( \sum_{k=-\infty}^{m_0}\left|B_{k+1}\right|^{ \frac{\alpha_{k+1}} {n}}\sum_{\ell=k-1}^{k+1}\left\|\left(G_{N}g \right) \chi_\ell \right\|_{{p(\cdot)}}\right)^{u(1+\vartheta)} \\
& \quad \leq  C \sup_{\vartheta>0 } \sup _{m_0\in  \mathbb{Z}} 2^{-m_0\Gamma u(1+\vartheta)}\epsilon  ^\theta \sum_{k=-\infty}^{m_0}\left|B_{k+1}\right|^{\frac{u(1+\vartheta) \alpha_{k+1}} { n}}\left\|\left(G_{N}g \right) \chi_{k}\right\|_{{p(\cdot)}}^{u(1+\vartheta)} \\
& \quad:=A . 
\end{align*}

Now we estimate $A$. By the condition of $\alpha(\cdot)$ we consider it in two cases.

Case $1$: When $(m_0<0)$. Consider

\begin{align*}
& \sup_{\vartheta>0 } \sup _{m_0\in  \mathbb{Z}, m_0\leq 0} 2^{-m_0\Gamma u(1+\vartheta)}\epsilon  ^\theta \sum_{k=-\infty}^{m_0}\left|B_{k+1}\right|^{\frac{u(1+\vartheta) \alpha_{k+1}} { n}}\left\|\left(G_{N}g \right) \chi_{k}\right\|_{{p(\cdot)}}^{u(1+\vartheta)} \\
& \quad \leq  C \sup_{\vartheta>0 } \sup _{m_0\in  \mathbb{Z}, m_0\leq 0} 2^{-m_0\Gamma u(1+\vartheta)}\epsilon  ^\theta \sum_{k=-\infty}^{m_0} 2^{k u(1+\vartheta) \alpha(0)}\left\|\left(G_{N}g \right) \chi_{k}\right\|_{{p(\cdot)}}^{u(1+\vartheta)}  \\
& \quad \leq C\left\|G_{N}g \right\|_{M \dot{K} ^{\alpha(\cdot),u),\theta}_{ \Gamma, p(\cdot)}(\mathbb{R}^n)}^{u(1+\vartheta)} .
\end{align*}

Case $1$: When $(m_0\geq 0)$. Consider

\begin{align*}
&\sup_{\vartheta>0 } \sup _{m_0\in  \mathbb{Z}} 2^{-m_0\Gamma u(1+\vartheta)}\epsilon  ^\theta \sum_{k=-\infty}^{m_0}\left|B_{k+1}\right|^{\frac{u(1+\vartheta) \alpha_{k+1}} { n}}\left\|\left(G_{N}g \right) \chi_{k}\right\|_{{p(\cdot)}}^{u(1+\vartheta)} \\
& \leq C \sup_{\vartheta>0 } \sup _{m_0\in  \mathbb{Z}} 2^{-m_0\Gamma u(1+\vartheta)}\epsilon  ^\theta \sum_{k=-\infty}^{0}\left|B_{k+1}\right|^{\frac{u(1+\vartheta) \alpha_{k+1}} { n}}\left\|\left(G_{N}g \right) \chi_{k}\right\|_{{p(\cdot)}}^{u(1+\vartheta)} \\
& +\sup_{\vartheta>0 } \sup _{m_0\in  \mathbb{Z}} 2^{-m_0\Gamma u(1+\vartheta)}\epsilon  ^\theta \sum_{k=-0}^{m_0}\left|B_{k+1}\right|^{\frac{u(1+\vartheta) \alpha_{k+1}} { n}}\left\|\left(G_{N}g \right) \chi_{k}\right\|_{{p(\cdot)}}^{u(1+\vartheta)} \\
& \leq C \sup_{\vartheta>0 } \sup _{m_0\in  \mathbb{Z}} 2^{-m_0\Gamma u(1+\vartheta)}\epsilon  ^\theta \sum_{k=-\infty}^{0}2^{k u(1+\vartheta) \alpha(0)}\left\|\left(G_{N}g \right) \chi_{k}\right\|_{{p(\cdot)}}^{u(1+\vartheta)} \\
& +C\sup_{\vartheta>0 } \sup _{m_0\in  \mathbb{Z}} 2^{-m_0\Gamma u(1+\vartheta)}\epsilon  ^\theta \sum_{k=-0}^{m_0}2^{k u(1+\vartheta) \alpha_\infty}\left\|\left(G_{N}g \right) \chi_{k}\right\|_{{p(\cdot)}}^{u(1+\vartheta)} \\
 & \leq C\left\|G_{N}g \right\|_{M \dot{K} ^{\alpha(\cdot),u),\theta}_{ \Gamma, p(\cdot)}(\mathbb{R}^n)}^{u(1+\vartheta)} .
\end{align*}

Hence,

\begin{equation*}
\sup_{\vartheta>0 } \sup _{m_0\in  \mathbb{Z}} 2^{-m_0\Gamma u(1+\vartheta)}\epsilon  ^\theta \sum_{k=-\infty}^{m_0}\left|\lambda_{k}\right|^{u(1+\vartheta)} \lesssim\left\|G_{N}g \right\|_{M \dot{K} ^{\alpha(\cdot),u),\theta}_{ \Gamma, p(\cdot)}(\mathbb{R}^n)}^{u(1+\vartheta)} .
\end{equation*}

Next we find the estimate for $I I_{(i)}$.

Let $\left\{\epsilon  _{d}^{k}:|\gamma| \leq  m\right\}$ be the dual basis of $\left\{x^{\beta}:|\beta| \leq  m\right\}$ with respect to the weight $\frac{1} {\left|\widetilde{H}^k_{ \varepsilon}\right|}$ on $\widetilde{H}^k_{ \varepsilon}$, that is,

\begin{equation*}
\left\langle\epsilon  _{\gamma}^{k}, x^{\beta}\right\rangle=\frac{1}{\left|\widetilde{H}^k_{ \varepsilon}\right|} \int_{\widetilde{H}^k_{ \varepsilon}} x^{\beta} \epsilon  _{\gamma}^{k}(x) \mathrm{d} x=\delta_{\beta \gamma} . 
\end{equation*}

By applying similar methods of \cite{48},
let

\begin{align*}
h_{k, \gamma}^{(i)}(x):=  \sum_{\ell=-\infty }^{k}\left(\frac{\epsilon  _{\gamma}^{k}(x) \chi_{\widetilde{H}^k_{ \varepsilon}}(x)}{\left|\widetilde{H}^k_{ \varepsilon}\right|}-\frac{\epsilon  _{\gamma}^{k+1}(x) \chi_{\widetilde{H}^{k+1}_{ \varepsilon}}(x)}{\left|\widetilde{H}^{k+1}_{ \varepsilon}\right|}\right)  \int_{\mathbb{R}^{n}} g^{(i)}(y) \Phi_\ell (y) y^{\gamma} \mathrm{d} y . 
\end{align*}

It is easy to check that

\begin{align*}
\int_{\mathbb{R}^{n}} \sum_{\ell=-\infty }^{k}\left|\Phi_\ell (x) x^{\gamma}\right| \mathrm{d} x & =\sum_{\ell=-\infty }^{k} \int_{\widetilde{H}^k_{ \varepsilon}}\left|\Phi_\ell (x) x^{\gamma}\right| \mathrm{d} x  \\
& \lesssim 2^{k(n+|\gamma|)} .
\end{align*}

Therefore, we get

\begin{align*}
I I_{(i)}= & \sum_{k=-\infty}^{\infty} \sum_{|\gamma| \leq  m}\left\langle g^{(i)} \Phi_{k}, x^{\gamma}\right\rangle \epsilon  _{\gamma}^{k}(x) \chi_{\widetilde{H}^k_{ \varepsilon}}(x) \\
= & \sum_{|\gamma| \leq  m} \sum_{k=-\infty}^{\infty}\left(\int_{\mathbb{R}^{n}} g^{(i)} \Phi_{k} x^{\gamma} \mathrm{d} x\right) \frac{\epsilon  _{\gamma}^{k}(x) \chi_{\widetilde{H}^k_{ \varepsilon}}(x)}{\left|\widetilde{H}^k_{ \varepsilon}\right|} \\
= & \sum_{|\gamma| \leq  m} \sum_{k=-\infty}^{\infty}\left(\sum_{\ell=-\infty }^{k} \int_{\mathbb{R}^{n}} g^{(i)}(x) \Phi_\ell (x) x^{\gamma} \mathrm{d} x\right)\left(\frac{\epsilon  _{\gamma}^{k}(x) \chi_{\widetilde{H}^k_{ \varepsilon}}(x)}{\left|\widetilde{H}^k_{ \varepsilon}\right|}-\frac{\epsilon  _{\gamma}^{k+1}(x) \chi_{\widetilde{H}^{k+1}_{ \varepsilon}}(x)}{\left|\widetilde{H}^{k+1}_{ \varepsilon}\right|}\right) \\
= & \sum_{|\gamma| \leq  m} \sum_{k=-\infty}^{\infty} \frac{\alpha_{k, \gamma} h_{k, \gamma}^{(i)}(x)}{\alpha_{k, \gamma}} \\
= & \sum_{|\gamma| \leq  m} \sum_{k=-\infty}^{\infty} \alpha_{k, \gamma} a_{k, \gamma}^{(i)}(x),
\end{align*}

where

\begin{equation*}
\alpha_{k, \gamma}:=\widetilde{b} \sum_{\ell=k-1}^{k+1}\left\|\left(G_{N}g \right) \chi_\ell \right\|_{{p(\cdot)}}\left|B_{k+2}\right|^{\alpha_{k+2} / n} 
\end{equation*}

and the constant $\widetilde{b}$ will be selected at a later time.

Let $x \in B_{k+2}$, then by a computation we have

\[
\begin{array}{r}
\left|\int_{\mathbb{R}^{n}} g^{(i)}(y) \sum_{\ell=-\infty }^{k} \Phi_\ell (y) y^{\gamma} \mathrm{d} y\right| \lesssim 2^{k(n+|\gamma|)} G_{N}g (x) .
\end{array}
\]

Since

\begin{equation*}
\left|\frac{\epsilon  _{\gamma}^{k}(x) \chi_{\widetilde{H}^k_{ \varepsilon}}(x)}{\left|\widetilde{H}^k_{ \varepsilon}\right|}-\frac{\epsilon  _{\gamma}^{k+1}(x) \chi_{\widetilde{H}^{k+1}_{ \varepsilon}}(x)}{\left|\widetilde{H}^{k+1}_{ \varepsilon}\right|}\right| \lesssim 2^{-k(n+|\gamma|)} \sum_{\ell=k-1}^{k+1} \chi_\ell (x) . 
\end{equation*}

It follows that

\begin{equation*}
\left\|h_{k, \gamma}^{(i)}\right\|_{{p(\cdot)}} \leq  C \sum_{\ell=k-1}^{k+1}\left\|\left(G_{N}g \right) \chi_\ell \right\|_{{p(\cdot)}} . 
\end{equation*}

Take $\widetilde{b}=C$. It is easy to show that each $a_{k, \gamma}^{(i)}$ is a central $(\alpha(\cdot)$, $p(\cdot))$-atom with support contained in $\widetilde{H}^k_{ \varepsilon} \cup \widetilde{H}^{k+1}_{ \varepsilon} \subset B_{k+2}$, and

\begin{equation*}
\alpha_{k, \gamma}=C \sum_{\ell=k-1}^{k+1}\left\|\left(G_{N}g \right) \chi_\ell \right\|_{{p(\cdot)}}\left|B_{k+2}\right|^{\frac{\alpha_{k+2}}{n}} 
\end{equation*}

where $C$ is a constant independent of $j, f, k$, and $\gamma$. Consequently, we get

\begin{align*}
& \sup_{\vartheta>0 } \sup _{m_0\in  \mathbb{Z}} 2^{-m_0\Gamma u(1+\vartheta)}\epsilon  ^\theta \sum_{k=-\infty}^{m_0} \sum_{|\gamma|\leq m}\left|\alpha_{k, \gamma}\right|^{u(1+\vartheta)} \\
& \quad C\leq \sup_{\vartheta>0 } \sup _{m_0\in  \mathbb{Z}} 2^{-m_0\Gamma u(1+\vartheta)}\epsilon  ^\theta\sum_{k=-\infty}^{m_0}\left|B_{k+2}\right|^{\frac{u(1+\vartheta) \alpha_{k+2}}  {n}}\left(\sum_{\ell=k-1}^{k+1}\left\|\left(G_{N}g \right) \chi_\ell \right\|_{{p(\cdot)}}\right)^{u(1+\vartheta)} \\
& \quad C\leq  \sup_{\vartheta>0 } \sup _{m_0\in  \mathbb{Z}} 2^{-m_0\Gamma u(1+\vartheta)}\epsilon  ^\theta \sum_{k=-\infty}^{m_0}\left|B_{k+2}\right|^{\frac{u(1+\vartheta) \alpha_{k+2}}  {n}}\left\|\left(G_{N}g \right) \chi_{k}\right\|_{{p(\cdot)}}^{u(1+\vartheta)} \\
& :=B . 
\end{align*}

Using the same argument as before for $A$, we obtain

\begin{equation*}
B \leq C\left\|G_{N}g \right\|_{M \dot{K} ^{\alpha(\cdot),u),\theta}_{ \Gamma, p(\cdot)}(\mathbb{R}^n)}^{u(1+\vartheta)} .
\end{equation*}

Therefore,

\begin{equation*}
\sup_{\vartheta>0 } \sup _{m_0\in  \mathbb{Z}} 2^{-m_0\Gamma u(1+\vartheta)}\epsilon  ^\theta \sum_{k=-\infty}^{m_0} \sum_{|\gamma|\leq m}\left|\alpha_{k, \gamma}\right|^{u(1+\vartheta)} \lesssim\left\|G_{N}g \right\|_{M \dot{K} ^{\alpha(\cdot),u),\theta}_{ \Gamma, p(\cdot)}(\mathbb{R}^n)}^{u(1+\vartheta)} . 
\end{equation*}

Thus, we obtain that

\begin{equation*}
g^{(i)}(x)=\sum_{d=-\infty}^{\infty} \lambda_{d} a_{d}^{(i)}(x),
\end{equation*}

where each $a_{d}^{(i)}$ is a central $(\alpha(\cdot), p(\cdot))$-atom with support contained in $\widetilde{H}^d_{ \varepsilon} \cup \widetilde{H}^{d+1}_{ \varepsilon} \subset B_{d+2}, \lambda_{d}$ is independent of $j$ and

\begin{equation*}
\sup_{\vartheta>0 }\sup _{m_0\in  \mathbb{Z}} 2^{-L \lambda}\left(\sum_{d=-\infty}^{m_0}\left|\lambda_{d}\right|^{q}\right)^{1 / q} \leq  C\left\|G_{N}g \right\|_{{M \dot{K} ^{\alpha(\cdot),u),\theta}_{ \Gamma, p(\cdot)}(\mathbb{R}^n)}}<\infty, 
\end{equation*}

where $C$ is independent of $j$ and $g$.

As we know that

\begin{equation*}
\sup _{i\in  \mathbb{N}_{0}}\left\|a_{0}^{(i)}\right\|_{{p(\cdot)}} \leq \left|B_{2}\right|^{\frac{-\alpha_{2}}{n}} . 
\end{equation*}

By using the Banach-Alaoglu theorem,  we have  a subsequence $\left\{a_{0}^{\left(i_{n_{0}}\right)}\right\}$ of $\left\{a_{0}^{(i)}\right\}$ converging in the weak$^*$ topology of $L^{p(\cdot)}$ to some $a_{0} \in L^{p(\cdot)}$. It is easy to verify that $a_{0}$ is a central $(\alpha(\cdot)$, $p(\cdot))$-atom supported on $B_{2}$. Next, since

\begin{equation*}
\sup _{i_{n_{0}} \in \mathbb{N}_{0}}\left\|a_{1}^{\left(i_{n_{0}}\right)}\right\|_{{p(\cdot)}} \leq \left|B_{3}\right|^{\frac{-\alpha_{3}} {n}} .
\end{equation*}

By the  application of the Banach-Alaoglu theorem yields a subsequence $\left\{a_{1}^{\left(i_{n_{1}}\right)}\right\}$ of $\left\{a_{1}^{\left(i_{n_{0}}\right)}\right\}$ which converges in the weak$^*$ topology in $L^{p(\cdot)}$ to a central $(\alpha(\cdot), p(\cdot))$-atom $a_{1}$ with support in $B_{3}$. Moreover,

\begin{equation*}
\sup _{i_{n_{1}} \in \mathbb{N}_{0}}\left\|a_{-1}^{\left(i_{n_{1}}\right)}\right\|_{{p(\cdot)}} \leq \left|B_{1}\right|^{\frac{-\alpha_{1}} { n}} .
\end{equation*}

Similarly, there exists a subsequence $\left\{a_{-1}^{\left(i_{n_{-1}}\right)}\right\}$ of $\left\{a_{-1}^{\left(i_{n_{1}}\right)}\right\}$ which converges weak$^*$ in $L^{p(\cdot)}$ to some $a_{-1} \in L^{p(\cdot)}$, and $a_{-1}$ is a central $(\alpha(\cdot), p(\cdot))$-atom supported on $B_{1}$. Repeating the above procedure for each $d \in \mathbb{Z}$, we can find a subsequence $\left\{a_{d}^{\left(i_{n_{d}}\right)}\right\}$ of $\left\{a_{d}^{(i)}\right\}$ converging weak$^*$ in $L^{p(\cdot)}$ to some $a_{d} \in L^{p{(\cdot)}}$ which is a central $(\alpha(\cdot), p(\cdot))$-atom supported on $B_{d+2}$. Consider a subsequence $\left\{i_{\nu}\right\}$ of $\mathbb{N}_{0}$ such that, for each $d \in \mathbb{Z}, \lim _{v \rightarrow \infty} a_{d}^{\left(i_{v}\right)}=a_{d}$ in the weak$^*$ topology of $L^{p(\cdot)}$ and therefore in $\mathcal{S}^{\prime}\left(\mathbb{R}^{n}\right)$.

Now we only need to prove that $g=\sum_{d=-\infty}^{\infty} \lambda_{d} a_{d}$ in the sense of $\mathcal{S}^{\prime}\left(\mathbb{R}^{n}\right)$. For each $\varphi \in \mathcal{S}\left(\mathbb{R}^{n}\right)$, note that supp $a_{d}^{\left(i_{v}\right)} \subset$ $\left(\widetilde{H}^d_{ \varepsilon} \cup \widetilde{H}^{d+1}_{ \varepsilon}\right) \subset\left(A_{d-1} \cup A_{d} \cup A_{d+1} \cup A_{d+2}\right)$. Using the same argument in \cite{48}, we have

\begin{equation*}
\langle g, \varphi\rangle=\lim _{v \rightarrow \infty} \sum_{d=-\infty}^{\infty} \lambda_{d} \int_{\mathbb{R}^{n}} a_{d}^{\left(i_{v}\right)}(x) \varphi(x) \mathrm{d} x . 
\end{equation*}

Recall that $m=\max \left\{\left[\alpha(0)-n \delta_{2}\right],\left[\alpha_{\infty}-n \delta_{2}\right]\right\}$. If $d \leq  0$, then by Lemmas 1.1 and 1.3 we have

\begin{align*}
& \left|\int_{\mathbb{R}^{n}} a_{d}^{\left(i_{v}\right)}(x) \varphi(x) \mathrm{d} x\right| \\
& =\left|\int_{\mathbb{R}^{n}} a_{d}^{\left(i_{\nu}\right)}(x)\left(\varphi(x)-\sum_{|\beta| \leq  m} \frac{D^{\beta} \varphi(0)}{\beta !} x^{\beta}\right) \mathrm{d} x\right| \\
& \leq C\int_{\mathbb{R}^{n}}\left|a_{d}^{\left(i_{v}\right)}(x)\right| \cdot|x|^{m+1} \mathrm{~d} x \\
& \leq C 2^{d(m+1)} \int_{\mathbb{R}^{n}}\left|a_{d}^{\left(i_{v}\right)}(x)\right| \mathrm{d} x \\
& \leq C 2^{d\left(m+1-\alpha_{d+2}\right)}\left\|\chi_{B_{d+2}}\right\|_{{p^{\prime}(\cdot)}} \\
& \leq C 2^{d\left(m+1-\alpha_{d+2}\right)}\left(\frac{\left|B_{d+2}\right|}{\left|B_{2}\right|}\right)^{\delta_{2}}\left\|\chi_{B_{2}}\right\|_{{p^{\prime}(\cdot)}} \\
& \leq C 2^{d\left(m+1-\alpha_{d+2}+n \delta_{2}\right)} \frac{\left|B_{2}\right|}{\left|B_{0}\right|}\left\|\chi_{B_{0}}\right\|_{{p^{\prime} (\cdot)}} \\
& \leq C 2^{d\left(m+1-\alpha_{d+2}+n \delta_{2}\right)} .
\end{align*}

If $d>0$, let $k_{0} \in \mathbb{N}_{0}$ such that $\min \left\{k_{0}+\alpha(0)-n, k_{0}+\alpha_{\infty}-\right.$ $n\}>0$; then again using Lemmas 1.1 and 1.3 we have

\begin{align*}
\left|\int_{\mathbb{R}^{n}} a_{d}^{\left(i_{v}\right)}(x) \varphi(x) \mathrm{d} x\right| & \lesssim \int_{\mathbb{R}^{n}}\left|a_{d}^{\left(i_{v}\right)}(x)\right||x|^{-k_{0}} \mathrm{~d} x \\
& \lesssim 2^{-d\left(k_{0}+\alpha_{d+2}\right)}\left\|\chi_{B_{d+2}}\right\|_{{p^{\prime}(\cdot)}} \\
& \lesssim 2^{-d\left(k_{0}+\alpha_{d+2}\right)} \frac{\left|B_{d+2}\right|}{\left|B_{0}\right|}\left\|\chi_{B_{0}}\right\|_{{p^{\prime}(\cdot)}} \\
& \lesssim 2^{-d\left(k_{0}+\alpha_{d+2}-n\right)} .
\end{align*}

Let

\[
\mu_{d}= \begin{cases}\left|\lambda_{d}\right| 2^{d\left(m+1-\alpha_{d+2}+n \delta_{2}\right)}, & d \leq  0  \\ \left|\lambda_{d}\right| 2^{-d\left(k_{0}+\alpha_{d+2}-n\right)}, & d>0 .\end{cases}
\]

Then

\begin{align*}
& \sup _{m_0\in  \mathbb{Z}} 2^{-L \lambda} \sum_{d=-\infty}^{m_0}\left|\mu_{d}\right| \lesssim \sup_{\vartheta>0 }\left(\sup _{m_0\in  \mathbb{Z}} 2^{-L \lambda u(1+\vartheta)}\epsilon  ^\theta \sum_{d=-\infty}^{m_0}\left|\lambda_{d}\right|^{u(1+\vartheta)}\right)^{\frac{1}{u(1+\vartheta)}} \\
& \leq C\left\|G_{N}g \right\|_{{M \dot{K} ^{\alpha(\cdot),u),\theta}_{ \Gamma, p(\cdot)}(\mathbb{R}^n)}}<\infty  \\
&\left|\lambda_{d}\right|\left|\int_{\mathbb{R}^{n}} a_{d}^{\left(i_{v}\right)}(x) \varphi(x) \mathrm{d} x\right| \lesssim\left|\mu_{d}\right|
\end{align*}

which implies that

\begin{align*}
\langle g, \varphi\rangle & =\sum_{d=-\infty}^{\infty} \lim _{\nu \rightarrow \infty} \lambda_{d} \int_{\mathbb{R}^{n}} a_{d}^{\left(i_{\nu}\right)}(x) \varphi(x) \mathrm{d} x  =\sum_{d=-\infty}^{\infty} \lambda_{d} \int_{\mathbb{R}^{n}} a_{d}(x) \varphi(x) \mathrm{d} x . 
\end{align*}

This creates the desired identity.\\
Next we prove that $2 \Rightarrow 1$. 
In order to demonstrate the sufficiency, we denote $$\sup_{\vartheta>0 }\sup \limits _{m_0\in  \mathbb{Z}} 2^{-m_0\Gamma u(1+\vartheta) } \epsilon  ^\theta \sum_{k=-\infty}^{m_0}\left|\lambda_{k}\right|^{u(1+\vartheta)}=\Lambda.$$ Firstly we have

\begin{align*}
& \left\|G_{N}g \right\|_{{M \dot{K} ^{\alpha(\cdot),u),\theta}_{ \Gamma, p(\cdot)}(\mathbb{R}^n)}}^{u(1+\vartheta)} \\
& \approx \max \left\{\sup_{\vartheta>0 }\sup _{m_0<0  , m_0\in  \mathbb{Z}} 2^{-m_0\Gamma u(1+\vartheta)} \epsilon  ^\theta \sum_{k=-\infty}^{m_0} 2^{  \alpha(0) k u(1+\vartheta)}\left\|\left(G_{N}g \right) \chi_{k}\right\|_{{p(\cdot)}}^{u(1+\vartheta)},\right. \\
&  \sup_{\vartheta>0 }\sup _{m_0\geq 0, m_0\in  \mathbb{Z}} 2^{-m_0\Gamma u(1+\vartheta)} \epsilon  ^\theta\left(\sum_{k=-\infty}^{-1} 2^{  \alpha(0) k u(1+\vartheta)}\left\|\left(G_{N}g \right) \chi_{k}\right\|_{{p(\cdot)}}^{u(1+\vartheta)}\right. \\
& \left.\left.+\sum_{k=0}^{m_0} 2^{\alpha_\infty k u(1+\vartheta)}\left\|\left(G_{N}g \right) \chi_{k}\right\|_{{p(\cdot)}}^{u(1+\vartheta)}\right)\right\}\\
&\leq  \max \{I, I I+I I I\}.
\end{align*}

Now we have

\begin{align*}
I= & \sup_{\vartheta>0 }\sup _{m_0<0  , m_0\in  \mathbb{Z}} 2^{-m_0\Gamma u(1+\vartheta)} \epsilon  ^\theta \sum_{k=-\infty}^{m_0} 2^{  \alpha(0) k u(1+\vartheta)}\left\|\left(G_{N}g \right) \chi_{k}\right\|_{{p(\cdot)}}^{u(1+\vartheta)}, \\
\leq & \sup_{\vartheta>0 }\sup _{m_0<0  , m_0\in  \mathbb{Z}} 2^{-m_0\Gamma u(1+\vartheta)} \epsilon  ^\theta \sum_{k=-\infty}^{m_0} 2^{  \alpha(0) k u(1+\vartheta)}\left(\sum_{\ell=k}^{\infty}\left|\lambda_\ell \right|\left\|a_\ell \right\|_{{p(\cdot)}}\right)^{u(1+\vartheta)} \\
 +&\sup_{\vartheta>0 }\sup _{m_0<0  , m_0\in  \mathbb{Z}} 2^{-m_0\Gamma u(1+\vartheta)} \epsilon  ^\theta \sum_{k=-\infty}^{m_0} 2^{  \alpha(0) k u(1+\vartheta)}\left(\sum_{\ell=-\infty }^{k-1}\left|\lambda_\ell \right|\left\|\left(G_{N} a_\ell \right) \chi_{k}\right\|_{{p(\cdot)}}\right)^{u(1+\vartheta)} \\
:= & I_{1}+I_{2} . \\
I I= &  \sup_{\vartheta>0 }\sup _{m_0\geq 0, m_0\in  \mathbb{Z}} 2^{-m_0\Gamma u(1+\vartheta)} \epsilon  ^\theta\sum_{k=-\infty}^{-1} 2^{  \alpha(0) k u(1+\vartheta)}\left\|\left(G_{N}g \right) \chi_{k}\right\|_{{p(\cdot)}}^{u(1+\vartheta)} \\
\leq & \sup_{\vartheta>0 }\sup _{m_0\geq 0, m_0\in  \mathbb{Z}} 2^{-m_0\Gamma u(1+\vartheta)} \epsilon  ^\theta\sum_{k=-\infty}^{-1} 2^{  \alpha(0) k u(1+\vartheta)}\left(\sum_{\ell=k}^{\infty}\left|\lambda_\ell \right|\left\|a_\ell \right\|_{{p(\cdot)}}\right)^{u(1+\vartheta)} \\
 +&\sup_{\vartheta>0 }\sup _{m_0\geq 0, m_0\in  \mathbb{Z}} 2^{-m_0\Gamma u(1+\vartheta)} \epsilon  ^\theta\sum_{k=-\infty}^{-1} 2^{  \alpha(0) k u(1+\vartheta)}\left(\sum_{\ell=-\infty }^{k-1}\left|\lambda_\ell \right|\left\|\left(G_{N} a_\ell \right) \chi_{k}\right\|_{{p(\cdot)}}\right)^{u(1+\vartheta)} \\
:= & I I_{1}+I I_{2} .\\
I I I= & \sup_{\vartheta>0 }\sup _{m_0\geq 0, m_0\in  \mathbb{Z}} 2^{-m_0\Gamma u(1+\vartheta)} \epsilon  ^\theta \sum_{k=0}^{m_0} 2^{k  \alpha_{\infty}u(1+\vartheta)}\left\|\left(G_{N}g \right) \chi_{k}\right\|_{{p(\cdot)}}^{u(1+\vartheta)} \\
\leq  & \sup_{\vartheta>0 }\sup _{m_0\geq 0, m_0\in  \mathbb{Z}} 2^{-m_0\Gamma u(1+\vartheta)} \epsilon  ^\theta \sum_{k=0}^{m_0} 2^{ \alpha_{\infty} k u(1+\vartheta)}\left(\sum_{\ell=k}^{m_0}\left|\lambda_\ell \right|\left\|a_\ell \right\|_{{p(\cdot)}}\right)^{u(1+\vartheta)}\\
+&\sup_{\vartheta>0 }\sup _{m_0\geq 0, m_0\in  \mathbb{Z}} 2^{-m_0\Gamma u(1+\vartheta)} \epsilon  ^\theta\sum_{k=0}^{m_0} 2^{k  \alpha_{\infty}u(1+\vartheta)}\left(\sum_{\ell=-\infty }^{k-1}\left|\lambda_\ell \right|\left\|\left(G_{N} a_\ell \right) \chi_{k}\right\|_{{p(\cdot)}}\right)^{u(1+\vartheta)} \\
:= & I I I_{1}+I I I_{2} .
\end{align*}

Next for $I, I I$, and $I I I$ we need a pointwise estimate for $G_{N} a_\ell (x)$ on $F_{k}$, where $k \geqslant \ell+2$. Let $\phi \in \mathcal{A}_{N}, m \in \mathbb{N}$ such that $\alpha_{k}-n \delta_{2}<m+1$. Denote by $P_{m}$ the $m$-th order Taylor series expansion of $\phi$ at $\frac{y}{t}$. If $|x-y|<t$, then $a_\ell$'s vanishing moment condition gives us

\begin{align*}
& \left|a_\ell  * \phi_{t}(y)\right| \\
& \quad=t^{-n}\left|\int_{\mathbb{R}^{n}} a_\ell (z)\left(\phi\left(\frac{y-z}{t}\right)-P_{m}\left(-\frac{z}{t}\right)\right) \mathrm{d} z\right| \\
& \quad \leq C t^{-n} \int_{\mathbb{R}^{n}}\left|a_\ell (z)\right|\left|\frac{z}{t}\right|^{m+1}\left(1+\left|\frac{(y-\theta z)}{t}\right|\right)^{-(n+m+1)} \mathrm{d} z \\
& \quad \leq C \int_{\mathbb{R}^{n}}\left|a_\ell (z)\right||z|^{m+1}(t+|y-\theta z|)^{-(n+m+1)} \mathrm{d} z,
\end{align*}

where $0<\theta<1$. Since $x \in D_{k}$ for $k \in \mathbb{Z}$, we have $|x| \geqslant 2^{k-1}$. From $|x-y|<t$ and $|z|<2^{\ell}$, we have

\begin{equation*}
t+|y-\theta z| \geqslant|x-y|+|y-\theta z| \geqslant|x|-|z| \geqslant \frac{|x|}{2}
\end{equation*}

Thus,

\begin{align*}
& \left|a_\ell  * \phi_{t}(y)\right| \\
& \quad \lesssim \int_{\mathbb{R}^{n}}\left|a_\ell (z)\right||z|^{m+1}(|x-y|+|y-\theta z|)^{-(n+m+1)} \mathrm{d} z \\
& \quad \lesssim 2^{\ell(m+1)}|x|^{-(n+m+1)} \int_{\mathbb{R}^{n}}|a(z)| \mathrm{d} z  \\
& \quad \lesssim 2^{\ell(m+1)} 2^{-k(n+m+1)}\left|B_\ell \right|^{-\alpha_\ell  / n}\left\|\chi_{B_\ell }\right\|_{{p^{\prime}(\cdot)}} .
\end{align*}

Therefore for $x \in F_{k}$ and $k\geq \ell+2$, we have

\[
\begin{array}{r}
G_{N} a_\ell (x) \lesssim 2^{\ell(m+1)} 2^{-k(n+m+1)}\left|B_\ell \right|^{\frac{-\alpha_\ell  } {n}}\left\|\chi_{B_\ell }\right\|_{{p^{\prime}(\cdot)}}.
\end{array}
\]

To continue, we divide them into two cased when $0<u(1+\vartheta) \leq  1$ and $1<u(1+\vartheta)<\infty$.

If $0<u(1+\vartheta) \leq  1$.
First we will find the estimate for the term $I_1$. 
\begin{align*}
I_{1}&=  \sup_{\vartheta>0 }\sup _{m_0<0  , m_0\in  \mathbb{Z}} 2^{-m_0\Gamma u(1+\vartheta)} \epsilon  ^\theta\sum_{k=-\infty}^{-1} 2^{  \alpha(0) k u(1+\vartheta)}\left(\sum_{\ell=k}^{\infty}\left|\lambda_\ell \right|\left\|a_\ell \right\|_{{p(\cdot)}}\right)^{u(1+\vartheta)}\\
& \lesssim \sup_{\vartheta>0 }\sup _{m_0<0  , m_0\in  \mathbb{Z}} 2^{-m_0\Gamma u(1+\vartheta)} \epsilon  ^\theta \sum_{k=-\infty}^{m_0} 2^{\alpha(0)k u(1+\vartheta)}\left(\sum_{\ell=k}^{\infty}\left|\lambda_\ell \right| 2^{-\alpha_\ell  \ell }\right)^{u(1+\vartheta)} \\
&\lesssim  \sup_{\vartheta>0 }\sup _{m_0\leq 0, m_0\in  \mathbb{Z}} 2^{-m_0\Gamma u(1+\vartheta)} \epsilon  ^\theta \sum_{k=-\infty}^{m_0} 2^{\alpha(0)k u(1+\vartheta)}  \\
& \times\left(\sum_{\ell=k}^{-1}\left|\lambda_\ell \right|^{u(1+\vartheta)} 2^{-\alpha(0) \ell u(1+\vartheta)}+\sum_{\ell=0}^{\infty}\left|\lambda_\ell \right|^{u(1+\vartheta)} 2^{-\alpha_{\infty} \ell u(1+\vartheta)}\right)\\
& \lesssim \sup_{\vartheta>0 }\sup _{m_0<0  , m_0\in  \mathbb{Z}} 2^{-m_0\Gamma u(1+\vartheta)} \epsilon  ^\theta \sum_{k=-\infty}^{m_0} \sum_{\ell=k}^{-1}\left|\lambda_\ell \right|^{u(1+\vartheta)} 2^{\alpha(0)(k-\ell) u(1+\vartheta)} \\
& + \sup_{\vartheta>0 }\sup _{m_0<0  , m_0\in  \mathbb{Z}} 2^{-m_0\Gamma u(1+\vartheta)} \epsilon  ^\theta \sum_{k=-\infty}^{m_0} 2^{\alpha(0) k  u(1+\vartheta)} \sum_{\ell=0}^{\infty}\left|\lambda_\ell \right|^{u(1+\vartheta)} 2^{-\alpha_{\infty} \ell u(1+\vartheta)} \\
&\lesssim \sup_{\vartheta>0 }\sup _{m_0<0  , m_0\in  \mathbb{Z}} 2^{-m_0\Gamma u(1+\vartheta)} \epsilon  ^\theta \sum_{\ell=-\infty }^{-1}\left|\lambda_\ell \right|^{u(1+\vartheta)} \sum_{k=-\infty}^{m_0} 2^{\alpha(0)(k-\ell) u(1+\vartheta)} \\
& +\sup_{\vartheta>0 }\sup _{m_0<0  , m_0\in  \mathbb{Z}}  \epsilon  ^\theta \sum_{\ell=0}^{\infty} 2^{-m_0\Gamma u(1+\vartheta)}\left|\lambda_\ell \right|^{u(1+\vartheta)} 2^{\left(\Gamma-\alpha_{\infty}\right) \ell u(1+\vartheta)} 2^{-m_0\Gamma u(1+\vartheta)} \sum_{k=-\infty}^{m_0} 2^{\alpha(0) k u(1+\vartheta)} \\
& \lesssim \left( \Lambda+  \sup_{\vartheta>0 }\sup _{m_0<0  , m_0\in  \mathbb{Z}} 2^{-m_0\Gamma u(1+\vartheta)} \epsilon  ^\theta \sum_{\ell=m_0}^{-1}\left|\lambda_\ell \right|^{u(1+\vartheta)} \sum_{k=-\infty}^{m_0} 2^{\alpha(0)(k-\ell) u(1+\vartheta)}\right.\\
&\left. +\Lambda \sup_{\vartheta>0 }\sup _{m_0<0  , m_0\in  \mathbb{Z}}  \epsilon  ^\theta  \sum_{\ell=0}^{\infty} 2^{\left(\Gamma-\alpha_{\infty}\right) \ell u(1+\vartheta)} \sum_{k=-\infty}^{m_0} 2^{(\alpha(0) k -m_0\Gamma)u(1+\vartheta) }\right) \\
&\lesssim\left( \Lambda+\sup_{\vartheta>0 }\sup _{m_0<0  , m_0\in  \mathbb{Z}}  \epsilon  ^\theta \sum_{\ell=m_0}^{-1} 2^{-m_0\Gamma u(1+\vartheta)}\left|\lambda_\ell \right|^{u(1+\vartheta)} 2^{(\ell-m_0) \Gamma u(1+\vartheta)} \sum_{k=-\infty}^{m_0} 2^{\alpha(0)(k-\ell) u(1+\vartheta)}+\Lambda \right)\\
&\lesssim\left(\Lambda+\Lambda \sup_{\vartheta>0 }\sup _{m_0<0  , m_0\in  \mathbb{Z}}  \epsilon  ^\theta  \sum_{\ell=m_0}^{-1} 2^{(\ell-m_0) \Gamma u(1+\vartheta)} \sum_{k=-\infty}^{m_0} 2^{\alpha(0)(k-\ell) u(1+\vartheta)}\right)\\
& \lesssim \Lambda.
\end{align*}

Now we will find the estimate for $I_2$. 

\begin{align*}
I_{2}&=  \sup_{\vartheta>0 }\sup _{m_0<0  , m_0\in  \mathbb{Z}} 2^{-m_0\Gamma u(1+\vartheta)} \epsilon  ^\theta \sum_{k=-\infty}^{m_0} 2^{  \alpha(0) k u(1+\vartheta)}\left(\sum_{\ell=-\infty }^{k-1}\left|\lambda_\ell \right|\left\|\left(G_{N} a_\ell \right) \chi_{k}\right\|_{{p(\cdot)}}\right)^{u(1+\vartheta)} \\
&\lesssim  \sup_{\vartheta>0 }\sup _{m_0<0  , m_0\in  \mathbb{Z}} 2^{-m_0\Gamma u(1+\vartheta)} \epsilon  ^\theta \sum_{k=-\infty}^{m_0} 2^{  \alpha(0) k u(1+\vartheta)}  \left(\sum_{\ell=-\infty }^{k-1}\left|\lambda_\ell \right| 2^{\ell(m+1)-k(n+m+1)}\left|B_\ell \right|^{\frac{-\alpha_\ell } {n}}\left\|\chi_{B_\ell }\right\|_{{p^{\prime}(\cdot)}}\left\|\chi_{B_{k}}\right\|_{{p(\cdot)}}\right)^{u(1+\vartheta)} \\
& \lesssim \sup_{\vartheta>0 }\sup _{m_0<0  , m_0\in  \mathbb{Z}} 2^{-m_0\Gamma u(1+\vartheta)} \epsilon  ^\theta \sum_{k=-\infty}^{m_0}\left(\sum_{\ell=-\infty }^{k-1}\left|\lambda_\ell \right| 2^{(\ell-k)\left(m+1+n \delta_{2}\right)-(\ell-k) \alpha(0)}\right)^{u(1+\vartheta)} \\
& \lesssim \sup_{\vartheta>0 }\sup _{m_0<0  , m_0\in  \mathbb{Z}} 2^{-m_0\Gamma u(1+\vartheta)} \epsilon  ^\theta \sum_{k=-\infty}^{m_0}\left(\sum_{\ell=-\infty }^{k-1}\left|\lambda_\ell \right| 2^{(\ell-k)\left(m+1+n \delta_{2}-\alpha(0)\right)}\right)^{u(1+\vartheta)} \\
& = \sup_{\vartheta>0 }\sup _{m_0<0  , m_0\in  \mathbb{Z}} 2^{-m_0\Gamma u(1+\vartheta)} \epsilon  ^\theta \sum_{\ell=-\infty }^{m_0-1} \sum_{k=\ell+1}^{\ell }\left|\lambda_\ell \right|^{u(1+\vartheta)} 2^{(\ell-k)\left(m+1+n \delta_{2}-\alpha(0)\right) u(1+\vartheta)} \\
& \lesssim \sup_{\vartheta>0 }\sup _{m_0<0  , m_0\in  \mathbb{Z}} 2^{-m_0\Gamma u(1+\vartheta)} \epsilon  ^\theta \sum_{\ell=-\infty }^{\ell }\left|\lambda_\ell \right|^{u(1+\vartheta)}\\
&\lesssim \Lambda .
\end{align*}

Then we turn to estimate $II_1$.

\begin{align*}
I I_{1}= & \sup_{\vartheta>0 }\sup _{m_0\geq 0, m_0\in  \mathbb{Z}} 2^{-m_0\Gamma u(1+\vartheta)} \epsilon  ^\theta\sum_{k=-\infty}^{-1} 2^{  \alpha(0) k u(1+\vartheta)}\left(\sum_{\ell=k}^{\infty}\left|\lambda_\ell \right|\left\|a_\ell \right\|_{{p(\cdot)}}\right)^{u(1+\vartheta)} \\
\lesssim &  \sup_{\vartheta>0 }\sup _{m_0\geq 0, m_0\in  \mathbb{Z}} 2^{-m_0\Gamma u(1+\vartheta)} \epsilon  ^\theta\sum_{k=-\infty}^{-1} 2^{  \alpha(0) k u(1+\vartheta)}\left(\sum_{\ell=k}^{\infty}\left|\lambda_\ell \right||B_l|^{\frac{-\alpha_l}{n}}\right)^{u(1+\vartheta)} \\
\lesssim &  \sup_{\vartheta>0 }\sup _{m_0\geq 0, m_0\in  \mathbb{Z}} 2^{-m_0\Gamma u(1+\vartheta)} \epsilon  ^\theta \sum_{k=-\infty}^{-1} 2^{\alpha(0) k u(1+\vartheta)}\left(\sum_{\ell=k}^{-1}\left|\lambda_\ell \right|^{u(1+\vartheta)} 2^{-\alpha(0) \ell u(1+\vartheta)}+\sum_{\ell=0}^{\infty}\left|\lambda_\ell \right|^{u(1+\vartheta)} 2^{-\alpha_{\infty} \ell u(1+\vartheta)}\right) \\
\lesssim  &   \sup_{\vartheta>0 }\sup _{m_0\geq 0, m_0\in  \mathbb{Z}}  \epsilon  ^\theta\sum_{k=-\infty}^{-1} \sum_{\ell=k}^{-1}\left|\lambda_\ell \right|^{u(1+\vartheta)} 2^{(\alpha(0)(k-\ell) -m_0\Gamma ) u(1+\vartheta)} \\
+&   \sup_{\vartheta>0 }\sup _{m_0\geq 0, m_0\in  \mathbb{Z}}  \epsilon  ^\theta\sum_{k=-\infty}^{-1} 2^{(\alpha(0) k-m_0\Gamma ) u(1+\vartheta)} \sum_{\ell=0}^{\infty}\left|\lambda_\ell \right|^{u(1+\vartheta)} 2^{-\alpha_{\infty} \ell u(1+\vartheta)} \\
\lesssim  &  \sup_{\vartheta>0 }\sup _{m_0\geq 0, m_0\in  \mathbb{Z}}   \epsilon  ^\theta 2^{-m_0\Gamma u(1+\vartheta)} \sum_{\ell=-\infty }^{-1}\left|\lambda_\ell \right|^{u(1+\vartheta)} \sum_{k=-\infty}^{m_0} 2^{(\alpha(0)(k-\ell) ) u(1+\vartheta)}  \\
\lesssim&   \sup_{\vartheta>0 }\sup _{m_0\geq 0, m_0\in  \mathbb{Z}}  \epsilon  ^\theta 2^{-m_0\Gamma u(1+\vartheta)} \sum_{\ell=0}^{\infty}\left|\lambda_\ell \right|^{u(1+\vartheta)} 2^{-\alpha_{\infty} \ell u(1+\vartheta)} \sum_{k=-\infty}^{-1} 2^{(\alpha(0) k) u(1+\vartheta)} \\
\lesssim &  \Lambda + C \Lambda \sup_{\vartheta>0 }\sup _{m_0\geq 0, m_0\in  \mathbb{Z}}  \epsilon  ^\theta 2^{-m_0\Gamma u(1+\vartheta)}\sum_{i=-\infty}^l \left|\lambda_{i}\right|^{u(1+\vartheta)} \sum_{\ell=0}^{\infty} 2^{-\alpha_{\infty} \ell u(1+\vartheta)} \sum_{k=-\infty}^{-1} 2^{(\alpha(0) k) u(1+\vartheta)} \\
\lesssim & \Lambda. 
\end{align*}

For the second term $II_2$, we have

\begin{align*}
I I_{2} & =\sup_{\vartheta>0 }\sup _{m_0\geq 0, m_0\in  \mathbb{Z}} 2^{-m_0\Gamma u(1+\vartheta)} \epsilon  ^\theta\sum_{k=-\infty}^{-1} 2^{  \alpha(0) k u(1+\vartheta)}\left(\sum_{\ell=-\infty }^{k-1}\left|\lambda_\ell \right|\left\|\left(G_{N} a_\ell \right) \chi_{k}\right\|_{{p(\cdot)}}\right)^{u(1+\vartheta)}  \\
& \lesssim \sup_{\vartheta>0 }\sup _{m_0\geq 0, m_0\in  \mathbb{Z}} 2^{-m_0\Gamma u(1+\vartheta)} \epsilon  ^\theta \sum_{k=-\infty}^{-1} 2^{k  \alpha(0) u(1+\vartheta)}\left(\sum_{\ell=-\infty }^{k-1}\left|\lambda_\ell \right| 2^{\ell(m+1)-k(n+m+1)}\left|B_\ell \right|^{-\alpha_\ell  / n}\left\|\chi_{B_\ell }\right\|_{{p^{\prime}(\cdot)}}\left\|\chi_{B_{k}}\right\|_{{p(\cdot)}}\right)^{u(1+\vartheta)} \\
& \lesssim \sup_{\vartheta>0 }\sup _{m_0\geq 0, m_0\in  \mathbb{Z}} 2^{-m_0\Gamma u(1+\vartheta)} \epsilon  ^\theta  \sum_{k=-\infty}^{-1}\left(\sum_{\ell=-\infty }^{k-1}\left|\lambda_\ell \right| 2^{(\ell-k)\left(m+1+n \delta_{2}\right)-(\ell-k) \alpha(0)}\right)^{u(1+\vartheta)} \\
& \lesssim \sup_{\vartheta>0 }\sup _{m_0\geq 0, m_0\in  \mathbb{Z}} 2^{-m_0\Gamma u(1+\vartheta)} \epsilon  ^\theta \sum_{k=-\infty}^{-1}\left(\sum_{\ell=-\infty }^{k-1}\left|\lambda_\ell \right| 2^{(\ell-k)\left(m+1+n \delta_{2}-\alpha(0)\right)}\right)^{u(1+\vartheta)} \\
& \lesssim \sup_{\vartheta>0 }\sup _{m_0\geq 0, m_0\in  \mathbb{Z}} 2^{-m_0\Gamma u(1+\vartheta)} \epsilon  ^\theta \sum_{k=-\infty}^{-1}\left(\sum_{\ell=-\infty }^{k-1}\left|\lambda_\ell \right|^{u(1+\vartheta)} 2^{(\ell-k)\left(m+1+n \delta_{2}-\alpha(0)\right) u(1+\vartheta)}\right) \\
& = \sup_{\vartheta>0 }\sup _{m_0\geq 0, m_0\in  \mathbb{Z}} 2^{-m_0\Gamma u(1+\vartheta)} \epsilon  ^\theta\sum_{\ell=-\infty }^{-2} \sum_{k=\ell+1}^{-1}\left|\lambda_\ell \right|^{u(1+\vartheta)} 2^{(\ell-k)\left(m+1+n \delta_{2}-\alpha(0)\right) u(1+\vartheta)} \\
& \lesssim \sup_{\vartheta>0 }\sup _{m_0\geq 0, m_0\in  \mathbb{Z}} 2^{-m_0\Gamma u(1+\vartheta)} \epsilon  ^\theta \sum_{\ell=-\infty }^{-1}\left|\lambda_\ell \right|^{u(1+\vartheta)}\\
&  \lesssim \Lambda .
\end{align*}

Thirdly, we will find the  estimate for $III$:

\begin{align*}
I I I_{1}&=\sup_{\vartheta>0 }\sup _{m_0\geq 0, m_0\in  \mathbb{Z}} 2^{-m_0\Gamma u(1+\vartheta)} \epsilon  ^\theta \sum_{k=0}^{m_0} 2^{ \alpha_{\infty} k u(1+\vartheta)}\left(\sum_{\ell=k}^{m_0}\left|\lambda_\ell \right|\left\|a_\ell \right\|_{{p(\cdot)}}\right)^{u(1+\vartheta)} \\
& \lesssim\sup_{\vartheta>0 }\sup _{m_0\geq 0, m_0\in  \mathbb{Z}} 2^{-m_0\Gamma u(1+\vartheta)} \epsilon  ^\theta \sum_{k=0}^{m_0} 2^{k  \alpha_{\infty}u(1+\vartheta)}\left|B_{k}\right|^{\alpha_{\infty} u(1+\vartheta) / n}\left(\sum_{\ell=k}^{\infty}\left|\lambda_\ell \right|\left|B_\ell \right|^{-\alpha_\ell  / n}\right)^{u(1+\vartheta)} \\
& \lesssim \sup_{\vartheta>0 }\sup _{m_0\geq 0, m_0\in  \mathbb{Z}} 2^{-m_0\Gamma u(1+\vartheta)} \epsilon  ^\theta  \sum_{k=0}^{m_0} \sum_{\ell=k}^{\infty}\left|B_{k}\right|^{\frac{\alpha_{\infty} u(1+\vartheta)} {n}}\left|\lambda_\ell \right|^{u(1+\vartheta)}\left|B_\ell \right|^{\frac{-\alpha_{\infty}u(1+\vartheta) }{ n}} \\
& \lesssim \sup_{\vartheta>0 }\sup _{m_0\geq 0, m_0\in  \mathbb{Z}} 2^{-m_0\Gamma u(1+\vartheta)} \epsilon  ^\theta \sum_{k=0}^{m_0} \sum_{\ell=k}^{\infty}\left|\lambda_\ell \right|^{u(1+\vartheta)} 2^{\alpha_{\infty}(k-\ell)  u(1+\vartheta)} \\
& =\sup_{\vartheta>0 }\sup _{m_0\geq 0, m_0\in  \mathbb{Z}} 2^{-m_0\Gamma u(1+\vartheta)} \epsilon  ^\theta \\
& \times\left[\sum_{\ell=0}^{\ell }\left|\lambda_\ell \right|^{u(1+\vartheta)} \sum_{k=0}^{m_0} 2^{(k-\ell) \alpha_{\infty} u(1+\vartheta)}+\sum_{\ell=m_0}^{\infty}\left|\lambda_\ell \right|^{u(1+\vartheta)} \sum_{k=0}^{m_0} 2^{(k-\ell) \alpha_{\infty} u(1+\vartheta)}\right] \\
& \lesssim\sup_{\vartheta>0 }\sup _{m_0\geq 0, m_0\in  \mathbb{Z}} 2^{-m_0\Gamma u(1+\vartheta)} \epsilon  ^\theta \sum_{\ell=0}^{\ell }\left|\lambda_\ell \right|^{u(1+\vartheta)} \\
& +\sup _{m_0>0, m_0\in  \mathbb{Z} } \sum_{\ell=m_0}^{\infty} 2^{\Gamma(\ell-m_0)u(1+\vartheta)} 2^{-m_0\Gamma u(1+\vartheta)} \sum_{i=-\infty}^{\ell }\left|\lambda_{i}\right|^{u(1+\vartheta)} \sum_{k=0}^{m_0} 2^{\frac{(k-\ell) \alpha_{\infty} u(1+\vartheta)} {2}} \\
& \lesssim \Lambda+\Lambda \sup _{m_0>0, m_0\in  \mathbb{Z}} \sum_{\ell=m_0}^{\infty} 2^{\Gamma(\ell-m_0)u(1+\vartheta)} 2^{\frac{(\ell-m_0) \alpha_{\infty} u(1+\vartheta)} {2}} \\
& \lesssim \Lambda+\Lambda \sup _{m_0>0, m_0\in  \mathbb{Z}} \sum_{\ell=m_0}^{\infty} 2^{\Gamma(\ell-m_0)u(1+\vartheta)} 2^{\frac{(\ell-m_0) \alpha_{\infty} u(1+\vartheta)} {2}} \\
& \lesssim  \Lambda . 
\end{align*}
\begin{align*}
     I I I_{2}&=\sup_{\vartheta>0 }\sup _{m_0\geq 0, m_0\in  \mathbb{Z}} 2^{-m_0\Gamma u(1+\vartheta)} \epsilon  ^\theta\sum_{k=0}^{m_0} 2^{k  \alpha_{\infty}u(1+\vartheta)}\left(\sum_{\ell=-\infty }^{k-1}\left|\lambda_\ell \right|\left\|\left(G_{N} a_\ell \right) \chi_{k}\right\|_{{p(\cdot)}}\right)^{u(1+\vartheta)} \\
&  \lesssim \sup_{\vartheta>0 }\sup _{m_0\geq 0, m_0\in  \mathbb{Z}} 2^{-m_0\Gamma u(1+\vartheta)} \epsilon  ^\theta \\
& \times \sum_{k=0}^{m_0} 2^{k  \alpha_{\infty}u(1+\vartheta)}\left(\sum_{\ell=-\infty }^{k-1}\left|\lambda_\ell \right| 2^{\ell(m+1)-k(n+m+1)}\left|B_\ell \right|^{\frac{-\alpha_\ell } {n}}\left\|\chi_{B_\ell }\right\|_{{p^{\prime}(\cdot)}}\left\|\chi_{B_{k}}\right\|_{{p(\cdot)}}\right)^{u(1+\vartheta)} \\
&  \lesssim \sup_{\vartheta>0 }\sup _{m_0\geq 0, m_0\in  \mathbb{Z}} 2^{-m_0\Gamma u(1+\vartheta)} \epsilon  ^\theta \sum_{k=0}^{m_0}\left(\sum_{\ell=-\infty }^{k-1}\left|\lambda_\ell \right| 2^{(\ell-k)\left(m+1+n \delta_{2}\right)-\ell \alpha_\ell + \alpha_{\infty} k }\right)^{u(1+\vartheta)} \\
&  \lesssim \sup_{\vartheta>0 }\sup _{m_0\geq 0, m_0\in  \mathbb{Z}} 2^{-m_0\Gamma u(1+\vartheta)} \epsilon  ^\theta \sum_{k=0}^{m_0}\left(\sum_{\ell=-\infty }^{-1}\left|\lambda_\ell \right| 2^{(\ell-k)\left(m+1+n \delta_{2}\right)-\ell \alpha(0)+ \alpha_{\infty} k }\right)^{u(1+\vartheta)} \\
&  + \sup_{\vartheta>0 }\sup _{m_0\geq 0, m_0\in  \mathbb{Z}} 2^{-m_0\Gamma u(1+\vartheta)} \epsilon  ^\theta \sum_{k=0}^{m_0}\left(\sum_{\ell=0}^{k-1}\left|\lambda_\ell \right| 2^{(\ell-k)\left(m+1+n \delta_{2}-\alpha_\infty\right) }\right)^{u(1+\vartheta)} \\
&\lesssim  \sup_{\vartheta>0 }\sup _{m_0\geq 0, m_0\in  \mathbb{Z}} 2^{-m_0\Gamma u(1+\vartheta)} \epsilon  ^\theta  \sum_{k=0}^{m_0} 2^{u(1+\vartheta) k\left[\alpha_{\infty}-\left(m+1+n \delta_{2}\right)\right]} \left(\sum_{\ell=-\infty }^{-1}\left|\lambda_\ell \right| 2^{\ell \left(m+1+n \delta_{2}-\alpha(0)\right)}\right)^{u(1+\vartheta)} \\
& + \sup_{\vartheta>0 }\sup _{m_0\geq 0, m_0\in  \mathbb{Z}} 2^{-m_0\Gamma u(1+\vartheta)} \epsilon  ^\theta \sum_{k=0}^{m_0} \sum_{\ell=0}^{k-1}\left|\lambda_\ell \right|^{u(1+\vartheta)} 2^{(\ell-k)\left(m+1+n \delta_{2}-\alpha_{\infty}\right) u(1+\vartheta)} \\
&\lesssim  \sup_{\vartheta>0 }\sup _{m_0\geq 0, m_0\in  \mathbb{Z}} 2^{-m_0\Gamma u(1+\vartheta)} \epsilon  ^\theta \sum_{\ell=-\infty }^{-1}\left|\lambda_\ell \right|^{u(1+\vartheta)} 2^{\ell \left(m+1+n \delta_{2}-\alpha(0)\right) u(1+\vartheta)} \\
& +\sup_{\vartheta>0 }\sup _{m_0\geq 0, m_0\in  \mathbb{Z}} 2^{-m_0\Gamma u(1+\vartheta)} \epsilon  ^\theta\sum_{\ell=0}^{m_0-1} \sum_{k=\ell+1}^{\ell }\left|\lambda_\ell \right|^{u(1+\vartheta)} 2^{(\ell-k)\left(m+1+n \delta_{2}-\alpha_{\infty}\right) u(1+\vartheta)} \\
&\lesssim   \sup_{\vartheta>0 }\sup _{m_0\geq 0, m_0\in  \mathbb{Z}} 2^{-m_0\Gamma u(1+\vartheta)} \epsilon  ^\theta \sum_{\ell=-\infty }^{-1}\left|\lambda_\ell \right|^{u(1+\vartheta)}+\sup_{\vartheta>0 }\sup _{m_0\geq 0, m_0\in  \mathbb{Z}} 2^{-m_0\Gamma u(1+\vartheta)} \epsilon  ^\theta\sum_{\ell=0}^{\ell }\left|\lambda_\ell \right|^{u(1+\vartheta)} \\
& \lesssim \Lambda .
\end{align*}

Now for the second case if $1<u(1+\vartheta)<\infty$. To find the estimate of $I_1$, we have
\begin{align*}
     I_{1}&= \sup_{\vartheta>0 }\sup _{m_0<0  , m_0\in  \mathbb{Z}} 2^{-m_0\Gamma u(1+\vartheta)} \epsilon  ^\theta\sum_{k=-\infty}^{-1} 2^{  \alpha(0) k u(1+\vartheta)}\left(\sum_{\ell=k}^{\infty}\left|\lambda_\ell \right|\left\|a_\ell \right\|_{{p(\cdot)}}\right)^{u(1+\vartheta)}\\
& \lesssim \sup_{\vartheta>0 }\sup _{m_0<0  , m_0\in  \mathbb{Z}} 2^{-m_0\Gamma u(1+\vartheta)} \epsilon  ^\theta \sum_{k=-\infty}^{m_0} 2^{  \alpha(0) k u(1+\vartheta)}\left(\sum_{\ell=k}^{\infty}\left|\lambda_\ell \right|\left|B_\ell \right|^{-\alpha_\ell  / n}\right)^{u(1+\vartheta)} \\
& \lesssim\sup_{\vartheta>0 }\sup _{m_0<0  , m_0\in  \mathbb{Z}} 2^{-m_0\Gamma u(1+\vartheta)} \epsilon  ^\theta \sum_{k=-\infty}^{m_0}\left(\sum_{\ell=k}^{-1}\left|\lambda_\ell \right| 2^{\alpha(0)(k-\ell)}\right)^{u(1+\vartheta)} \\
& +\sup_{\vartheta>0 }\sup _{m_0<0  , m_0\in  \mathbb{Z}} 2^{-m_0\Gamma u(1+\vartheta)} \epsilon  ^\theta \sum_{k=-\infty}^{m_0} 2^{\alpha(0) k u(1+\vartheta)}\left(\sum_{\ell=0}^{\infty}\left|\lambda_\ell \right| 2^{-\alpha_{\infty} \ell }\right)^{u(1+\vartheta)} \\
& \lesssim\sup_{\vartheta>0 }\sup _{m_0<0  , m_0\in  \mathbb{Z}} 2^{-m_0\Gamma u(1+\vartheta)} \epsilon  ^\theta \sum_{k=-\infty}^{m_0}\left(\sum_{\ell=k}^{-1}\left|\lambda_\ell \right|^{u(1+\vartheta)} 2^{\alpha(0)(k-\ell) u(1+\vartheta) / 2}\right) 
\left(\sum_{\ell=k}^{-1} 2^{\alpha(0)(k-\ell) (u(1+\vartheta))^{\prime} / 2}\right)^{u(1+\vartheta) / (u(1+\vartheta))^{\prime}} \\
& +\sup_{\vartheta>0 }\sup _{m_0<0  , m_0\in  \mathbb{Z}} 2^{-m_0\Gamma u(1+\vartheta)} \epsilon  ^\theta \sum_{k=-\infty}^{m_0} 2^{\alpha(0) k u(1+\vartheta) }\left(\sum_{\ell=0}^{\infty}\left|\lambda_\ell \right|^{u(1+\vartheta)} 2^{-\alpha_{\infty} \ell u(1+\vartheta) / 2}\right) 
\left(\sum_{\ell=0}^{\infty} 2^{-\alpha_{\infty} \ell  (u(1+\vartheta))^{\prime} / 2}\right)^{u(1+\vartheta) / (u(1+\vartheta))^{\prime}} \\
& \lesssim\sup_{\vartheta>0 }\sup _{m_0<0  , m_0\in  \mathbb{Z}} 2^{-m_0\Gamma u(1+\vartheta)} \epsilon  ^\theta \sum_{k=-\infty}^{m_0}\sum_{\ell=k}^{-1}\left|\lambda_\ell \right|^{u(1+\vartheta)} 2^{\alpha(0)(k-\ell) u(1+\vartheta) / 2} \\
& + \sup_{\vartheta>0 }\sup _{m_0<0  , m_0\in  \mathbb{Z}} 2^{-m_0\Gamma u(1+\vartheta)} \epsilon  ^\theta \sum_{k=-\infty}^{m_0} 2^{\alpha(0) k u(1+\vartheta) }\sum_{\ell=0}^{\infty}\left|\lambda_\ell \right|^{u(1+\vartheta)} 2^{-\alpha_{\infty} \ell u(1+\vartheta) / 2} \\
& \lesssim\sup_{\vartheta>0 }\sup _{m_0<0  , m_0\in  \mathbb{Z}} 2^{-m_0\Gamma u(1+\vartheta)} \epsilon  ^\theta \sum_{\ell=-\infty }^{-1}\left|\lambda_\ell \right|^{u(1+\vartheta)} \sum_{k=-\infty}^{m_0} 2^{\alpha(0)(k-\ell) u(1+\vartheta) / 2} \\
& +\sup_{\vartheta>0 }\sup _{m_0<0  , m_0\in  \mathbb{Z}}  \epsilon  ^\theta \sum_{\ell=0}^{\infty}2^{-m_0\Gamma u(1+\vartheta)}\left|\lambda_\ell \right|^{u(1+\vartheta)} 2^{\left(\Gamma-\alpha_{\infty} / 2\right) \ell u(1+\vartheta)} 2^{-m_0\Gamma u(1+\vartheta)} \sum_{k=-\infty}^{m_0} 2^{\alpha(0) k u(1+\vartheta)} \\
& \lesssim\sup_{\vartheta>0 }\sup _{m_0<0  , m_0\in  \mathbb{Z}} 2^{-m_0\Gamma u(1+\vartheta)} \epsilon  ^\theta \sum_{\ell=-\infty }^{-1}\left|\lambda_\ell \right|^{u(1+\vartheta)}
 +\sup_{\vartheta>0 }\sup _{m_0<0  , m_0\in  \mathbb{Z}} 2^{-m_0\Gamma u(1+\vartheta)} \epsilon  ^\theta \sum_{\ell=m_0}^{-1}\left|\lambda_\ell \right|^{u(1+\vartheta)} \sum_{k=-\infty}^{m_0} 2^{\alpha(0)(k-\ell) u(1+\vartheta) / 2} \\
& +\Lambda \sup_{\vartheta>0 } \sup _{m_0<0  , m_0\in  \mathbb{Z}}\epsilon  ^\theta \sum_{\ell=0}^{\infty} 2^{\left(\lambda-\alpha_{\infty} / 2\right) l  u(1+\vartheta)} \sum_{k=-\infty}^{m_0} 2^{(\alpha(0) k -m_0\Gamma)  u(1+\vartheta)} \\
& \lesssim \Lambda+ \sup_{\vartheta>0 } \sup _{m_0<0  , m_0\in  \mathbb{Z}}\epsilon  ^\theta \sum_{\ell=m_0}^{-1} 2^{-l \lambda  u(1+\vartheta)}\left|\lambda_\ell \right|^{ u(1+\vartheta)} 2^{(\ell-m_0) \lambda  u(1+\vartheta)}   \sum_{k=-\infty}^{m_0} 2^{\alpha(0)(k-\ell)  u(1+\vartheta) / 2}+\Lambda \\
& \lesssim \Lambda+ \Lambda \sup_{\vartheta>0 } \sup _{m_0<0  , m_0\in  \mathbb{Z}}\epsilon  ^\theta \sum_{\ell=m_0}^{-1}  2^{(\ell-m_0) \lambda  u(1+\vartheta)}   \sum_{k=-\infty}^{m_0} 2^{\alpha(0)(k-\ell)  u(1+\vartheta) / 2}+\Lambda \\
& \lesssim \Lambda .
\end{align*}

\begin{align*}
    I_{2}&=  \sup_{\vartheta>0 }\sup _{m_0<0  , m_0\in  \mathbb{Z}} 2^{-m_0\Gamma u(1+\vartheta)} \epsilon  ^\theta \sum_{k=-\infty}^{m_0} 2^{  \alpha(0) k u(1+\vartheta)}\left(\sum_{\ell=-\infty }^{k-1}\left|\lambda_\ell \right|\left\|\left(G_{N} a_\ell \right) \chi_{k}\right\|_{{p(\cdot)}}\right)^{u(1+\vartheta)} \\
    & \lesssim  \sup_{\vartheta>0 }\sup _{m_0<0  , m_0\in  \mathbb{Z}} 2^{-m_0\Gamma u(1+\vartheta)} \epsilon  ^\theta \sum_{k=-\infty}^{m_0} 2^{  \alpha(0) k u(1+\vartheta)} \left(\sum_{\ell=-\infty }^{k-1}\left|\lambda_\ell \right| 2^{\ell(m+1)-k(n+m+1)}\left|B_\ell \right|^{\frac{-\alpha_\ell }  {n}}\left\|\chi_{B_\ell }\right\|_{{p^{\prime}(\cdot)}}\left\|\chi_{B_{k}}\right\|_{{p(\cdot)}}\right)^{u(1+\vartheta)}\\
    & \lesssim  \sup_{\vartheta>0 }\sup _{m_0<0  , m_0\in  \mathbb{Z}} 2^{-m_0\Gamma u(1+\vartheta)} \epsilon  ^\theta \sum_{k=-\infty}^{m_0} 2^{  \alpha(0) k u(1+\vartheta)}  \left(\sum_{\ell=-\infty }^{k-1}\left|\lambda_\ell \right| 2^{(\ell-k)\left(m+1+n \delta_{2}\right)-(\ell-k) \alpha(0)}\right)^{u(1+\vartheta)}\\
     & \lesssim  \sup_{\vartheta>0 }\sup _{m_0<0  , m_0\in  \mathbb{Z}} 2^{-m_0\Gamma u(1+\vartheta)} \epsilon  ^\theta \sum_{k=-\infty}^{m_0}   \left(\sum_{\ell=-\infty }^{k-1}\left|\lambda_\ell \right| 2^{(\ell-k)\left(m+1+n \delta_{2}- \alpha(0)\right)}\right)^{u(1+\vartheta)}\\
     & \lesssim  \sup_{\vartheta>0 }\sup _{m_0<0  , m_0\in  \mathbb{Z}} 2^{-m_0\Gamma u(1+\vartheta)} \epsilon  ^\theta \sum_{k=-\infty}^{m_0}   \left(\sum_{\ell=-\infty }^{k-1}\left|\lambda_\ell \right|^{u(1+\vartheta)} 2^{(\ell-k)\left(m+1+n \delta_{2}- \alpha(0)\right)u(1+\vartheta)/2}\right)\left(\sum_{\ell=-\infty }^{k-1} 2^{(\ell-k)\left(m+1+n \delta_{2}- \alpha(0)\right)(u(1+\vartheta))^\prime/2}\right)^{u(1+\vartheta)/(u(1+\vartheta))^\prime}\\
         & \lesssim  \sup_{\vartheta>0 }\sup _{m_0<0  , m_0\in  \mathbb{Z}} 2^{-m_0\Gamma u(1+\vartheta)} \epsilon  ^\theta \sum_{k=-\infty}^{m_0}   \left(\sum_{\ell=-\infty }^{k-1}\left|\lambda_\ell \right|^{u(1+\vartheta)} 2^{(\ell-k)\left(m+1+n \delta_{2}- \alpha(0)\right)u(1+\vartheta)/2}\right)\\
           & \lesssim  \sup_{\vartheta>0 }\sup _{m_0<0  , m_0\in  \mathbb{Z}} 2^{-m_0\Gamma u(1+\vartheta)} \epsilon  ^\theta \sum_{\ell=-\infty }^{m_0-1} \left|\lambda_\ell \right|^{u(1+\vartheta)}  \sum_{k=\ell+1}^{\ell } 2^{(\ell-k)\left(m+1+n \delta_{2}- \alpha(0)\right)u(1+\vartheta)/2}\\
            & \lesssim  \sup_{\vartheta>0 }\sup _{m_0<0  , m_0\in  \mathbb{Z}} 2^{-m_0\Gamma u(1+\vartheta)} \epsilon  ^\theta \sum_{\ell=-\infty }^{m_0-1} \left|\lambda_\ell \right|^{u(1+\vartheta)}\\
            & \lesssim \Lambda .
\end{align*}

Secondly, we find  estimate of the $I I$. By using  the same argument, we obtain that

\begin{align*}
    I I_{1}=  & \sup_{\vartheta>0 }\sup _{m_0\geq 0, m_0\in  \mathbb{Z}} 2^{-m_0\Gamma u(1+\vartheta)} \epsilon  ^\theta\sum_{k=-\infty}^{-1} 2^{  \alpha(0) k u(1+\vartheta)}\left(\sum_{\ell=k}^{\infty}\left|\lambda_\ell \right|\left\|a_\ell \right\|_{{p(\cdot)}}\right)^{u(1+\vartheta)} \\
\lesssim & \sup_{\vartheta>0 }\sup _{m_0\geq 0, m_0\in  \mathbb{Z}} 2^{-m_0\Gamma u(1+\vartheta)} \epsilon  ^\theta\sum_{k=-\infty}^{-1} 2^{  \alpha(0) k u(1+\vartheta)}\left(\sum_{\ell=k}^{\infty}\left|\lambda_\ell \right|\left|B_\ell \right|^{\frac{-\alpha_l}{n}}\right)^{u(1+\vartheta)} \\
\lesssim & \sup_{\vartheta>0 }\sup _{m_0\geq 0, m_0\in  \mathbb{Z}} 2^{-m_0\Gamma u(1+\vartheta)} \epsilon  ^\theta \left[ \sum_{k=-\infty}^{-1}\left(\sum_{\ell=k}^{-1}\left|\lambda_\ell \right| 2^{\alpha(0)(k-\ell)}\right)^{u(1+\vartheta)}+\sum_{k=-\infty}^{-1} 2^{\alpha(0) k u(1+\vartheta)}\left(\sum_{\ell=0}^{\infty}\left|\lambda_\ell \right|2^{-\alpha_{\infty} \ell }\right)^{u(1+\vartheta)}\right] \\
\lesssim & \sup_{\vartheta>0 }\sup _{m_0\geq 0, m_0\in  \mathbb{Z}} 2^{-m_0\Gamma u(1+\vartheta)} \epsilon  ^\theta \left[ \sum_{k=-\infty}^{-1}\left(\sum_{\ell=k}^{-1}\left|\lambda_\ell \right|^{u(1+\vartheta)} 2^{\alpha(0)(k-\ell)(u(1+\vartheta)/2)}\right)\left(\sum_{\ell=k}^{-1} 2^{\alpha(0)(k-\ell)(u(1+\vartheta)^\prime/2)}\right)^{u(1+\vartheta)/(u(1+\vartheta))^\prime}\right.\\
& \left.+\sum_{k=-\infty}^{-1} 2^{\alpha(0) k u(1+\vartheta)}\left(\sum_{\ell=0}^{\infty}\left|\lambda_\ell \right|^{u(1+\vartheta)}2^{-\alpha_{\infty} \ell (u(1+\vartheta))/2}\right)\left(\sum_{\ell=0}^{\infty}2^{-\alpha_{\infty} \ell (u(1+\vartheta))^\prime/2}\right)^{u(1+\vartheta)/(u(1+\vartheta))^\prime}\right] \\
\lesssim & \sup_{\vartheta>0 }\sup _{m_0\geq 0, m_0\in  \mathbb{Z}} 2^{-m_0\Gamma u(1+\vartheta)} \epsilon  ^\theta \left[ \sum_{k=-\infty}^{-1}\left(\sum_{\ell=k}^{-1}\left|\lambda_\ell \right|^{u(1+\vartheta)} 2^{\alpha(0)(k-\ell)(u(1+\vartheta)/2)}\right)+\sum_{k=-\infty}^{-1} 2^{\alpha(0) ku(1+\vartheta)}\left(\sum_{\ell=0}^{\infty}\left|\lambda_\ell \right|^{u(1+\vartheta)}2^{-\alpha_{\infty} \ell (u(1+\vartheta))/2}\right)\right] \\
\lesssim & \sup_{\vartheta>0 }\sup _{m_0\geq 0, m_0\in  \mathbb{Z}} 2^{-m_0\Gamma u(1+\vartheta)} \epsilon  ^\theta \left[ \sum_{k=-\infty}^{-1} \left|\lambda_\ell \right|^{u(1+\vartheta)}\left(\sum_{k=-\infty}^{-1} 2^{\alpha(0)(k-\ell)(u(1+\vartheta)/2)}\right)+\sum_{\ell=0}^{\infty}\left|\lambda_\ell \right|^{u(1+\vartheta)}2^{-\alpha_{\infty} \ell (u(1+\vartheta))/2}\sum_{k=-\infty}^{-1} 2^{\alpha(0) ku(1+\vartheta)}\right] \\
\lesssim & \sup_{\vartheta>0 }\sup _{m_0\geq 0, m_0\in  \mathbb{Z}} 2^{-m_0\Gamma u(1+\vartheta)} \epsilon  ^\theta \left[ \sum_{k=-\infty}^{-1} \left|\lambda_\ell \right|^{u(1+\vartheta)}+\sum_{\ell=0}^{\infty}2^{(\Gamma-\alpha_\infty/2)lu(1+\vartheta)}2^{-m_0\Gamma  u(1+\vartheta)}\sum_{i=-\infty}^l \left|\lambda_{i}\right|^{u(1+\vartheta)}\sum_{k=-\infty}^{-1} 2^{\alpha(0) ku(1+\vartheta)}\right] \\
\lesssim & \Lambda + \Lambda \sum_{\ell=0}^{\infty}2^{(\Gamma-\alpha_\infty/2)lu(1+\vartheta)}\sum_{k=-\infty}^{-1} 2^{\alpha(0) ku(1+\vartheta)}\\
\lesssim & \Lambda .
\end{align*}

\begin{align*}
I I_{2} & =\sup_{\vartheta>0 }\sup _{m_0\geq 0, m_0\in  \mathbb{Z}} 2^{-m_0\Gamma u(1+\vartheta)} \epsilon  ^\theta\sum_{k=-\infty}^{-1} 2^{  \alpha(0) k u(1+\vartheta)}\left(\sum_{\ell=-\infty }^{k-1}\left|\lambda_\ell \right|\left\|\left(G_{N} a_\ell \right) \chi_{k}\right\|_{{p(\cdot)}}\right)^{u(1+\vartheta)}  \\
& \lesssim \sup_{\vartheta>0 }\sup _{m_0\geq 0, m_0\in  \mathbb{Z}} 2^{-m_0\Gamma u(1+\vartheta)} \epsilon  ^\theta \sum_{k=-\infty}^{-1} 2^{k q \alpha(0)}\left(\sum_{\ell=-\infty }^{k-1}\left|\lambda_\ell \right| 2^{\ell(m+1)-k(n+m+1)}\left|B_\ell \right|^{\frac{-\alpha_\ell } {n}}\left\|\chi_{B_\ell }\right\|_{{p^{\prime}(\cdot)}}\left\|\chi_{B_{k}}\right\|_{{p (\cdot)}}\right)^{u(1+\vartheta)} \\
& \lesssim \sup_{\vartheta>0 }\sup _{m_0\geq 0, m_0\in  \mathbb{Z}} 2^{-m_0\Gamma u(1+\vartheta)} \epsilon  ^\theta \sum_{k=-\infty}^{-1}\left(\sum_{\ell=-\infty }^{k-1}\left|\lambda_\ell \right| 2^{(\ell-k)\left(m+1+n \delta_{2}\right)-(\ell-k) \alpha(0)}\right)^{u(1+\vartheta)} \\
& \lesssim \sup_{\vartheta>0 }\sup _{m_0\geq 0, m_0\in  \mathbb{Z}} 2^{-m_0\Gamma u(1+\vartheta)} \epsilon  ^\theta \sum_{k=-\infty}^{-1}\left(\sum_{\ell=-\infty }^{k-1}\left|\lambda_\ell \right| 2^{(\ell-k)\left(m+1+n \delta_{2}- \alpha(0)\right)}\right)^{u(1+\vartheta)} \\
& \lesssim \sup_{\vartheta>0 }\sup _{m_0\geq 0, m_0\in  \mathbb{Z}} 2^{-m_0\Gamma u(1+\vartheta)} \epsilon  ^\theta \sum_{k=-\infty}^{-1}\left(\sum_{\ell=-\infty }^{k-1}\left|\lambda_\ell \right|^{u(1+\vartheta)} 2^{(\ell-k)\left(m+1+n \delta_{2}- \alpha(0)\right)(u(1+\vartheta))/2}\right) \left(\sum_{\ell=-\infty }^{k-1} 2^{(\ell-k)\left(m+1+n \delta_{2}- \alpha(0)\right)(u(1+\vartheta))^\prime/2}\right)^{{u(1+\vartheta)}/{(u(1+\vartheta))^\prime}} \\
& \lesssim \sup_{\vartheta>0 }\sup _{m_0\geq 0, m_0\in  \mathbb{Z}} 2^{-m_0\Gamma u(1+\vartheta)} \epsilon  ^\theta \sum_{k=-\infty}^{-1}\left(\sum_{\ell=-\infty }^{k-1}\left|\lambda_\ell \right|^{u(1+\vartheta)} 2^{(\ell-k)\left(m+1+n \delta_{2}- \alpha(0)\right)(u(1+\vartheta))/2}\right)  \\
& \lesssim \sup_{\vartheta>0 }\sup _{m_0\geq 0, m_0\in  \mathbb{Z}} 2^{-m_0\Gamma u(1+\vartheta)} \epsilon  ^\theta \sum_{\ell=-\infty }^{-1}\sum_{k=\ell+1}^{-1}\left|\lambda_\ell \right|^{u(1+\vartheta)} 2^{(\ell-k)\left(m+1+n \delta_{2}- \alpha(0)\right)(u(1+\vartheta))/2}  \\
& \lesssim \Lambda .
\end{align*}

Next we find the  estimate of $ III$. We have

\begin{align*}
    I I I_{1}&=\sup_{\vartheta>0 }\sup _{m_0\geq 0, m_0\in  \mathbb{Z}} 2^{-m_0\Gamma u(1+\vartheta)} \epsilon  ^\theta \sum_{k=0}^{m_0} 2^{ \alpha_{\infty} k u(1+\vartheta)}\left(\sum_{\ell=k}^{m_0}\left|\lambda_\ell \right|\left\|a_\ell \right\|_{{p(\cdot)}}\right)^{u(1+\vartheta)} \\
& \lesssim \sup_{\vartheta>0 }\sup _{m_0\geq 0, m_0\in  \mathbb{Z}} 2^{-m_0\Gamma u(1+\vartheta)} \epsilon  ^\theta \sum_{k=0}^{m_0} 2^{k  \alpha_{\infty}u(1+\vartheta)}\left|B_{k}\right|^{\alpha_{\infty} u(1+\vartheta) / n}\left(\sum_{\ell=k}^{\infty}\left|\lambda_\ell \right|\left|B_\ell \right|^{-\alpha_\ell  / n}\right)^{u(1+\vartheta)} \\
& \lesssim \sup_{\vartheta>0 }\sup _{m_0\geq 0, m_0\in  \mathbb{Z}} 2^{-m_0\Gamma u(1+\vartheta)} \epsilon  ^\theta \sum_{k=0}^{m_0} 2^{k  \alpha_{\infty} u(1+\vartheta)}\left|B_{k}\right|^{\alpha_{\infty}  u(1+\vartheta) / n}\left(\sum_{\ell=k}^{\infty}\left|\lambda_\ell \right|^{ u(1+\vartheta)}\left|B_\ell \right|^{-\alpha_\ell   u(1+\vartheta) / 2n}\right) \left(\sum_{\ell=k}^{\infty}\left|B_\ell \right|^{-\alpha_\ell  ( u(1+\vartheta))^{\prime} / 2n}\right)^{  u(1+\vartheta)/  (u(1+\vartheta))^{\prime}} \\
& \lesssim \sup_{\vartheta>0 }\sup _{m_0\geq 0, m_0\in  \mathbb{Z}} 2^{-m_0\Gamma u(1+\vartheta)} \epsilon  ^\theta \sum_{k=0}^{m_0} 2^{k  \alpha_{\infty} u(1+\vartheta)}\left(\sum_{\ell=k}^{\infty}\left|\lambda_\ell \right|^{ u(1+\vartheta)}\left|B_\ell \right|^{-\alpha_\ell   u(1+\vartheta) / 2n}\left|B_{k}\right|^{\alpha_{\infty}  u(1+\vartheta) / 2n}\right) \\
& \times \left(\sum_{\ell=k}^{\infty}\left|B_\ell \right|^{-\alpha_\ell  ( u(1+\vartheta))^{\prime} / 2n}\left|B_{k}\right|^{\alpha_{\infty}  (u(1+\vartheta))^\prime / 2n}\right)^{  u(1+\vartheta)/  (u(1+\vartheta))^{\prime}} \\
& \lesssim \sup_{\vartheta>0 }\sup _{m_0\geq 0, m_0\in  \mathbb{Z}} 2^{-m_0\Gamma u(1+\vartheta)} \epsilon  ^\theta \sum_{k=0}^{m_0} \sum_{\ell=k}^{\infty}\left|\lambda_\ell \right|^{ u(1+\vartheta)}\left|B_\ell \right|^{-\alpha_\ell   u(1+\vartheta) / 2n}\left|B_{k}\right|^{\alpha_{\infty}  u(1+\vartheta) / 2n} \\
& \lesssim \sup_{\vartheta>0 }\sup _{m_0\geq 0, m_0\in  \mathbb{Z}} 2^{-m_0\Gamma u(1+\vartheta)} \epsilon  ^\theta \sum_{k=0}^{m_0} \sum_{\ell=k}^{\infty}\left|\lambda_\ell \right|^{ u(1+\vartheta)}2^{(k-\ell)\alpha_\infty u(1+\vartheta)/2}\\
& \lesssim \sup_{\vartheta>0 }\sup _{m_0\geq 0, m_0\in  \mathbb{Z}} 2^{-m_0\Gamma u(1+\vartheta)} \epsilon  ^\theta\left[\sum_{\ell=0}^{\ell }\left|\lambda_\ell \right|^{u(1+\vartheta)} \sum_{k=0}^{m_0} 2^{(k-\ell) \alpha_{\infty} u(1+\vartheta) / 2}+\sum_{\ell=m_0}^{\infty}\left|\lambda_\ell \right|^{u(1+\vartheta)} \sum_{k=0}^{m_0} 2^{(k-\ell) \alpha_{\infty} u(1+\vartheta) / 2}\right] \\
& \lesssim \sup_{\vartheta>0 }\sup _{m_0\geq 0, m_0\in  \mathbb{Z}} 2^{-m_0\Gamma u(1+\vartheta)} \epsilon  ^\theta \sum_{\ell=0}^{\ell }\left|\lambda_\ell \right|^{u(1+\vartheta)}  +\sup_{\vartheta>0 }\sup _{m_0\geq 0, m_0\in  \mathbb{Z}}  \epsilon  ^\theta \sum_{\ell=m_0}^{\infty} 2^{\Gamma(l -L) u(1+\vartheta)} 2^{-m_0\Gamma u(1+\vartheta)} \sum_{\ell=-\infty }^{\ell }\left|\lambda_\ell \right|^{u(1+\vartheta)} \sum_{k=0}^{m_0} 2^{(k-\ell) \alpha_{\infty} u(1+\vartheta) / 2} \\
& \lesssim \Lambda+\Lambda \sup_{\vartheta>0 }\sup _{m_0\geq 0, m_0\in  \mathbb{Z}}  \epsilon  ^\theta \sum_{\ell=m_0}^{\infty} 2^{\Gamma(\ell-m_0) u(1+\vartheta)} 2^{(\ell-m_0) \alpha_{\infty} u(1+\vartheta) / 2} \\
& \lesssim \Lambda .
\end{align*}

\begin{align*}
      I I I_{2}&=\sup_{\vartheta>0 }\sup _{m_0\geq 0, m_0\in  \mathbb{Z}} 2^{-m_0\Gamma u(1+\vartheta)} \epsilon  ^\theta\sum_{k=0}^{m_0} 2^{k  \alpha_{\infty}u(1+\vartheta)}\left(\sum_{\ell=-\infty }^{k-1}\left|\lambda_\ell \right|\left\|\left(G_{N} a_\ell \right) \chi_{k}\right\|_{{p(\cdot)}}\right)^{u(1+\vartheta)} \\
      & \lesssim \sup_{\vartheta>0 }\sup _{m_0\geq 0, m_0\in  \mathbb{Z}} 2^{-m_0\Gamma u(1+\vartheta)} \epsilon  ^\theta\sum_{k=0}^{m_0} 2^{k  \alpha_{\infty}u(1+\vartheta)} \left(\sum_{\ell=-\infty }^{k-1}\left|\lambda_\ell \right| 2^{\ell(m+1)-k(n+m+1)}\left|B_\ell \right|^{-\alpha_\ell  / n}\left\|\chi_{B_\ell }\right\|_{{p^{\prime}(\cdot)}}\left\|\chi_{B_{k}}\right\|_{{p(\cdot)}}\right)^{u(1+\vartheta)} \\
       & \lesssim \sup_{\vartheta>0 }\sup _{m_0\geq 0, m_0\in  \mathbb{Z}} 2^{-m_0\Gamma u(1+\vartheta)} \epsilon  ^\theta  \sum_{k=0}^{m_0}\left(\sum_{\ell=-\infty }^{k-1}\left|\lambda_\ell \right| 2^{(\ell-k)\left(m+1+n \delta_{2}\right)-\ell \alpha_\ell + \alpha_{\infty} k }\right)^{u(1+\vartheta)} \\
 & \lesssim \sup_{\vartheta>0 }\sup _{m_0\geq 0, m_0\in  \mathbb{Z}} 2^{-m_0\Gamma u(1+\vartheta)} \epsilon  ^\theta  \sum_{k=0}^{m_0}\left(\sum_{\ell=-\infty }^{-1}\left|\lambda_\ell \right| 2^{(\ell-k)\left(m+1+n \delta_{2}\right)-\ell \alpha(0)+ \alpha_{\infty} k }\right)^{u(1+\vartheta)} \\
 & + \sup_{\vartheta>0 }\sup _{m_0\geq 0, m_0\in  \mathbb{Z}} 2^{-m_0\Gamma u(1+\vartheta)} \epsilon  ^\theta  \sum_{k=0}^{m_0}\left(\sum_{\ell=0}^{k-1}\left|\lambda_\ell \right| 2^{(\ell-k)\left(m+1+n \delta_{2}  -\alpha_{\infty}\right)}\right)^{u(1+\vartheta)} \\ 
 & \lesssim \sup_{\vartheta>0 }\sup _{m_0\geq 0, m_0\in  \mathbb{Z}} 2^{-m_0\Gamma u(1+\vartheta)} \epsilon  ^\theta  \sum_{k=0}^{m_0}2^{ku(1+\vartheta)\left[\alpha_\infty-\left(m+1+n \delta_{2}\right)\right]}\left(\sum_{\ell=-\infty }^{-1}|\lambda_l|2^{\ell \left[\left(m+1+n \delta_{2}-\alpha(0)\right)\right]} \right)^{u(1+\vartheta)}\\
 & + \sup_{\vartheta>0 }\sup _{m_0\geq 0, m_0\in  \mathbb{Z}} 2^{-m_0\Gamma u(1+\vartheta)} \epsilon  ^\theta \sum_{k=0}^{m_0}\left(\sum_{\ell=0}^{k-1}\left|\lambda_\ell \right|^{u(1+\vartheta)} 2^{(\ell-k)\left(m+1+n \delta_{2}-\alpha_{\infty}\right) u(1+\vartheta) / 2}\right) \left(\sum_{\ell=0}^{k-1} 2^{(\ell-k)\left(m+1+n \delta_{2}-\alpha_{\infty}\right) (u(1+\vartheta))^{\prime} / 2}\right)^{u(1+\vartheta) / (u(1+\vartheta))^{\prime}} \\
 & \lesssim \sup_{\vartheta>0 }\sup _{m_0\geq 0, m_0\in  \mathbb{Z}} 2^{-m_0\Gamma u(1+\vartheta)} \epsilon  ^\theta\left(\sum_{\ell=-\infty }^{-1}\left|\lambda_\ell \right|^{u(1+\vartheta)} 2^{\ell \left(m+1+n \delta_{2}-\alpha(0)\right) u(1+\vartheta) / 2}\right) \left(\sum_{\ell=-\infty }^{-1} 2^{\ell \left(m+1+n \delta_{2}-\alpha(0)\right) (u(1+\vartheta))^{\prime} / 2}\right)^{u(1+\vartheta) / (u(1+\vartheta))^{\prime}} \\
 & +\sup_{\vartheta>0 }\sup _{m_0\geq 0, m_0\in  \mathbb{Z}} 2^{-m_0\Gamma u(1+\vartheta)} \epsilon  ^\theta \sum_{k=0}^{m_0} \sum_{\ell=0}^{k-1}\left|\lambda_\ell \right|^{u(1+\vartheta)} 2^{(\ell-k)\left(m+1+n \delta_{2}-\alpha_{\infty}\right) u(1+\vartheta) / 2} \\
  & \lesssim \sup_{\vartheta>0 }\sup _{m_0\geq 0, m_0\in  \mathbb{Z}} 2^{-m_0\Gamma u(1+\vartheta)} \epsilon  ^\theta\left(\sum_{\ell=-\infty }^{-1}\left|\lambda_\ell \right|^{u(1+\vartheta)} 2^{\ell \left(m+1+n \delta_{2}-\alpha(0)\right) u(1+\vartheta) / 2}\right)\\
  & +\sup_{\vartheta>0 }\sup _{m_0\geq 0, m_0\in  \mathbb{Z}} 2^{-m_0\Gamma u(1+\vartheta)} \epsilon  ^\theta \sum_{k=0}^{m_0} \sum_{\ell=0}^{k-1}\left|\lambda_\ell \right|^{u(1+\vartheta)} 2^{(\ell-k)\left(m+1+n \delta_{2}-\alpha_{\infty}\right) u(1+\vartheta) / 2} \\
    & \lesssim \sup_{\vartheta>0 }\sup _{m_0\geq 0, m_0\in  \mathbb{Z}} 2^{-m_0\Gamma u(1+\vartheta)} \epsilon  ^\theta \sum_{\ell=-\infty }^{-1}\left|\lambda_\ell \right|^{u(1+\vartheta)}+\lesssim \sup_{\vartheta>0 }\sup _{m_0\geq 0, m_0\in  \mathbb{Z}} 2^{-m_0\Gamma u(1+\vartheta)} \epsilon  ^\theta \sum_{\ell=0}^{\ell }\left|\lambda_\ell \right|^{u(1+\vartheta)}\\
    & \lesssim \Lambda .
\end{align*}

Thus, we completes our proof.
\end{proof}
\end{theorem}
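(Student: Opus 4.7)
The plan is to establish the two implications separately: necessity ($1 \Rightarrow 2$) by constructing an explicit atomic decomposition from the grand maximal function, and sufficiency ($2 \Rightarrow 1$) by estimating $\|G_N g\|$ using the equivalent triple-sum norm from Proposition 2.5. Throughout, the approximation by convolution with a mollifier $\phi_{(i)}(x) = 2^{in}\phi(2^i x)$ will let us work with smooth functions $g^{(i)} = g * \phi_{(i)}$, which converge to $g$ in $\mathcal{S}'(\mathbb{R}^n)$, and a compactness argument at the end will pass the decomposition from $g^{(i)}$ to $g$.

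For the necessity direction I would first fix a smooth partition of unity $\{\Phi_k\}_{k \in \mathbb{Z}}$ subordinate to slightly enlarged dyadic annuli $\widetilde{H}^k_\varepsilon$, then for each $k$ replace $g^{(i)}\Phi_k$ by the moment-corrected piece $g^{(i)}\Phi_k - P^{(i)}_k$, where $P^{(i)}_k$ is the unique polynomial of degree $\leq m := \max\{[\alpha(0) - n\delta_2], [\alpha_\infty - n\delta_2]\}$ on $\widetilde{H}^k_\varepsilon$ making $\int (g^{(i)}\Phi_k - P^{(i)}_k)x^\beta dx = 0$ for $|\beta| \leq m$. Normalizing by $\lambda_k \sim |B_{k+1}|^{\alpha_{k+1}/n} \sum_{\ell = k-1}^{k+1} \|(G_N g)\chi_\ell\|_{p(\cdot)}$ and controlling $\|P^{(i)}_k\|_{p(\cdot)}$ via dual orthonormal polynomials and Lemma 1.2 yields central atoms supported in $B_{k+1}$. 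The leftover polynomial sum $\sum_k P^{(i)}_k$ must be telescoped using a dual basis $\{\epsilon^k_\gamma\}$ to produce a second family of atoms, with the same type of $\lambda$-coefficient bounds. To bound the coefficient sum I would invoke Proposition 2.5 and split the $m_0$-supremum into $m_0 < 0$ and $m_0 \geq 0$ regimes, using $\alpha(0)$ or $\alpha_\infty$ accordingly. Finally, a Banach–Alaoglu diagonal argument extracts a subsequence $\{a_d^{(i_\nu)}\}$ converging weak-$*$ in $L^{p(\cdot)}$ to an atom $a_d$ for every $d$, and Lemmas 1.1, 1.3 together with the vanishing moment property (for $d \leq 0$) or polynomial decay (for $d > 0$) produce a summable majorant $\mu_d$ allowing one to pass to the limit in $\langle g, \varphi \rangle$.

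For sufficiency, given $g = \sum_k \lambda_k a_k$ with $\Lambda := \sup_{\vartheta,m_0} 2^{-m_0 \Gamma u(1+\vartheta)} \epsilon^\theta \sum_{k \leq m_0} |\lambda_k|^{u(1+\vartheta)}$ finite, I would bound $\|G_N g\|_{M\dot{K}}$ using the three-term equivalence from Proposition 2.5, writing it as $\max\{I, II + III\}$. Each piece gets further split via $G_N g \leq \sum_{\ell \geq k} |\lambda_\ell| G_N a_\ell + \sum_{\ell < k} |\lambda_\ell| G_N a_\ell$ into a local and a tail estimate. For the local part one uses $\|a_\ell\|_{p(\cdot)} \leq |B_\ell|^{-\alpha_\ell/n}$ directly. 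The nontrivial tail part requires the pointwise estimate $G_N a_\ell(x) \lesssim 2^{\ell(m+1)} 2^{-k(n+m+1)} |B_\ell|^{-\alpha_\ell/n} \|\chi_{B_\ell}\|_{p'(\cdot)}$ valid on $F_k$ for $k \geq \ell + 2$, obtained by Taylor-expanding $\phi \in \mathcal{A}_N$ at $y/t$ and using atomic vanishing moments. Then I split into the subcases $0 < u(1+\vartheta) \leq 1$ (where the power $(\sum)^{u(1+\vartheta)} \leq \sum(\cdot)^{u(1+\vartheta)}$ is free) and $u(1+\vartheta) > 1$ (where Hölder's inequality on the inner $\ell$-sum extracts a half-exponent geometric factor), collapsing the double geometric series using $m + 1 + n\delta_2 - \alpha_r > 0$ and $\alpha_r > 0$.

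The main obstacle will be book-keeping the three-way tension among $\Gamma$ (Morrey parameter), $\vartheta$ (grand parameter with weight $\epsilon^\theta$), and the sign/index structure of the atomic parameter $\alpha_k$ (which switches between $\alpha(0)$ and $\alpha_\infty$ at $k = 0$). Specifically, in the sufficiency part the tail sum $\sum_{\ell \geq m_0} |\lambda_\ell|^{u(1+\vartheta)} 2^{\Gamma(\ell - m_0)u(1+\vartheta)}$ is not directly controlled by $\Lambda$, so one must insert the factor $2^{(\ell - m_0)\Gamma u(1+\vartheta)}$ and absorb it into the geometric decay of the inner sum, requiring the strict inequalities $\Gamma < \alpha_\infty/2$ and $\Gamma < m + 1 + n\delta_2 - \alpha_\infty$; these force the regularity hypothesis $2\lambda \leq \alpha(\cdot)$ in the theorem. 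A secondary difficulty is justifying the identity $\langle g, \varphi \rangle = \sum_d \lambda_d \int a_d \varphi$ via dominated convergence with the explicit majorant $\mu_d$, which splits its definition at $d = 0$ to accommodate the two different decays available from the vanishing moment condition and the growth of $\varphi$.
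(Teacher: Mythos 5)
Your plan reproduces the paper's own argument essentially step for step: the same mollification $g^{(i)}=g*\phi_{(i)}$, partition of unity on the annuli $\widetilde{H}^k_\varepsilon$, moment-corrected pieces $g^{(i)}\Phi_k-P^{(i)}_k$ normalized by $\lambda_k\sim|B_{k+1}|^{\alpha_{k+1}/n}\sum_{\ell=k-1}^{k+1}\|(G_Ng)\chi_\ell\|_{p(\cdot)}$, the telescoped dual-basis decomposition of the polynomial remainder, the Banach--Alaoglu diagonal extraction with the majorant $\mu_d$, and, for sufficiency, the identical splitting into $I,II,III$ with the pointwise bound $G_Na_\ell(x)\lesssim 2^{\ell(m+1)}2^{-k(n+m+1)}|B_\ell|^{-\alpha_\ell/n}\|\chi_{B_\ell}\|_{p'(\cdot)}$ and the two cases $0<u(1+\vartheta)\le 1$ and $u(1+\vartheta)>1$ handled by Hölder with half-exponents. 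The approach is correct and coincides with the paper's proof, including your observation that the condition $2\Gamma\le\alpha(\cdot)$ is what makes the tail geometric series over $\ell\ge m_0$ converge.
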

\section{ Applications}
We are going to demonstrate the following outcome as an application of the atomic decompositions.

\begin{theorem}

 Let $0 \leq \Gamma<\infty$,  $0<u<\infty,$ , $p(\cdot) \in \mathcal{B}\left(\mathbb{R}^{n}\right)$, and $\alpha(\cdot) \in L^{\infty}\left(\mathbb{R}^{n}\right)$ be log-H\"older continuous both at the origin and infinity.  Let  $\alpha(\cdot)\geq 2\lambda$, with $n \delta_{2} \leq  \alpha(0), \alpha_{\infty}<\infty$. Let the integer $s=\max \left\{\left[\alpha(0)-n \delta_{2}\right],\left[\alpha_{\infty}-n \delta_{2}\right]\right\}$, and $\delta_{2}$ as in Lemma 1.3. Let a sublinear operator $T$ satisfying the following conditions:

(i) $T$ is bounded on $L^{p(\cdot)}$.

(ii) There exists a constant $\delta>0$ such that $s+\delta>$ $\max \left\{\alpha(0)-n \delta_{2}, \alpha_{\infty}-n \delta_{2}\right\}$, and for any compact support function $g$ with

\begin{equation*}
\int_{\mathbb{R}^{n}} f(x) x^{\beta} \mathrm{d} x=0, \quad|\beta| \leq  s. 
\end{equation*}

$Tf$ satisfies the size condition

\[
\begin{array}{r}
|T f(x)| \leq  C(\operatorname{diam}(\operatorname{supp} f))^{s+\delta}|x|^{-(n+s+\delta)}\|f\|_{1}  , 
\end{array}
\]
\text { when } $\operatorname{dist}(x, \text { supp } f) \geqslant \frac{|x|}{2}$. 
Then there exists a constant $C$ such that

\begin{align*}
\|T f\|_{M \dot{K} ^{\alpha(\cdot),u),\theta}_{ \Gamma, p(\cdot)}(\mathbb{R}^n)} & \leq  C\|f\|_{H M \dot{K} ^{\alpha(\cdot),u),\theta}_{ \Gamma, p(\cdot)}(\mathbb{R}^n)} ,
\end{align*}

for $f \in H {M \dot{K} ^{\alpha(\cdot),u),\theta}_{ \Gamma, p(\cdot)}(\mathbb{R}^n)}$ .

\end{theorem}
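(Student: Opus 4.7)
The plan is to decompose $f$ atomically using Theorem 2.3 and then estimate $Tf$ by exploiting linearity/sublinearity together with the two hypotheses on $T$: the $L^{p(\cdot)}$-boundedness handles the ``local'' pieces where $Ta_k$ overlaps the support of $a_k$, while the size condition (which uses the vanishing moments of the atoms) handles the ``far'' pieces. Concretely, write $f=\sum_{k\in\mathbb Z}\lambda_k a_k$ in $\mathcal S'(\mathbb R^n)$ with each $a_k$ a central $(\alpha(\cdot),p(\cdot))$-atom supported in $B_k$, and with
\[
\sup_{\vartheta>0}\sup_{m_0\in\mathbb Z}2^{-m_0\Gamma u(1+\vartheta)}\vartheta^\theta\sum_{k\le m_0}|\lambda_k|^{u(1+\vartheta)}\lesssim\|f\|_{HM\dot K^{\alpha(\cdot),u),\theta}_{\Gamma,p(\cdot)}(\mathbb R^n)}^{u(1+\vartheta)}.
\]
By sublinearity, $|Tf|\le\sum_k|\lambda_k||Ta_k|$, so $\|(Tf)\chi_j\|_{p(\cdot)}\le\sum_k|\lambda_k|\|(Ta_k)\chi_j\|_{p(\cdot)}$ and one splits the sum at $k=j-1$ into a ``near'' contribution ($k\ge j$) and a ``far'' contribution ($k\le j-1$).

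For the near part, hypothesis (i) gives $\|(Ta_k)\chi_j\|_{p(\cdot)}\le\|Ta_k\|_{p(\cdot)}\lesssim\|a_k\|_{p(\cdot)}\le|B_k|^{-\alpha_k/n}$, which is exactly the estimate used for an atom itself in the sufficiency direction of Theorem 2.3. For the far part ($j\ge k+2$), since $x\in F_j$ satisfies $\mathrm{dist}(x,\mathrm{supp}\,a_k)\ge|x|/2$, the size condition (ii) together with $\|a_k\|_1\le\|a_k\|_{p(\cdot)}\|\chi_{B_k}\|_{p'(\cdot)}$ yields
\[
|Ta_k(x)|\lesssim 2^{k(s+\delta)}|x|^{-(n+s+\delta)}|B_k|^{-\alpha_k/n}\|\chi_{B_k}\|_{p'(\cdot)},
\]
so that $\|(Ta_k)\chi_j\|_{p(\cdot)}\lesssim 2^{(k-j)(s+\delta)}2^{-jn}|B_k|^{-\alpha_k/n}\|\chi_{B_k}\|_{p'(\cdot)}\|\chi_{B_j}\|_{p(\cdot)}$. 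Invoking Lemma 1.3 turns this into $2^{(k-j)(s+\delta+n\delta_2)}|B_k|^{-\alpha_k/n}$. This is precisely the pointwise bound for $G_N a_k$ on $F_j$ that drives the sufficiency argument in the proof of Theorem 2.3 (with the role of $m+1$ now played by $s+\delta$, and $s+\delta>\max\{\alpha(0),\alpha_\infty\}-n\delta_2$ by hypothesis (ii)).

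Having matched both contributions to the building blocks already estimated in Theorem 2.3, the plan is to insert the above into the norm
\[
\|Tf\|_{M\dot K^{\alpha(\cdot),u),\theta}_{\Gamma,p(\cdot)}}=\sup_{\vartheta>0}\sup_{m_0\in\mathbb Z}2^{-m_0\Gamma}\Bigl(\vartheta^\theta\sum_{k\le m_0}\|2^{k\alpha(\cdot)}(Tf)\chi_k\|_{p(\cdot)}^{u(1+\vartheta)}\Bigr)^{1/(u(1+\vartheta))}
\]
and use Proposition 2.6 to reduce to the three sums $I$, $II$, $III$ with $\alpha(0)$ in place of $\alpha(\cdot)$ for $k<0$ and $\alpha_\infty$ in place of $\alpha(\cdot)$ for $k\ge 0$. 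One then follows verbatim the six sub-estimates $I_1,I_2,II_1,II_2,III_1,III_2$ carried out in Theorem 2.3, distinguishing the two ranges $0<u(1+\vartheta)\le 1$ (direct summation, exploiting the $\ell^{u(1+\vartheta)}$-embedding) and $1<u(1+\vartheta)<\infty$ (H\"older's inequality in $\ell^{u(1+\vartheta)}$). Each geometric-type double sum over $k,\ell$ converges thanks to the key index inequalities $s+\delta+n\delta_2>\alpha(0)$ and $s+\delta+n\delta_2>\alpha_\infty$ (coming from (ii)), together with $\alpha(\cdot)\ge 2\Gamma$ used exactly as in Theorem 2.3 to handle the Morrey factor $2^{-m_0\Gamma u(1+\vartheta)}$ when the inner summation extends beyond $m_0$.

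The main obstacle is bookkeeping rather than a genuinely new idea: one must ensure that the cut-offs $s+\delta>\max\{\alpha(0)-n\delta_2,\alpha_\infty-n\delta_2\}$ provide strict geometric decay in \emph{every} one of the six branches, and that the $\sup_{\vartheta>0}\vartheta^\theta$ is carried uniformly through H\"older's inequality (so the exponents split as $u(1+\vartheta)/2$ and $(u(1+\vartheta))'/2$, as in the proof of Theorem~2.3). Once all six pieces are bounded by $\Lambda:=\sup_{\vartheta}\sup_{m_0}2^{-m_0\Gamma u(1+\vartheta)}\vartheta^\theta\sum_{k\le m_0}|\lambda_k|^{u(1+\vartheta)}$, taking the infimum over admissible atomic decompositions and invoking Theorem~2.3 converts $\Lambda^{1/(u(1+\vartheta))}$ into $\|f\|_{HM\dot K^{\alpha(\cdot),u),\theta}_{\Gamma,p(\cdot)}}$, which gives the desired estimate.
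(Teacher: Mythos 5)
Your proposal follows essentially the same route as the paper's proof: atomic decomposition via Theorem 2.3, splitting each annular norm into a near part ($k\ge j$) handled by the $L^{p(\cdot)}$-boundedness of $T$ and a far part ($k\le j-1$) handled by the size condition together with the generalized H\"older inequality and Lemma 1.3 to produce the decay factor $2^{(k-j)(s+\delta+n\delta_2)}|B_k|^{-\alpha_k/n}$, followed by the same six-fold case analysis in the two ranges of $u(1+\vartheta)$. The key estimates and the convergence conditions $n\delta_2\le\alpha(0),\alpha_\infty<s+\delta+n\delta_2$ that you identify are exactly those used in the paper, so the plan is sound.
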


\begin{proof}

 Suppose that  $f \in  H {M \dot{K} ^{\alpha(\cdot),u),\theta}_{ \Gamma, p(\cdot)}(\mathbb{R}^n)}$. By using Theorem 2.3, $f=\sum_{i=-\infty}^{\infty} \lambda_{i} b_{i}$ converges in $\mathcal{S}^{\prime}\left(\mathbb{R}^{n}\right)$, where each $b_{i}$ is a central $(\alpha(\cdot), p(\cdot))$-atom with support contained in $B_{i}$ and

\begin{equation*}
\|f\|_ {H {M \dot{K} ^{\alpha(\cdot),u),\theta}_{ \Gamma, p(\cdot)}(\mathbb{R}^n)}} \approx \inf \left(\sup_{\vartheta>0 }\sup _{m_0\in  \mathbb{Z}} 2^{-m_0\Gamma}\left(\epsilon  ^\theta\sum_{i=-\infty}^{\ell }\left|\lambda_{i}\right|^{u(1+\vartheta)}\right)^{\frac{1}{u(1+\vartheta)}}\right) . \tag{73}
\end{equation*}

To keep things simple, we denote 
 $\Lambda=\sup_{\vartheta>0 }\sup _{m_0\in  \mathbb{Z}} 2^{-m_0\Gamma u(1+\vartheta)}\epsilon  ^\theta \sum_{i=-\infty}^{\ell }\left|\lambda_{i}\right|^{u(1+\vartheta)}$. By Proposition $1.6$, we have

\begin{align*}
& \|T f\|_{M \dot{K} ^{\alpha(\cdot),u),\theta}_{ \Gamma, p(\cdot)}(\mathbb{R}^n)}^{u(1+\vartheta)} \\
& \approx \max \left\{\sup_{\vartheta>0 }\sup _{m_0<0, m_0\in  \mathbb{Z}} 2^{-m_0\Gamma u(1+\vartheta)} \epsilon  ^\theta \left(\sum_{k=-\infty}^{m_0} 2^{k  \alpha(0) u(1+\vartheta)}\left\|(T f) \chi_{k}\right\|_{{p(\cdot)}}^{u(1+\vartheta)}\right),\right. \\
& \sup_{\vartheta>0 }\sup _{m_0\geq 0, m_0\in  \mathbb{Z}} 2^{-L \lambda u(1+\vartheta)} \epsilon  ^\theta \left(\sum_{k=-\infty}^{-1} 2^{k  \alpha(0) u(1+\vartheta)}\left\|(T f) \chi_{k}\right\|_{{p(\cdot)}}^{u(1+\vartheta)}\right. \\
& \left.\left.\quad+\sum_{k=0}^{m_0} 2^{k \alpha_\infty u(1+\vartheta)}\left\|(T f) \chi_{k}\right\|_{{p(\cdot)}}^{u(1+\vartheta)}\right)\right\} \\
& \lesssim \max \{I, I I+I I I\}. 
\end{align*}

We will find the estimated for $I$ and $III$ and estimate of $II$ can be obtained similarly.

We just need to demonstrate that there is a positive constant $C$ such that $I, I I, I I I \leq C \Lambda$ in order to finish our proof.

Firstly, we will find the  estimate of $I$ :

\begin{align*}
I= & \sup_{\vartheta>0 }\sup _{m_0<0, m_0\in  \mathbb{Z}} 2^{-m_0\Gamma u(1+\vartheta)} \epsilon  ^\theta \left(\sum_{k=-\infty}^{m_0} 2^{k  \alpha(0) u(1+\vartheta)}\left\|(T f) \chi_{k}\right\|_{{p(\cdot)}}^{u(1+\vartheta)}\right) \\
\lesssim & \sup_{\vartheta>0 }\sup _{m_0<0, m_0\in  \mathbb{Z}} 2^{-m_0\Gamma u(1+\vartheta)} \epsilon  ^\theta\sum_{k=-\infty}^{m_0} 2^{k \alpha(0) u(1+\vartheta)}\left(\sum_{i=k}^{\infty}\left|\lambda_{i}\right|\left\|\left(T b_{i}\right) \chi_{k}\right\|_{{p(\cdot)}}\right)^{u(1+\vartheta)} \\
+& \sup_{\vartheta>0 }\sup _{m_0<0, m_0\in  \mathbb{Z}} 2^{-m_0\Gamma u(1+\vartheta)} \epsilon  ^\theta \sum_{k=-\infty}^{m_0} 2^{k \alpha(0) u(1+\vartheta)}\left(\sum_{i=-\infty}^{k-1}\left|\lambda_{i}\right|\left\|\left(T b_{i}\right) \chi_{k}\right\|_{{p(\cdot)}}\right)^{u(1+\vartheta)} \\
:= & I_{1}+I_{2} . 
\end{align*}

By the boundedness of $T$ in ${p(\cdot)}$, we have

\begin{equation*}
\left\|\left(T b_{i}\right) \chi_{k}\right\|_{{p(\cdot)}} \leq \left\|b_{i}\right\|_{{p(\cdot)}} \leq \left|B_{i}\right|^{-\alpha_{i} / n}=2^{-\alpha_{i} i} . 
\end{equation*}

Therefore, when $0<u(1+\vartheta) \leq  1$, we get

\begin{align*}
I_{1}= & \sup_{\vartheta>0 }\sup _{m_0<0, m_0\in  \mathbb{Z}} 2^{-m_0\Gamma u(1+\vartheta)} \epsilon  ^\theta\sum_{k=-\infty}^{m_0} 2^{k \alpha(0) u(1+\vartheta)}\left(\sum_{i=k}^{\infty}\left|\lambda_{i}\right|\left\|\left(T b_{i}\right) \chi_{k}\right\|_{{p(\cdot)}}\right)^{u(1+\vartheta)}\\
\lesssim  & \sup_{\vartheta>0 }\sup _{m_0<0, m_0\in  \mathbb{Z}} 2^{-m_0\Gamma u(1+\vartheta)} \epsilon  ^\theta \sum_{k=-\infty}^{m_0} 2^{\alpha(0)k u(1+\vartheta)  }\left(\sum_{i=k}^{\infty}\left|\lambda_{i}\right| 2^{-\alpha_{i} i}\right)^{u(1+\vartheta)} \\
\lesssim  & \sup_{\vartheta>0 }\sup _{m_0<0, m_0\in  \mathbb{Z}} 2^{-m_0\Gamma u(1+\vartheta)} \epsilon  ^\theta \\
& \times \sum_{k=-\infty}^{m_0} 2^{k \alpha(0) u(1+\vartheta) }\left(\sum_{i=k}^{-1}\left|\lambda_{i}\right|^{u(1+\vartheta)} 2^{-\alpha(0) iu (1+\vartheta)}+\sum_{i=0}^{\infty}\left|\lambda_{i}\right|^{u(1+\vartheta)} 2^{-\alpha_{\infty} iu (1+\vartheta)}\right) \\
\lesssim  & \sup_{\vartheta>0 }\sup _{m_0<0, m_0\in  \mathbb{Z}} 2^{-m_0\Gamma u(1+\vartheta)} \epsilon  ^\theta \sum_{k=-\infty}^{m_0} \sum_{i=k}^{-1}\left|\lambda_{i}\right|^{u(1+\vartheta)} 2^{\alpha(0)(k-i) u(1+\vartheta)} \\
+& \sup_{\vartheta>0 }\sup _{m_0<0, m_0\in  \mathbb{Z}} 2^{-m_0\Gamma u(1+\vartheta)} \epsilon  ^\theta \sum_{k=-\infty}^{m_0} 2^{\alpha(0)k  u(1+\vartheta)} \sum_{i=0}^{\infty}\left|\lambda_{i}\right|^{u(1+\vartheta)} 2^{-\alpha_{\infty} iu (1+\vartheta)} \\
\lesssim  & \sup_{\vartheta>0 }\sup _{m_0<0, m_0\in  \mathbb{Z}} 2^{-m_0\Gamma u(1+\vartheta)} \epsilon  ^\theta \sum_{i=-\infty}^{-1}\left|\lambda_{i}\right|^{u(1+\vartheta)} \sum_{k=-\infty}^{i} 2^{\alpha(0)(k-i) u(1+\vartheta)} \\
+& \sup_{\vartheta>0 }\sup _{m_0<0  , m_0\in  \mathbb{Z}} \epsilon  ^\theta \sum_{i=0}^{\infty} 2^{-i\Gamma  u(1+\vartheta)}\left|\lambda_{i}\right|^{u(1+\vartheta)} 2^{\left(\Gamma-\alpha_{\infty}\right) iu (1+\vartheta)} 2^{-m_0\Gamma u(1+\vartheta)} \sum_{k=-\infty}^{m_0} 2^{\alpha(0) k u(1+\vartheta)}\\
\lesssim  & \sup_{\vartheta>0 }\sup _{m_0<0, m_0\in  \mathbb{Z}} 2^{-m_0\Gamma u(1+\vartheta)} \epsilon  ^\theta \sum_{i=-\infty}^{\ell }\left|\lambda_{i}\right|^{u(1+\vartheta)}+\sup_{\vartheta>0 }\sup _{m_0<0, m_0\in  \mathbb{Z}} 2^{-m_0\Gamma u(1+\vartheta)} \epsilon  ^\theta \sum_{i=m_0}^{-1}\left|\lambda_{i}\right|^{u(1+\vartheta)} \sum_{k=-\infty}^{i} 2^{\alpha(0)(k-i) u(1+\vartheta)}   \\
+ & \Lambda  \sup_{\vartheta>0 }\sup _{m_0<0  , m_0\in  \mathbb{Z}} \epsilon  ^\theta \sum_{i=0}^{\infty} 2^{\left(\Gamma-\alpha_{\infty}\right) iu (1+\vartheta)}  \sum_{k=-\infty}^{m_0} 2^{\alpha(0) k u(1+\vartheta)-m_0\Gamma u(1+\vartheta)}\\
\lesssim & \Lambda +\sup_{\vartheta>0 }\sup _{m_0<0, m_0\in  \mathbb{Z}}  \epsilon  ^\theta \sum_{i=m_0}^{-1}2^{-j\Gamma u(1+\vartheta)}\left|\lambda_{i}\right|^{u(1+\vartheta)} 2^{(i-m_0) \Gamma u(1+\vartheta)}\sum_{k=-\infty}^{i} 2^{\alpha(0)(k-i) u(1+\vartheta)}  + \Lambda \\
\lesssim & \Lambda + \Lambda \sup_{\vartheta>0 }\sup _{m_0<0, m_0\in  \mathbb{Z}}  \epsilon  ^\theta \sum_{i=m_0}^{-1} 2^{(i-m_0) \Gamma u(1+\vartheta)}\sum_{k=-\infty}^{i} 2^{\alpha(0)(k-i) u(1+\vartheta)}\\
\lesssim & \Lambda.
\end{align*}

Now we will the estimate for the second case when $1<u(1+\vartheta)<\infty$. Let $\frac{1}{u(1+\vartheta)}+ \frac{1}{(u(1+\vartheta))^\prime}=1$,  we obtain

\begin{align*}
    I_{1}&=  \sup_{\vartheta>0 }\sup _{m_0<0, m_0\in  \mathbb{Z}} 2^{-m_0\Gamma u(1+\vartheta)} \epsilon  ^\theta\sum_{k=-\infty}^{m_0} 2^{k \alpha(0) u(1+\vartheta)}\left(\sum_{i=k}^{\infty}\left|\lambda_{i}\right|\left\|\left(T b_{i}\right) \chi_{k}\right\|_{{p(\cdot)}}\right)^{u(1+\vartheta)}\\
    & \lesssim  \sup_{\vartheta>0 }\sup _{m_0<0, m_0\in  \mathbb{Z}} 2^{-m_0\Gamma u(1+\vartheta)} \epsilon  ^\theta \sum_{k=-\infty}^{m_0} 2^{\alpha(0)k  u(1+\vartheta)}\left(\sum_{i=k}^{\infty}\left|\lambda_{i}\right| 2^{-\alpha_{i} i}\right)^{u(1+\vartheta)} \\
    & \lesssim \sup_{\vartheta>0 }\sup _{m_0<0, m_0\in  \mathbb{Z}} 2^{-m_0\Gamma u(1+\vartheta)} \epsilon  ^\theta \sum_{k=-\infty}^{m_0}\left(\sum_{i=k}^{-1}\left|\lambda_{i}\right| 2^{\alpha(0)(k-i)}\right)^{u(1+\vartheta)} \\
    &+ \sup_{\vartheta>0 }\sup _{m_0<0, m_0\in  \mathbb{Z}} 2^{-m_0\Gamma u(1+\vartheta)} \epsilon  ^\theta  \sum_{k=-\infty}^{m_0} 2^{\alpha(0)k u(1+\vartheta)}\left(\sum_{i=0}^{\infty}\left|\lambda_{i}\right| 2^{-i \alpha_{\infty} u(1+\vartheta)}\right)^{u(1+\vartheta)} \\
    & \lesssim   \sup_{\vartheta>0 }\sup _{m_0<0, m_0\in  \mathbb{Z}} 2^{-m_0\Gamma u(1+\vartheta)} \epsilon  ^\theta \sum_{k=-\infty}^{m_0}\left(\sum_{i=k}^{-1}\left|\lambda_{i}\right|^{u(1+\vartheta)} 2^{\alpha(0)(k-i) u(1+\vartheta) / 2}\right)  \times\left(\sum_{i=k}^{-1} 2^{\alpha(0)(k-i) (u(1+\vartheta))^{\prime} / 2}\right)^{u(1+\vartheta) / (u(1+\vartheta))^{\prime}} \\
    & +\sup_{\vartheta>0 }\sup _{m_0<0, m_0\in  \mathbb{Z}} 2^{-m_0\Gamma u(1+\vartheta)} \epsilon  ^\theta \sum_{k=-\infty}^{m_0} 2^{\alpha(0)k u(1+\vartheta) }\left(\sum_{i=0}^{\infty}\left|\lambda_{i}\right|^{u(1+\vartheta)} 2^{-\alpha_{\infty} iu (1+\vartheta) / 2}\right)  \left(\sum_{i=0}^{\infty} 2^{-\alpha_{\infty} i( u(1+\vartheta))^{\prime} / 2}\right)^{u(1+\vartheta) / (u(1+\vartheta))^{\prime}} \\
    & \lesssim   \sup_{\vartheta>0 }\sup _{m_0<0, m_0\in  \mathbb{Z}} 2^{-m_0\Gamma u(1+\vartheta)} \epsilon  ^\theta \sum_{k=-\infty}^{m_0}\left(\sum_{i=k}^{-1}\left|\lambda_{i}\right|^{u(1+\vartheta)} 2^{\alpha(0)(k-i) u(1+\vartheta) / 2}\right)   \\
    & +\sup_{\vartheta>0 }\sup _{m_0<0, m_0\in  \mathbb{Z}} 2^{-m_0\Gamma u(1+\vartheta)} \epsilon  ^\theta \sum_{k=-\infty}^{m_0} 2^{\alpha(0)k u(1+\vartheta) }\left(\sum_{i=0}^{\infty}\left|\lambda_{i}\right|^{u(1+\vartheta)} 2^{-\alpha_{\infty} iu (1+\vartheta) / 2}\right)  \\
    & \lesssim  \sup_{\vartheta>0 }\sup _{m_0<0, m_0\in  \mathbb{Z}} 2^{-m_0\Gamma u(1+\vartheta)} \epsilon  ^\theta \sum_{i=-\infty}^{\ell }\left|\lambda_{i}\right|^{u(1+\vartheta)} +\sup_{\vartheta>0 }\sup _{m_0<0, m_0\in  \mathbb{Z}} 2^{-m_0\Gamma u(1+\vartheta)} \epsilon  ^\theta \sum_{i=m_0}^{-1}\left|\lambda_{i}\right|^{u(1+\vartheta)} \sum_{k=-\infty}^{i} 2^{\alpha(0)(k-i) u(1+\vartheta) / 2} \\
&+\Lambda \sup_{\vartheta>0 }\sup _{m_0<0, m_0\in  \mathbb{Z}}  \epsilon  ^\theta \sum_{i=0}^{\infty} 2^{\left(\Gamma-\alpha_{\infty} / 2\right) iu (1+\vartheta)} \sum_{k=-\infty}^{m_0} 2^{(\alpha(0)k -m_0\Gamma) u(1+\vartheta)} \\
& \lesssim  \Lambda+\sup_{\vartheta>0 }\sup _{m_0<0, m_0\in  \mathbb{Z}}  \epsilon  ^\theta\sum_{i=m_0}^{-1} 2^{-i\Gamma  u(1+\vartheta)}\left|\lambda_{i}\right|^{u(1+\vartheta)} 2^{(i-m_0) \Gamma u(1+\vartheta)} \sum_{k=-\infty}^{i} 2^{\alpha(0)(k-i) u(1+\vartheta) / 2}+\Lambda \\
& \lesssim \Lambda+\Lambda \sup_{\vartheta>0 }\sup _{m_0<0, m_0\in  \mathbb{Z}}  \epsilon  ^\theta \sum_{i=m_0}^{-1} 2^{(i-m_0) \Gamma u(1+\vartheta)} \sum_{k=-\infty}^{i} 2^{\alpha(0)(k-i) u(1+\vartheta) / 2} \\
& \lesssim \Lambda .
\end{align*}

Second, we estimate $I_{2}$. By using the H\"older's inequality and Lemma 1.3, we get

\begin{align*}
\left|T b_{i}(x)\right| & \lesssim|x|^{-(n+s+\delta)} 2^{i((s+\delta)} \int_{B_{i}}\left|b_{i}(y)\right| \mathrm{d} y \\
& \lesssim 2^{-k(n+s+\delta)} 2^{i((s+\delta)}\left\|b_{i}\right\|_{{p(\cdot)}}\left\|\chi_{B_{i}}\right\|_{{p^{\prime}(\cdot)}}  \\
& \lesssim 2^{i\left(s+\delta-\alpha_{i}\right)-k(s+\delta+n)}\left\|\chi_{B_{i}}\right\|_{{p^{\prime}(\cdot)}} .
\end{align*}

By using these estimates, we have

\begin{align*}
\|( & \left.T b_{i}\right) \chi_{k} \|_{{p(\cdot)}} \\
& \lesssim 2^{i\left(s+\delta-\alpha_{i}\right)-k(s+\delta+n)}\left\|\chi_{B_{i}}\right\|_{{p^{\prime}(\cdot)}}\left\|\chi_{B_{k}}\right\|_{{p(\cdot)}} \\
& \lesssim 2^{i\left(s+\delta-\alpha_{i}\right)-k(s+\delta)} 2^{-k n}\left(\left|B_{k}\right|\left\|\chi_{B_{k}}\right\|_{{p^{\prime}(\cdot)}}^{-1}\right)\left\|\chi_{B_{i}}\right\|_{{p^{\prime}(\cdot)}}  \\
& \lesssim 2^{i\left(s+\delta-\alpha_{i}\right)-k(s+\delta)} \frac{\left\|\chi_{B_{i}}\right\|_{{p^{\prime}(\cdot)}}}{\left\|\chi_{B_{k}}\right\|_{{p^{\prime}(\cdot)}}} \\
& \lesssim 2^{\left(s+\delta+n \delta_{2}\right)(i-k)-i \alpha_{i}} .
\end{align*}

Therefore, when $0<u(1+\vartheta) \leq  1$, by $n \delta_{2} \leq  \alpha(0)<s+\delta+n \delta_{2}$ we get

\begin{align*}
 I_{1}= &  \sup_{\vartheta>0 }\sup _{m_0<0, m_0\in  \mathbb{Z}} 2^{-m_0\Gamma u(1+\vartheta)} \epsilon  ^\theta\sum_{k=-\infty}^{m_0} 2^{k \alpha(0) u(1+\vartheta)}\left(\sum_{i=k}^{\infty}\left|\lambda_{i}\right|\left\|\left(T b_{i}\right) \chi_{k}\right\|_{{p(\cdot)}}\right)^{u(1+\vartheta)}\\
\lesssim &  \sup_{\vartheta>0 }\sup _{m_0<0, m_0\in  \mathbb{Z}} 2^{-m_0\Gamma u(1+\vartheta)} \epsilon  ^\theta \\
& \times \sum_{k=-\infty}^{m_0} 2^{k \alpha(0) u(1+\vartheta)}\left(\sum_{i=-\infty}^{k-1}\left|\lambda_{i}\right|^{u(1+\vartheta)} 2^{\left[\left(s+\delta+n \delta_{2}\right)(i-k)-i \alpha(0)\right] u(1+\vartheta)}\right)\\
=& \sup_{\vartheta>0 }\sup _{m_0<0, m_0\in  \mathbb{Z}} 2^{-m_0\Gamma u(1+\vartheta)} \epsilon  ^\theta \sum_{i=-\infty}^{\ell }\left|\lambda_{i}\right|^{u(1+\vartheta)} \sum_{k=j+1}^{-1} 2^{(i-k)\left(s+\delta+n \delta_{2}-\alpha(0)\right) u(1+\vartheta)} \\
 \lesssim & \Lambda  .
\end{align*}

When $1<u(1+\vartheta)<\infty$, let $\frac{1}{u(1+\vartheta)}+ \frac{1}{(u(1+\vartheta))\prime}=1$. Since $n \delta_{2} \leq  \alpha(0)<$ $s+\delta+n \delta_{2}$, by H\"older's inequality, we have

\begin{align*}
I_{2}= & \sup_{\vartheta>0 }\sup _{m_0<0, m_0\in  \mathbb{Z}} 2^{-m_0\Gamma u(1+\vartheta)} \epsilon  ^\theta \sum_{k=-\infty}^{m_0} 2^{k \alpha(0) u(1+\vartheta)}\left(\sum_{i=-\infty}^{k-1}\left|\lambda_{i}\right|\left\|\left(T b_{i}\right) \chi_{k}\right\|_{{p(\cdot)}}\right)^{u(1+\vartheta)} \\
\lesssim  & \sup_{\vartheta>0 }\sup _{m_0<0, m_0\in  \mathbb{Z}} 2^{-m_0\Gamma u(1+\vartheta)} \epsilon  ^\theta \sum_{k=-\infty}^{m_0} 2^{\alpha(0) k u(1+\vartheta)}\left(\sum_{i=-\infty}^{k-1}\left|\lambda_{i}\right| 2^{\left(s+\delta+n \delta_{2}\right)(i-k)-i \alpha(0)}\right)^{u(1+\vartheta)} \\
\lesssim  & \sup_{\vartheta>0 }\sup _{m_0<0, m_0\in  \mathbb{Z}} 2^{-m_0\Gamma u(1+\vartheta)} \epsilon  ^\theta  \sum_{k=-\infty}^{m_0} 2^{\alpha(0) k u(1+\vartheta)}\left(\sum_{i=-\infty}^{k-1}\left|\lambda_{i}\right|^{u(1+\vartheta)} 2^{(i-k)\left(s+\delta+n \delta_{2}-\alpha(0)\right) u(1+\vartheta) / 2}\right) \\
& \times\left(\sum_{i=-\infty}^{k-1} 2^{(i-k)\left(s+\delta+n \delta_{2}-\alpha(0)\right) (u(1+\vartheta))^{\prime} / 2}\right)^{u(1+\vartheta) / (u(1+\vartheta))^{\prime}} \\
\lesssim  &  \sup_{\vartheta>0 }\sup _{m_0<0, m_0\in  \mathbb{Z}} 2^{-m_0\Gamma u(1+\vartheta)} \epsilon  ^\theta  \sum_{k=-\infty}^{m_0} 2^{\alpha(0) k u(1+\vartheta)}\left(\sum_{i=-\infty}^{k-1}\left|\lambda_{i}\right|^{u(1+\vartheta)} 2^{(i-k)\left(s+\delta+n \delta_{2}-\alpha(0)\right) u(1+\vartheta) / 2}\right) \\
= & \sup_{\vartheta>0 }\sup _{m_0<0, m_0\in  \mathbb{Z}} 2^{-m_0\Gamma u(1+\vartheta)} \epsilon  ^\theta \sum_{i=-\infty}^{\ell }\left|\lambda_{i}\right|^{u(1+\vartheta)} \sum_{k=j+1}^{-1} 2^{(i-k)\left(s+\delta+n \delta_{2}-\alpha(0)\right) u(1+\vartheta) / 2}\\
\lesssim & \Lambda .
\end{align*}

Finally, we estimate $I I I$ :

\begin{align*}
I I I&=  \sup_{\vartheta>0 }\sup _{m_0<0, m_0\in  \mathbb{Z}} 2^{-m_0\Gamma u(1+\vartheta)} \epsilon  ^\theta \sum_{k=0}^{m_0} 2^{k  \alpha_{\infty}u(1+\vartheta)}\left\|(T f) \chi_{k}\right\|_{{p(\cdot)}}^{u(1+\vartheta)} \\
& \lesssim \sup_{\vartheta>0 }\sup _{m_0<0, m_0\in  \mathbb{Z}} 2^{-m_0\Gamma u(1+\vartheta)} \epsilon  ^\theta \sum_{k=0}^{m_0} 2^{k  \alpha_{\infty}u(1+\vartheta)}\left(\sum_{i=k}^{\infty}\left|\lambda_{i}\right|\left\|\left(T b_{i}\right) \chi_{k}\right\|_{{p(\cdot)}}\right)^{u(1+\vartheta)} \\
& +\sup_{\vartheta>0 }\sup _{m_0<0, m_0\in  \mathbb{Z}} 2^{-m_0\Gamma u(1+\vartheta)} \epsilon  ^\theta \sum_{k=0}^{m_0} 2^{k  \alpha_{\infty}u(1+\vartheta)}\left(\sum_{i=-\infty}^{k-1}\left|\lambda_{i}\right|\left\|\left(T b_{i}\right) \chi_{k}\right\|_{{p(\cdot)}}\right)^{u(1+\vartheta)} \\
& :=I I I_{1}+I I I_{2} . \tag{89}
\end{align*}

When $0<u(1+\vartheta) \leq  1$, by the boundedness of $T$ in $L^{p(\cdot)}$, we have

\begin{align*}
I I I_{1}= & \sup_{\vartheta>0 }\sup _{m_0<0, m_0\in  \mathbb{Z}} 2^{-m_0\Gamma u(1+\vartheta)} \epsilon  ^\theta \sum_{k=0}^{m_0} 2^{k  \alpha_{\infty}u(1+\vartheta)}\left(\sum_{i=k}^{\infty}\left|\lambda_{i}\right|\left\|\left(T b_{i}\right) \chi_{k}\right\|_{{p(\cdot)}}\right)^{u(1+\vartheta)} \\
\lesssim & \sup_{\vartheta>0 }\sup _{m_0<0, m_0\in  \mathbb{Z}} 2^{-m_0\Gamma u(1+\vartheta)} \epsilon  ^\theta \sum_{k=0}^{m_0} 2^{k\alpha_{\infty} u(1+\vartheta) } \sum_{i=k}^{\infty}\left|\lambda_{i}\right|^{u(1+\vartheta)}\left\|\left(T b_{i}\right) \chi_{k}\right\|_{{p(\cdot)}}^{u(1+\vartheta)} \\
\lesssim & \sup_{\vartheta>0 }\sup _{m_0<0, m_0\in  \mathbb{Z}} 2^{-m_0\Gamma u(1+\vartheta)} \epsilon  ^\theta \sum_{k=0}^{m_0} 2^{k\alpha_{\infty}u(1+\vartheta)  } \sum_{i=k}^{\infty}\left|\lambda_{i}\right|^{u(1+\vartheta)} 2^{-\alpha_{i}j  u(1+\vartheta)} \\
\lesssim & \sup_{\vartheta>0 }\sup _{m_0<0, m_0\in  \mathbb{Z}} 2^{-m_0\Gamma u(1+\vartheta)} \epsilon  ^\theta \sum_{k=0}^{m_0} 2^{\alpha_{\infty} k u(1+\vartheta)} \sum_{i=k}^{\infty}\left|\lambda_{i}\right|^{u(1+\vartheta)} 2^{-\alpha_{\infty} iu (1+\vartheta)} \\
= & \sup_{\vartheta>0 }\sup _{m_0<0, m_0\in  \mathbb{Z}} 2^{-m_0\Gamma u(1+\vartheta)} \epsilon  ^\theta \sum_{i=0}^{m_0}\left|\lambda_{i}\right|^{u(1+\vartheta)} \sum_{k=0}^{i} 2^{(k-i) \alpha_{\infty} u(1+\vartheta)} \\
+ & \sup_{\vartheta>0 }\sup _{m_0<0, m_0\in  \mathbb{Z}} 2^{-m_0\Gamma u(1+\vartheta)} \epsilon  ^\theta \sum_{i=m_0}^{\infty}\left|\lambda_{i}\right|^{u(1+\vartheta)} \sum_{k=0}^{m_0} 2^{(k-i) \alpha_{\infty} u(1+\vartheta)} \\
\lesssim & \sup_{\vartheta>0 }\sup _{m_0<0, m_0\in  \mathbb{Z}} 2^{-m_0\Gamma u(1+\vartheta)} \epsilon  ^\theta \sum_{i=0}^{m_0}\left|\lambda_{i}\right|^{u(1+\vartheta)} \\
+ &\sup_{\vartheta>0 }\sup _{m_0<0, m_0\in  \mathbb{Z}} \epsilon  ^\theta \sum_{i=m_0}^{\infty} 2^{(i\Gamma  u(1+\vartheta)-m_0\Gamma u(1+\vartheta))} 2^{-i\Gamma  u(1+\vartheta)} \sum_{i=-\infty}^{i}\left|\lambda_{i}\right|^{u(1+\vartheta)} \sum_{k=0}^{m_0} 2^{(k-i) \alpha_{\infty} u(1+\vartheta)} \\
\lesssim & \Lambda+\Lambda \sup_{\vartheta>0 }\sup _{m_0<0, m_0\in  \mathbb{Z}} \epsilon  ^\theta \sum_{i=m_0}^{\infty} 2^{(i-m_0) \Gamma u(1+\vartheta)} 2^{(L-j) \alpha_{\infty} u(1+\vartheta)} \\
\lesssim & \Lambda+\Lambda \sup_{\vartheta>0 }\sup _{m_0<0, m_0\in  \mathbb{Z}} \epsilon  ^\theta \sum_{i=m_0}^{\infty} 2^{(i-m_0) u(1+\vartheta)\left(\Gamma-\alpha_{\infty}\right)} \\
\lesssim & \Lambda .
\end{align*}

Due to H\"older's inequality and the boundedness of $T$ in $L^{p(\cdot)}$, for $1<u(1+\vartheta)<\infty$, we obtain

\begin{align*}
I I I_{1}&=  \sup_{\vartheta>0 }\sup _{m_0<0, m_0\in  \mathbb{Z}} 2^{-m_0\Gamma u(1+\vartheta)} \epsilon  ^\theta \sum_{k=0}^{m_0} 2^{k  \alpha_{\infty}u(1+\vartheta)}\left(\sum_{i=k}^{\infty}\left|\lambda_{i}\right|\left\|\left(T b_{i}\right) \chi_{k}\right\|_{{p(\cdot)}}\right)^{u(1+\vartheta)} \\
& \lesssim \sup_{\vartheta>0 }\sup _{m_0<0, m_0\in  \mathbb{Z}} 2^{-m_0\Gamma u(1+\vartheta)} \epsilon  ^\theta \sum_{k=0}^{m_0} 2^{\alpha_{\infty} k u(1+\vartheta)}\left(\sum_{i=k}^{\infty}\left|\lambda_{i}\right|^{u(1+\vartheta)}\left\|\left(T b_{i}\right) \chi_{k}\right\|_{{p(\cdot)}}^{u(1+\vartheta) / 2}\right) \\
& \times\left(\sum_{i=k}^{\infty}\left\|\left(T b_{i}\right) \chi_{k}\right\|_{{p(\cdot)}}^{(u(1+\vartheta))^{\prime} / 2}\right)^{u(1+\vartheta) / (u(1+\vartheta))^{\prime}} \\
& \lesssim  \sup_{\vartheta>0 }\sup _{m_0<0, m_0\in  \mathbb{Z}} 2^{-m_0\Gamma u(1+\vartheta)} \epsilon  ^\theta \sum_{k=0}^{m_0} 2^{\alpha_{\infty} k u(1+\vartheta)}\left(\sum_{i=k}^{\infty}\left|\lambda_{i}\right|^{u(1+\vartheta)}\left\|b_{i}\right\|_{{p(\cdot)}}^{u(1+\vartheta) / 2}\right) \left(\sum_{i=k}^{\infty}\left\|b_{i}\right\|_{L^{p}}^{(u(1+\vartheta))^{\prime} / 2}\right)^{u(1+\vartheta) / (u(1+\vartheta))^{\prime}}\\
& \lesssim  \sup_{\vartheta>0 }\sup _{m_0<0, m_0\in  \mathbb{Z}} 2^{-m_0\Gamma u(1+\vartheta)} \epsilon  ^\theta \sum_{k=0}^{m_0} 2^{\alpha_{\infty} k u(1+\vartheta)}\left(\sum_{i=k}^{\infty}\left|\lambda_{i}\right|^{u(1+\vartheta)}\left|B_{i}\right|^{-\alpha_{i} u(1+\vartheta) /(2 n)}\right) \left(\sum_{i=k}^{\infty}\left|B_{i}\right|^{-\alpha_{i} (u(1+\vartheta))^{\prime} /(2 n)}\right)^{u(1+\vartheta) / (u(1+\vartheta))^{\prime}} \\
& \lesssim  \sup_{\vartheta>0 }\sup _{m_0<0, m_0\in  \mathbb{Z}} 2^{-m_0\Gamma u(1+\vartheta)} \epsilon  ^\theta \sum_{k=0}^{m_0} 2^{\alpha_{\infty} k u(1+\vartheta)}\left(\sum_{i=k}^{\infty}\left|\lambda_{i}\right|^{u(1+\vartheta)}\left|B_{i}\right|^{-\alpha_{i} u(1+\vartheta) /(2 n)}\right)\\
& = \sup_{\vartheta>0 }\sup _{m_0<0, m_0\in  \mathbb{Z}} 2^{-m_0\Gamma u(1+\vartheta)} \epsilon  ^\theta \sum_{i=0}^{m_0}\left|\lambda_{i}\right|^{u(1+\vartheta)} \sum_{k=0}^{i} 2^{(k-i) \alpha_{\infty} u(1+\vartheta) / 2}  +\sup_{\vartheta>0 }\sup _{m_0<0, m_0\in  \mathbb{Z}} 2^{-m_0\Gamma u(1+\vartheta)} \epsilon  ^\theta \sum_{i=m_0}^{\infty}\left|\lambda_{i}\right|^{u(1+\vartheta)} \sum_{k=0}^{m_0} 2^{(k-i) \alpha_{\infty} u(1+\vartheta) / 2} \\
& \lesssim  \sup_{\vartheta>0 }\sup _{m_0<0, m_0\in  \mathbb{Z}} 2^{-m_0\Gamma u(1+\vartheta)} \epsilon  ^\theta \sum_{i=0}^{m_0}\left|\lambda_{i}\right|^{u(1+\vartheta)}  +\sup_{\vartheta>0 }\sup _{m_0<0, m_0\in  \mathbb{Z}}  \epsilon  ^\theta \sum_{i=m_0}^{\infty} 2^{(i\Gamma  -m_0\Gamma) u(1+\vartheta)} 2^{-i\Gamma  u(1+\vartheta)} \sum_{i=-\infty}^{i}\left|\lambda_{i}\right|^{u(1+\vartheta)} \sum_{k=0}^{m_0} 2^{(k-i) \alpha_{\infty} u(1+\vartheta) / 2} \\
& \lesssim \Lambda+\Lambda \sup_{\vartheta>0 }\sup _{m_0<0, m_0\in  \mathbb{Z}}  \epsilon  ^\theta \sum_{i=m_0}^{\infty} 2^{(i-m_0) \Gamma u(1+\vartheta)} 2^{(L-j) \alpha_{\infty} u(1+\vartheta) / 2} \\
& \lesssim \Lambda+\Lambda \sup _{m_0<0, m_0\in  \mathbb{Z}}  \epsilon  ^\theta \sum_{i=m_0}^{\infty} 2^{(i-m_0) u(1+\vartheta)\left(\lambda-\alpha_{\infty} / 2\right)} \\
& \lesssim \Lambda .
\end{align*}

When $0<u(1+\vartheta) \leq  1$, by $n \delta_{2} \leq  \alpha(0), \alpha_{\infty}<s+\delta+n \delta_{2}$ we get

\begin{align*}
I I I_{2}= & \sup_{\vartheta>0 }\sup _{m_0<0, m_0\in  \mathbb{Z}} 2^{-m_0\Gamma u(1+\vartheta)} \epsilon  ^\theta \sum_{k=0}^{m_0} 2^{\alpha_{\infty} k u(1+\vartheta)}\left(\sum_{i=-\infty}^{k-1}\left|\lambda_{i}\right|\left\|\left(T b_{i}\right) \chi_{k}\right\|_{{p(\cdot)}}\right)^{u(1+\vartheta)} \\
\lesssim & \sup_{\vartheta>0 }\sup _{m_0<0, m_0\in  \mathbb{Z}} 2^{-m_0\Gamma u(1+\vartheta)} \epsilon  ^\theta  \sum_{k=0}^{m_0} 2^{\alpha_{\infty} k u(1+\vartheta)}\left(\sum_{i=-\infty}^{k-1}\left|\lambda_{i}\right|^{u(1+\vartheta)} 2^{\left[\left(s+\delta+n \delta_{2}\right)(i-k)-i \alpha_{i}\right] u(1+\vartheta)}\right) \\
= & \sup_{\vartheta>0 }\sup _{m_0<0, m_0\in  \mathbb{Z}} 2^{-m_0\Gamma u(1+\vartheta)} \epsilon  ^\theta\\
& \times \sum_{k=0}^{m_0} 2^{\alpha_{\infty} k u(1+\vartheta)}\left(\sum_{i=-\infty}^{-1}\left|\lambda_{i}\right|^{u(1+\vartheta)} 2^{\left[\left(s+\delta+n \delta_{2}\right)(i-k)-i \alpha(0)\right] u(1+\vartheta)}\right) \\
+& \sup_{\vartheta>0 }\sup _{m_0<0, m_0\in  \mathbb{Z}} 2^{-m_0\Gamma u(1+\vartheta)} \epsilon  ^\theta \\
& \times \sum_{k=0}^{m_0} 2^{\alpha_{\infty} k u(1+\vartheta)}\left(\sum_{i=0}^{k-1}\left|\lambda_{i}\right|^{u(1+\vartheta)} 2^{\left[\left(s+\delta+n \delta_{2}\right)(i-k)-i \alpha_{\infty}\right] u(1+\vartheta)}\right)\\
\lesssim & \sup_{\vartheta>0 }\sup _{m_0<0, m_0\in  \mathbb{Z}} 2^{-m_0\Gamma u(1+\vartheta)} \epsilon  ^\theta \sum_{k=0}^{m_0} 2^{\left[\alpha_{\infty}-\left(s+\delta+n \delta_{2}\right)\right] k u(1+\vartheta)} \\
& \times \sum_{i=-\infty}^{-1}\left|\lambda_{i}\right|^{u(1+\vartheta)} 2^{\left(s+\delta+n \delta_{2}-\alpha(0)\right) iu (1+\vartheta)} \\
+& \sup_{\vartheta>0 }\sup _{m_0<0, m_0\in  \mathbb{Z}} 2^{-m_0\Gamma u(1+\vartheta)} \epsilon  ^\theta \sum_{k=0}^{m_0}\left|\lambda_{i}\right|^{u(1+\vartheta)} \sum_{k=j+1}^{\infty} 2^{(i-k)\left(s+\delta+n \delta_{2}-\alpha_{\infty}\right) u(1+\vartheta)} \\
\lesssim & \sup_{\vartheta>0 }\sup _{m_0<0, m_0\in  \mathbb{Z}} 2^{-m_0\Gamma u(1+\vartheta)} \epsilon  ^\theta \sum_{i=-\infty}^{-1}\left|\lambda_{i}\right|^{u(1+\vartheta)}+\sup_{\vartheta>0 }\sup _{m_0<0, m_0\in  \mathbb{Z}} 2^{-m_0\Gamma u(1+\vartheta)} \epsilon  ^\theta \sum_{i=0}^{m_0-1}\left|\lambda_{i}\right|^{u(1+\vartheta)} \\
\lesssim & \Lambda .
\end{align*}

When $1<u(1+\vartheta)<\infty$ and since $n \delta_{2} \leq  \alpha(0)$, $\alpha_{\infty}<s+\delta+n \delta_{2}$, by H\"older's inequality, we have

\begin{align*}
I I I_{2}=& \sup_{\vartheta>0 }\sup _{m_0<0, m_0\in  \mathbb{Z}} 2^{-m_0\Gamma u(1+\vartheta)} \epsilon  ^\theta \sum_{k=0}^{m_0} 2^{\alpha_{\infty} k u(1+\vartheta)}\left(\sum_{i=-\infty}^{k-1}\left|\lambda_{i}\right|\left\|\left(T b_{i}\right) \chi_{k}\right\|_{{p(\cdot)}}\right)^{u(1+\vartheta)} \\
 \lesssim & \sup_{\vartheta>0 }\sup _{m_0<0, m_0\in  \mathbb{Z}} 2^{-m_0\Gamma u(1+\vartheta)} \epsilon  ^\theta \sum_{k=0}^{m_0} 2^{\alpha_{\infty} k u(1+\vartheta)}\left(\sum_{i=-\infty}^{k-1}\left|\lambda_{i}\right| 2^{\left[\left(s+\delta+n \delta_{2}\right)(i-k)-i \alpha_{i}\right]}\right)^{u(1+\vartheta)} \\
\lesssim & \sup_{\vartheta>0 }\sup _{m_0<0, m_0\in  \mathbb{Z}} 2^{-m_0\Gamma u(1+\vartheta)} \epsilon  ^\theta \sum_{k=0}^{m_0} 2^{\alpha_{\infty} k u(1+\vartheta)}\left(\sum_{i=-\infty}^{-1}\left|\lambda_{i}\right| 2^{\left[\left(s+\delta+n \delta_{2}\right)(i-k)-i \alpha(0)\right]}\right)^{u(1+\vartheta)} \\
 +&\sup_{\vartheta>0 }\sup _{m_0<0, m_0\in  \mathbb{Z}} 2^{-m_0\Gamma u(1+\vartheta)} \epsilon  ^\theta \sum_{k=0}^{m_0} 2^{\alpha_{\infty} k u(1+\vartheta)}\left(\sum_{i=0}^{k-1}\left|\lambda_{i}\right| 2^{\left[\left(s+\delta+n \delta_{2}\right)(i-k)-i \alpha_{\infty}\right]}\right)^{u(1+\vartheta)} \\
\lesssim & \sup_{\vartheta>0 }\sup _{m_0<0, m_0\in  \mathbb{Z}} 2^{-m_0\Gamma u(1+\vartheta)} \epsilon  ^\theta \sum_{k=0}^{m_0} 2^{\left[\alpha_{\infty}-\left(s+\delta+n \delta_{2}\right)\right] k u(1+\vartheta)} \\
 &\times \left(\sum_{i=-\infty}^{-1}\left|\lambda_{i}\right| 2^{\left(s+\delta+n \delta_{2}-\alpha(0)\right) j}\right)^{u(1+\vartheta)} \\
+& \sup_{\vartheta>0 }\sup _{m_0<0, m_0\in  \mathbb{Z}} 2^{-m_0\Gamma u(1+\vartheta)} \epsilon  ^\theta \sum_{k=0}^{m_0}\left(\sum_{i=0}^{k-1}\left|\lambda_{i}\right| 2^{(i-k)\left(s+\delta+n \delta_{2}-\alpha_{\infty}\right)}\right)^{u(1+\vartheta)} \\
\lesssim & \left(\sup_{\vartheta>0 }\sup _{m_0<0, m_0\in  \mathbb{Z}} 2^{-m_0\Gamma u(1+\vartheta)} \epsilon  ^\theta\sum_{i=-\infty}^{-1}\left|\lambda_{i}\right|^{u(1+\vartheta)} 2^{\left(s+\delta+n \delta_{2}-\alpha(0)\right) iu (1+\vartheta) / 2}\right) \\
 &\times\left(\sum_{i=-\infty}^{-1} 2^{\left(s+\delta+n \delta_{2}-\alpha(0)\right) i( u(1+\vartheta))^{\prime} / 2}\right)^{u(1+\vartheta) / (u(1+\vartheta))^{\prime}} \\
\lesssim & \sup_{\vartheta>0 }\sup _{m_0<0, m_0\in  \mathbb{Z}} 2^{-m_0\Gamma u(1+\vartheta)} \epsilon  ^\theta \sum_{k=0}^{m_0}\left(\sum_{i=0}^{k-1}\left|\lambda_{i}\right|^{u(1+\vartheta)} 2^{(i-k)\left(s+\delta+n \delta_{2}-\alpha_{\infty}\right) u(1+\vartheta) / 2}\right) \\
& \times\left(\sum_{i=0}^{k-1} 2^{(i-k)\left(s+\delta+n \delta_{2}-\alpha_{\infty}\right) (u(1+\vartheta))^{\prime} / 2}\right)^{u(1+\vartheta) / (u(1+\vartheta))^{\prime}}\\
\lesssim & \sup_{\vartheta>0 }\sup _{m_0<0, m_0\in  \mathbb{Z}} 2^{-m_0\Gamma u(1+\vartheta)} \epsilon  ^\theta \sum_{i=-\infty}^{-1}\left|\lambda_{i}\right|^{u(1+\vartheta)} 2^{\left(s+\delta+n \delta_{2}-\alpha(0)\right) iu (1+\vartheta) / 2} \\
& +\sup_{\vartheta>0 }\sup _{m_0<0, m_0\in  \mathbb{Z}} 2^{-m_0\Gamma u(1+\vartheta)} \epsilon  ^\theta \sum_{k=0}^{m_0} \sum_{i=0}^{k-1}\left|\lambda_{i}\right|^{u(1+\vartheta)} 2^{(i-k)\left(s+\delta+n \delta_{2}-\alpha_{\infty}\right) u(1+\vartheta) / 2} \\
\lesssim & \sup_{\vartheta>0 }\sup _{m_0<0, m_0\in  \mathbb{Z}} 2^{-m_0\Gamma u(1+\vartheta)} \epsilon  ^\theta \sum_{i=-\infty}^{-1}\left|\lambda_{i}\right|^{u(1+\vartheta)} \\
+& \sup_{\vartheta>0 }\sup _{m_0<0, m_0\in  \mathbb{Z}} 2^{-m_0\Gamma u(1+\vartheta)} \epsilon  ^\theta \sum_{i=0}^{m_0-1}\left|\lambda_{i}\right|^{q} \sum_{k=j+1}^{\ell } 2^{(i-k)\left(s+\delta+n \delta_{2}-\alpha_{\infty}\right) u(1+\vartheta) / 2} \\
\lesssim & \sup_{\vartheta>0 }\sup _{m_0<0, m_0\in  \mathbb{Z}} 2^{-m_0\Gamma u(1+\vartheta)} \epsilon  ^\theta \sum_{i=-\infty}^{-1}\left|\lambda_{i}\right|^{u(1+\vartheta)}+\sup_{\vartheta>0 }\sup _{m_0<0, m_0\in  \mathbb{Z}} 2^{-m_0\Gamma u(1+\vartheta)} \epsilon  ^\theta \sum_{i=0}^{m_0-1}\left|\lambda_{i}\right|^{u(1+\vartheta)}\\
\lesssim & \Lambda .
\end{align*}

Thus we have obtained our desired results. 
\end{proof}

\section{BMO estimate for the grand Herz-Morrey  Hardy spaces}
Let $\mathrm{S}^{n-1}$ denote the unit sphere in $\mathbb{R}^{n}(n \geq 2)$ equipped with normalized Lebesgue measure, and let $\Omega \in L^{s}\left(\mathrm{~S}^{n-1}\right), s \geq 1$, be a homogeneous function of degree zero. For $0<\sigma<n$, the homogeneous fractional integral operator $T_{\Omega, \sigma}$ is defined by

$$
T_{\Omega, \sigma} g(x)=\int_{\mathbb{R}^{n}} \frac{\Omega(x-y)}{|x-y|^{n-\sigma}} g(y) d y.
$$

Let $b$ be a locally integrable function, the commutator of a homogeneous fractional integral operator $T_{\Omega, \sigma}$, generated by $b$ and denoted by $\left[b, T_{\Omega, \sigma}\right]$, is defined by

$$
\left[b, T_{\Omega, \sigma}\right] g(x)=\int_{\mathbb{R}^{n}} \frac{\Omega(x-y)}{|x-y|^{n-\sigma}}(b(x)-b(y)) g(y) d y
$$

In this paper we study the boundedness of the commutator $\left[b, T_{\Omega, \sigma}\right]$ for a homogeneous fractional integral operator $T_{\Omega, \sigma}$ on the grand Herz-Morrey  Hardy spaces.
 
Let us first recall that the space $\operatorname{BMO}\left(\mathbb{R}^{n}\right)$ consists of all locally integrable functions $f$ such that

$$
\left\| g\right\|_{*}=\sup _{Q} \frac{1}{|Q|} \int_{Q}\left|g(x)-g_{Q}\right| d x<\infty
$$
where $f_{Q}=|Q|^{-1} \int_{Q} g(y) d y$, the supremum is taken over all cubes $Q \subset \mathbb{R}^{n}$ with sides parallel to the coordinate axes, and $|Q|$ denotes the Lebesgue measure of $Q$.

A nonnegative locally integrable function $\omega(x)$ on $\mathbb{R}^{n}$ is said to belong to the class $A(p, q)(1<p, q<\infty)$, if there is a constant $C>0$ such that

$$
\sup _{Q}\left(\frac{1}{|Q|} \int_{Q} \omega(x)^{q} d x\right)^{1 / q}\left(\frac{1}{|Q|} \int_{Q} \omega(x)^{-p^{\prime}} d x\right)^{1 / p^{\prime}} \leq C<\infty
$$

where $p^{\prime}=p /(p-1)$. Also, we will say that $\omega \in A_{1}$ if $M \omega(x) \leq C \omega(x)$ for a.e. $x$.

\begin{lem}\cite{ding}
     Suppose that $0<\sigma<n, s^{\prime}<p<n / \sigma, 1 / q=1 / p-\sigma / n, \Omega \in L^{s}\left(\mathrm{~S}^{n-1}\right)$, and $\omega^{s^{\prime}} \in$ $A\left(p / s^{\prime}, q / s^{\prime}\right)$. Then for $b \in \operatorname{BMO}\left(\mathbb{R}^{n}\right)$ and $m \in \mathbb{Z}$ there is a constant $C$, independent of $f$, such that

$$
\left(\int_{\mathbb{R}^{n}}\left|\left[b, T_{\Omega, \sigma}\right]^{m} g(x) \omega(x)\right|^{q} d x\right)^{1 / q} \leq C\left(\int_{\mathbb{R}^{n}}|g(x) \omega(x)|^{p} d x\right)^{1 / p}.
$$
\end{lem}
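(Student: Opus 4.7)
The plan is to reduce the weighted boundedness of the $m$-th order commutator $[b,T_{\Omega,\sigma}]^m$ to the known weighted boundedness of the bare fractional integral $T_{\Omega,\sigma}$ via a sharp maximal function argument, exploiting the BMO character of $b$ through the John--Nirenberg inequality. First I would record the needed building blocks: the Muckenhoupt--Wheeden type weighted $(L^p(\omega^p),L^q(\omega^q))$ inequality for $T_{\Omega,\sigma}$ under the hypothesis $\omega^{s'}\in A(p/s',q/s')$ (which, via the rough kernel bound $|\Omega(x-y)|\le \|\Omega\|_{L^s(S^{n-1})}$ in the angular variable and H\"older's inequality, is the right substitute of the classical $A(p,q)$ assumption); the John--Nirenberg inequality, giving $\|(b-b_Q)^m\|_{\exp L^{1/m},Q}\lesssim\|b\|_*^m$; and the Fefferman--Stein weighted sharp-function inequality $\|f\|_{L^q(w)}\lesssim\|M^{\sharp}f\|_{L^q(w)}$ valid for $w\in A_\infty$ (here $\omega^q$ lies in $A_\infty$ because $\omega^{s'}\in A(p/s',q/s')$ forces $\omega^q\in A_{\infty}$).

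The main step would then be to establish the pointwise sharp maximal estimate
\begin{equation*}
M^{\sharp}\bigl([b,T_{\Omega,\sigma}]^m g\bigr)(x)\;\lesssim\;\|b\|_*^{m}\Bigl(M_{\sigma,r}g(x)+\sum_{j=0}^{m-1}M_r\bigl([b,T_{\Omega,\sigma}]^j g\bigr)(x)\Bigr),
\end{equation*}
for some $r>1$ close to $s'$, where $M_{\sigma,r}g=[M(|g|^r)]^{1/r}$ is a fractional maximal variant adapted to $\sigma$ and $\Omega$. This is obtained in the standard way: for a cube $Q\ni x$, split $b-b_Q$ in the binomial expansion of $(b(x)-b(y))^m$, decompose $g=g\chi_{2Q}+g\chi_{(2Q)^c}$, and estimate the inner integral over $2Q$ via the $L^r$-boundedness of $T_{\Omega,\sigma}$ on $L^r$-type tails, while the outer integral is controlled using the mean value property of $\Omega(x-y)/|x-y|^{n-\sigma}$ together with Kolmogorov's inequality to absorb the $\exp L$ factor coming from $(b-b_Q)^k$.

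The inductive closure proceeds on $m$: assuming the weighted inequality for commutators of order $<m$, one applies $M^{\sharp}$-control plus Fefferman--Stein to bound $\|[b,T_{\Omega,\sigma}]^m g\|_{L^q(\omega^q)}$ by $\|b\|_*^m$ times the sum over $j<m$ of similar norms plus $\|M_{\sigma,r}g\|_{L^q(\omega^q)}$; the latter is handled by the weighted boundedness of the fractional maximal function under $\omega^{s'}\in A(p/s',q/s')$ (equivalent to $\omega^{rq/r}$ in the appropriate class for the right choice of $r$ with $s'<r<p$). The base case $m=0$ is exactly the weighted inequality for $T_{\Omega,\sigma}$ itself.

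The principal obstacle is the simultaneous presence of the rough kernel $\Omega\in L^s(S^{n-1})$ and the BMO factor: one cannot separate them by a crude $L^\infty$ bound on $\Omega$, so the choice of the auxiliary exponent $r$ in the sharp maximal estimate must be calibrated so that $s'<r<p$ while keeping $\omega^{s'}\in A(p/s',q/s')$ strong enough to imply $\omega^r\in A(p/r,q/r)$. Once this calibration is secured, the induction and Fefferman--Stein closure are essentially routine. Since the statement is cited as Lemma~4.1 from \cite{ding}, I would simply quote that reference rather than reproduce the full argument.
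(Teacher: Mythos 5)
The paper offers no proof of this statement: it is imported verbatim as a cited lemma from \cite{ding}, so there is no internal argument to compare yours against, and your concluding decision to simply quote that reference is exactly what the authors do. Your reconstruction of how the result is actually proved --- weighted $(L^p(\omega^p),L^q(\omega^q))$ bounds for $T_{\Omega,\sigma}$ as the base case, a pointwise sharp maximal estimate controlling $M^{\sharp}\bigl([b,T_{\Omega,\sigma}]^m g\bigr)$ by $M_{\sigma,r}g$ and lower-order commutators, John--Nirenberg to handle the $(b-b_Q)^k$ factors, and Fefferman--Stein plus induction on $m$ to close --- is the standard route for such results and is plausible in outline. Two points in the sketch are loose and would need repair in a full write-up: first, a rough kernel $\Omega\in L^{s}(\mathrm{S}^{n-1})$ has no ``mean value property''; the cancellation in the off-diagonal comparison comes only from the smooth factor $|x-y|^{\sigma-n}$ (which suffices here because $\sigma>0$ makes the kernel locally integrable), with $\Omega$ handled by H\"older over dyadic annuli against $\|\Omega\|_{L^{s}}$. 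Second, the coefficient of $M_r\bigl([b,T_{\Omega,\sigma}]^j g\bigr)$ in the sharp maximal estimate should be $\|b\|_*^{m-j}$ rather than $\|b\|_*^{m}$, and the Fefferman--Stein step needs an a priori finiteness or truncation argument. You might also note that the hypothesis ``$m\in\mathbb{Z}$'' in the statement is presumably a typo for $m\in\mathbb{N}$, since negative-order iterated commutators are not defined.
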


\begin{lem} \cite{uribe}. Given a family $\mathcal{F}$ and an open set $E \subset \mathbb{R}^{n}$. Assume that for some $p_{0}$ and $q_{0}$, $0<p_{0} \leq q_{0}<\infty$, and every weight $\omega \in A_{1}$,

$$
\left(\int_{E} g(x)^{q_{0}} \omega(x) d x\right)^{1 / q_{0}} \leq C_{0}\left(\int_{E} g(x)^{p_{0}} \omega(x)^{p_{0} / q_{0}} d x\right)^{1 / p_{0}}, \quad(f, g) \in \mathcal{F}.
$$
\end{lem}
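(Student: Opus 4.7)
Since the lemma as typed terminates with only its hypothesis, I interpret the intended statement as the standard off-diagonal Rubio de Francia extrapolation theorem of Cruz-Uribe–Martell–P\'erez type: from the weighted pointwise-in-$(f,g)$ estimate with $A_1$ weights, one concludes that for every pair $(p(\cdot), q(\cdot)) \in \mathcal{P}^{\log}(E) \times \mathcal{P}^{\log}(E)$ satisfying $p_0 \leq p^-(E)$, $p^+(E) < \infty$, $\tfrac{1}{p(\cdot)} - \tfrac{1}{q(\cdot)} = \tfrac{1}{p_0} - \tfrac{1}{q_0}$, and a suitable maximal-function hypothesis (e.g.\ $(q(\cdot)/q_0)' \in \mathcal{B}(E)$), one has $\|g\|_{L^{q(\cdot)}(E)} \leq C\|f\|_{L^{p(\cdot)}(E)}$ for all $(f,g) \in \mathcal{F}$. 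The plan is to prove this via the Rubio de Francia iteration method.

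The first move is duality on the left: write
\begin{equation*}
\|g\|_{L^{q(\cdot)}(E)}^{q_0} = \bigl\|g^{q_0}\bigr\|_{L^{q(\cdot)/q_0}(E)} \approx \sup_h \int_E g^{q_0} h\, dx,
\end{equation*}
where the supremum is over nonnegative $h$ with $\|h\|_{L^{(q(\cdot)/q_0)'}(E)} \leq 1$. Fix such $h$ and construct the Rubio de Francia operator
\begin{equation*}
\mathcal{R}h := \sum_{k=0}^{\infty} \frac{M^k h}{\bigl(2\|M\|_{L^{(q(\cdot)/q_0)'}(E)}\bigr)^k},
\end{equation*}
legitimized by the maximal-function hypothesis. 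Then $h \leq \mathcal{R}h$ pointwise, $\|\mathcal{R}h\|_{L^{(q(\cdot)/q_0)'}(E)} \leq 2$, and $M(\mathcal{R}h) \leq 2\|M\|\,\mathcal{R}h$; in particular $\mathcal{R}h \in A_1$ with an $A_1$ constant independent of $h$.

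Next, apply the weighted hypothesis with $\omega := (\mathcal{R}h)^{1/q_0}$, so that $\omega^{q_0} = \mathcal{R}h$ and $\omega^{p_0} = (\mathcal{R}h)^{p_0/q_0}$; this yields
\begin{equation*}
\int_E g^{q_0}\, \mathcal{R}h\, dx \leq C_0^{q_0} \left(\int_E f^{p_0} (\mathcal{R}h)^{p_0/q_0}\, dx\right)^{q_0/p_0}.
\end{equation*}
Apply the variable exponent H\"older inequality with exponents $p(\cdot)/p_0$ and $(p(\cdot)/p_0)'$ to the integrand $f^{p_0} \cdot (\mathcal{R}h)^{p_0/q_0}$ on the right; the algebraic identity $\tfrac{p_0}{q_0}(p(\cdot)/p_0)' = (q(\cdot)/q_0)'$, which is forced by $\tfrac{1}{p}-\tfrac{1}{q}=\tfrac{1}{p_0}-\tfrac{1}{q_0}$, allows us to recognize the weight factor as $\|\mathcal{R}h\|_{L^{(q(\cdot)/q_0)'}(E)}^{p_0/q_0} \leq 2^{p_0/q_0}$. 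Taking the $q_0$-th root and passing to the supremum over $h$ closes the inequality.

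The principal obstacle, and what distinguishes this from a routine extrapolation, is verifying the exponent-bookkeeping identity $\tfrac{p_0}{q_0}(p(\cdot)/p_0)' = (q(\cdot)/q_0)'$ under the off-diagonal constraint; once that is done, the rest is mechanical Rubio de Francia iteration as in \cite{uribe}. A secondary technical point is the duality characterization of the variable Lebesgue norm on $L^{q(\cdot)/q_0}(E)$, which requires the log-H\"older assumption on $q(\cdot)$ to guarantee that $L^{q(\cdot)/q_0}(E)$ is a Banach function space with associate space $L^{(q(\cdot)/q_0)'}(E)$; this also underlies the maximal-function hypothesis that makes the Rubio de Francia series converge in the first place.
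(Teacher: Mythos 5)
You correctly diagnosed that the lemma as printed is defective: it states only the hypothesis (and the displayed inequality has $g$ on both sides where the right-hand side should involve $f$, since $\mathcal{F}$ is a family of pairs $(f,g)$), with the conclusion omitted entirely. The paper offers no proof of this statement at all — it is quoted from \cite{uribe} and used as a black box to obtain the $(L^{p_1(\cdot)},L^{p_2(\cdot)})$-boundedness of $[b,T_{\Omega,\sigma}]$ — so there is no internal argument to compare against. Your reconstruction of the intended conclusion (off-diagonal extrapolation into variable Lebesgue spaces) and your proof sketch are precisely the argument of the cited reference: dualize $\|g^{q_0}\|_{L^{q(\cdot)/q_0}(E)}$, build the Rubio de Francia majorant $\mathcal{R}h=\sum_{k}M^{k}h/(2\|M\|)^{k}$ to obtain an $A_1$ weight dominating $h$ with uniformly controlled norm, apply the weighted hypothesis, and close with the generalized H\"older inequality via the exponent identity $\tfrac{p_0}{q_0}\bigl(p(\cdot)/p_0\bigr)'=\bigl(q(\cdot)/q_0\bigr)'$, which, as you correctly verify, is equivalent to the off-diagonal relation $\tfrac1{p(\cdot)}-\tfrac1{q(\cdot)}=\tfrac1{p_0}-\tfrac1{q_0}$. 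This is sound.

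One bookkeeping slip worth flagging: with the hypothesis normalized as in the paper's statement ($\int_E g^{q_0}\omega\,dx$ on the left and $\omega^{p_0/q_0}$ on the right, $\omega\in A_1$), the correct substitution is $\omega:=\mathcal{R}h$, not $\omega:=(\mathcal{R}h)^{1/q_0}$; your choice would be appropriate only if the hypothesis carried $\omega^{q_0}$ on the left (the $A(p_0,q_0)$-style normalization). The displayed inequality you then write down is nevertheless the one obtained from the correct substitution, so nothing downstream of it is affected.
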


Using Lemmas 2.1 and 2.2, and arguments similar to those applied in the proof of Theorem 1.9 of [15], we can prove the $\left(L^{p(\cdot)}\left(\mathbb{R}^{n}\right), L^{q(\cdot)}\left(\mathbb{R}^{n}\right)\right)$-boundedness of the commutator $\left[b, T_{\Omega, \sigma}\right]$.

Next we  recall the definition of the $L^{s}$-Dini condition. We say that a function $\Omega$ satisfies the $L^{s}$-Dini condition if $\Omega \in L^{s}\left(\mathrm{~S}^{n-1}\right)$ with $s \geq 1$ is homogeneous of degree zero on $\mathbb{R}^{n}$, and $\int_{0}^{1} \omega_{s}(\delta) / \delta d \delta<\infty$, where $\omega_{s}(\delta)$ denotes the integral modulus of continuity of order $s$ of $\Omega$ defined by

$$
\omega_{s}(\delta)=\sup _{|\rho|<\delta}\left(\int_{\mathrm{S}^{n-1}}\left|\Omega\left(\rho x^{\prime}\right)-\Omega\left(\rho x^{\prime}\right)\right|^{s} d x^{\prime}\right)^{1 / s}
$$

and $\rho$ is a rotation in $\mathbb{R}^{n}$ with $|\rho|=\|\rho-I\|$.

\begin{lem}\cite{tan} Let $p(\cdot) \in \mathcal{B}\left(\mathbb{R}^{n}\right)$, $k$ be a positive integer and $B$ be a ball in $\mathbb{R}^{n}$. Then for all $b \in \operatorname{BMO}\left(\mathbb{R}^{n}\right)$ and all $j, i \in \mathbb{Z}$ with $j>i$, we have

$$
\begin{gathered}
\frac{1}{C}\|b\|_{*}^{t_0} \leq \sup _{B} \frac{1}{\left\|\chi_{B}\right\|_{L^{p \cdot \cdot}\left(\mathbb{R}^{n}\right)}}\left\|\left(b-b_{B}\right)^{t_0} \chi_{B}\right\|_{L^{p(\cdot)}\left(\mathbb{R}^{n}\right)} \leq C\|b\|_{*}^{t_0} \\
\left\|\left(b-b_{B_{i}}\right)^{t_0} \chi_{B_{j}}\right\|_{L^{p(\cdot)}\left(\mathbb{R}^{n}\right)} \leq C(j-i)^{t_0}\|b\|_{*}^{t_0}\left\|\chi_{B_{j}}\right\|_{L^{p(\cdot)}\left(\mathbb{R}^{n}\right)}
\end{gathered}
$$

where $B_{i}=\left\{x \in \mathbb{R}^{n}:|x| \leq 2^{i}\right\}$ and $B_{j}=\left\{x \in \mathbb{R}^{n}:|x| \leq 2^{j}\right\}$.
\end{lem}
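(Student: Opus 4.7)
The plan is to prove the two bounds separately, relying on the John--Nirenberg inequality and the generalized Hölder inequality in variable Lebesgue spaces. Throughout, $t_0$ denotes the positive integer $k$ in the hypothesis, so $(b-b_B)^{t_0}$ means $|b(x)-b_B|^{t_0}$.

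For the first equivalence, I would first establish the upper bound. The John--Nirenberg inequality guarantees constants $c_1,c_2>0$ such that, for every ball $B$ and every $\lambda>0$,
\[
\bigl|\{x\in B:|b(x)-b_B|>\lambda\}\bigr|\leq c_1 |B|\exp\!\bigl(-c_2\lambda/\|b\|_*\bigr).
\]
Raising to the power $t_0$ and using the layer-cake representation, $|b-b_B|^{t_0}\chi_B$ decays exponentially, so for any $\eta>C\|b\|_*^{t_0}$ with $C$ large enough the modular $\int_B (|b(x)-b_B|^{t_0}/\eta)^{p(x)}\,dx$ is controlled by a constant multiple of $|B|$. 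By the definition of the Luxemburg norm and the normalization identity $\|\chi_B\|_{L^{p(\cdot)}}\|\chi_B\|_{L^{p'(\cdot)}}\approx |B|$ (Lemma 1.2), this yields $\|(b-b_B)^{t_0}\chi_B\|_{p(\cdot)}\lesssim \|b\|_*^{t_0}\|\chi_B\|_{p(\cdot)}$. For the lower bound, I would use the generalized Hölder inequality (Lemma 1.1) in the form
\[
\frac{1}{|B|}\int_B |b(x)-b_B|\,dx\leq \frac{r_p}{|B|}\bigl\|(b-b_B)\chi_B\bigr\|_{p(\cdot)}\|\chi_B\|_{p'(\cdot)},
\]
together with the elementary bound $\|b\|_*\lesssim \sup_B|B|^{-1}\int_B |b-b_B|^{t_0}\,dx)^{1/t_0}$ (another consequence of John--Nirenberg), again invoking Lemma 1.2 to convert the $p'(\cdot)$-norm of $\chi_B$ into $|B|/\|\chi_B\|_{p(\cdot)}$.

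For the second inequality, I would run the standard telescoping argument. Writing $b_{B_\ell}=|B_\ell|^{-1}\int_{B_\ell}b$, a direct computation gives $|b_{B_\ell}-b_{B_{\ell+1}}|\leq 2^n \|b\|_*$, hence
\[
|b_{B_i}-b_{B_j}|\leq \sum_{\ell=i}^{j-1}|b_{B_\ell}-b_{B_{\ell+1}}|\leq C(j-i)\|b\|_*.
\]
Combining with the pointwise bound $|b(x)-b_{B_i}|^{t_0}\lesssim |b(x)-b_{B_j}|^{t_0}+|b_{B_j}-b_{B_i}|^{t_0}$ and the triangle inequality of the $L^{p(\cdot)}$-norm gives
\[
\bigl\|(b-b_{B_i})^{t_0}\chi_{B_j}\bigr\|_{p(\cdot)}\lesssim \bigl\|(b-b_{B_j})^{t_0}\chi_{B_j}\bigr\|_{p(\cdot)}+C(j-i)^{t_0}\|b\|_*^{t_0}\|\chi_{B_j}\|_{p(\cdot)}.
\]
The first term on the right is estimated by the upper bound already proved in the first part, and both contributions are controlled by $C(j-i)^{t_0}\|b\|_*^{t_0}\|\chi_{B_j}\|_{p(\cdot)}$ since $(j-i)^{t_0}\geq 1$.

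The main technical obstacle is the first (upper) bound: one has to pass from the pointwise exponential decay supplied by John--Nirenberg to control of the Luxemburg modular with a variable exponent. The non-constancy of $p(\cdot)$ means the naive estimate $\int_B e^{-c_2 t_0^{-1}\lambda/\|b\|_*}\,d\lambda$ must be upgraded using both $p^-$ and $p^+$, and care is needed because the level sets come with weights $p(x)$ that vary over $B$. The standard device is to split the modular into the region where $|b(x)-b_B|\leq\|b\|_*$ (handled trivially by $p^+$) and its complement (handled by $p^-$ together with the exponential decay), so that the total modular is bounded by $c|B|$ for an appropriate choice of the normalization constant $\eta\approx \|b\|_*^{t_0}$. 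Once this is carried out carefully, the remaining steps are essentially bookkeeping.
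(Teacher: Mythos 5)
The paper does not prove this lemma at all; it is quoted verbatim from the cited reference of Tan and Liu, so there is no in-paper argument to compare against. Judged on its own merits, your outline has the right architecture — the lower bound via the generalized H\"older inequality together with Lemma 1.2, and the second inequality via the telescoping estimate $|b_{B_i}-b_{B_j}|\leq\sum_{\ell=i}^{j-1}|b_{B_\ell}-b_{B_{\ell+1}}|\leq C(j-i)\|b\|_*$ combined with the first part, are both correct and are exactly how this is done in the literature.

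The genuine gap is in the upper bound of the first display, which you yourself flag as the main obstacle but do not actually close. Your plan is to choose $\eta\approx\|b\|_*^{t_0}$, show that the modular $\int_B(|b-b_B|^{t_0}/\eta)^{p(x)}\,dx$ is at most $c|B|$ (which does follow from John--Nirenberg by splitting at the level $\|b\|_*$ and using $p^\pm$), and then convert this into $\|(b-b_B)^{t_0}\chi_B\|_{p(\cdot)}\lesssim\eta\|\chi_B\|_{p(\cdot)}$ via Lemma 1.2. That last conversion is invalid for general $p(\cdot)\in\mathcal{B}(\mathbb{R}^n)$: a modular bound $\rho(g)\leq\Lambda$ with $\Lambda\geq 1$ only yields $\|g\|_{p(\cdot)}\leq\Lambda^{1/p^-}$, and $(c|B|)^{1/p^-}$ is not comparable to $\|\chi_B\|_{p(\cdot)}$ when $|B|$ is large and $p^-<p^+$ (one only has $|B|^{1/p^+}\leq\|\chi_B\|_{p(\cdot)}\leq|B|^{1/p^-}$ for $|B|\geq 1$). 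The identity $\|\chi_B\|_{p(\cdot)}\|\chi_B\|_{p'(\cdot)}\approx|B|$ does not repair this; what you would need is the pointwise normalization $\|\chi_B\|_{p(\cdot)}^{p(x)}\approx|B|$ for $x\in B$, which holds for globally log-H\"older continuous exponents (Lemma 4.6 of the paper) but is not a consequence of $p(\cdot)\in\mathcal{B}(\mathbb{R}^n)$ alone, which is the hypothesis of the lemma. The standard way to close the gap at this level of generality is by duality: test $|b-b_B|^{t_0}\chi_B$ against $g\in L^{p'(\cdot)}$ with $\|g\|_{p'(\cdot)}\leq 1$, run the Rubio de Francia iteration $Rg=\sum_k M^kg/(2\|M\|_{L^{p'(\cdot)}})^k$ to produce an $A_1$ majorant of $|g|$ with $\|Rg\|_{p'(\cdot)}\lesssim 1$, and invoke the weighted John--Nirenberg estimate $\int_B|b-b_B|^{t_0}w\,dx\lesssim\|b\|_*^{t_0}w(B)$ for $w\in A_1$, finishing with $w(B)\leq\|\chi_B\|_{p(\cdot)}\|Rg\|_{p'(\cdot)}$. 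Either adopt that route or add the log-H\"older hypothesis and justify the modular-to-norm passage through the pointwise normalization.
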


\begin{lem} \cite{uribe1} Given a set $E$ and a function $p(\cdot) \in \mathcal{P}(E)$, let $f: E \times E \rightarrow \mathbb{R}$ be a measurable function (with respect to product measure), such that $g(\cdot, y) \in L^{p(\cdot)}(E)$ for almost every $y \in E$. Then the following inequality holds:

$$
\left\|\int_{E} g(\cdot, y) d y\right\|_{L^{p(\cdot)}(E)} \leq C \int_{E}\|g(\cdot, y)\|_{L^{p(\cdot)}(E)} d y
$$
\end{lem}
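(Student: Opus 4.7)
The plan is to prove this by duality together with Fubini--Tonelli and the generalized H\"older inequality already recorded in the paper (Lemma 1.1). Under the standing assumption $p(\cdot)\in \mathcal{P}(E)$ with $1<p^-\le p^+<\infty$, the variable Lebesgue norm admits the equivalent associate-space representation
$$\|F\|_{L^{p(\cdot)}(E)}\approx \sup\Bigl\{\Bigl|\int_E F(x)\,h(x)\,dx\Bigr|:\ h\in L^{p'(\cdot)}(E),\ \|h\|_{L^{p'(\cdot)}(E)}\le 1\Bigr\}.$$
Applying this to $F(x):=\int_E g(x,y)\,dy$ converts the target estimate into a bound on a bilinear pairing, which is the convenient form for attacking the problem.

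The core of the argument is then a swap-and-H\"older step. For $h$ with $\|h\|_{L^{p'(\cdot)}(E)}\le 1$, I would like to write
$$\int_E\Bigl(\int_E g(x,y)\,dy\Bigr)h(x)\,dx=\int_E\Bigl(\int_E g(x,y)\,h(x)\,dx\Bigr)dy,$$
and then, for each fixed $y$, apply Lemma 1.1 to bound the inner integral by
$$\Bigl|\int_E g(x,y)\,h(x)\,dx\Bigr|\le r_p\,\|g(\cdot,y)\|_{L^{p(\cdot)}(E)}\,\|h\|_{L^{p'(\cdot)}(E)}\le r_p\,\|g(\cdot,y)\|_{L^{p(\cdot)}(E)}.$$
Integrating in $y$ and taking the supremum over admissible $h$ gives the claim, with the constant $C$ being $r_p$ times the equivalence constant from the duality.

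The main obstacle is justifying the interchange of integrals. This is handled in two stages. First, if the right-hand side of the lemma is infinite the inequality is trivially true, so one may assume $\int_E\|g(\cdot,y)\|_{L^{p(\cdot)}}\,dy<\infty$. Second, one reduces to $g\ge 0$ by splitting into positive and negative parts, applies Tonelli (which requires only joint measurability, provided by hypothesis), and then reinstates $h$ by noting that the slice-wise H\"older bound above already gives absolute integrability of $g(x,y)h(x)$ over $E\times E$, so Fubini applies. Once the swap is legitimized, the chain of inequalities is immediate and the constant $C$ depends only on $p^-$ and $p^+$.
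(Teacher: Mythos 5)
The paper does not prove this lemma at all: it is quoted verbatim from the cited reference (Cruz--Uribe and Fiorenza), where the proof is exactly the duality argument you give. Your proposal is correct: the associate-norm representation of $\|\cdot\|_{L^{p(\cdot)}}$ is valid for $p(\cdot)\in\mathcal{P}(E)$ with constants depending only on $p^{\pm}$, the Tonelli/Fubini interchange is legitimately reduced to the nonnegative case (and the absolute integrability of $g(x,y)h(x)$ on $E\times E$ follows from the slice-wise H\"older bound once the right-hand side is assumed finite), and Lemma 1.1 supplies the inner estimate, so the constant $C=r_p$ times the duality constant is as you claim. The only point worth recording explicitly is that $\int_E g(x,y)\,dy$ is well defined for a.e.\ $x$; this follows by first applying the nonnegative case to $|g|$, which shows $\int_E|g(\cdot,y)|\,dy\in L^{p(\cdot)}(E)$ and hence is finite a.e., after which the general case follows by monotonicity of the norm.
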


\begin{lem}\cite{uribe1} Let $\tilde{q}(\cdot)$ be a variable exponent defined by $\frac{1}{p(x)}=\frac{1}{\tilde{q}(x)}+\frac{1}{q} \quad\left(x \in \mathbb{R}^{n}\right)$. Then for all measurable functions $f$ and $g$ we have

$$
\|f g\|_{L^{p(\cdot)}} \leq C\left\| g\right\|_{L^{\bar{q}(\cdot)}}\|f g\|_{L^{q}}
$$
\end{lem}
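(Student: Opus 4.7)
The plan is to derive the inequality from the modular characterization of the Luxemburg--Nakano norm, combined with a pointwise Young-type inequality that exploits the fact that the relation $1/p(x)=1/\tilde q(x)+1/q$ provides a conjugate pair of exponents $\tilde q(x)/p(x)$ and $q/p(x)$ at every point.

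First I would reduce to the normalized case $\|f\|_{L^q}=1$ and $\|g\|_{L^{\tilde q(\cdot)}}=1$, which is legitimate because both sides of the desired inequality are homogeneous in each of the arguments $f,g$ separately. Having done so, the task becomes to estimate the modular $\rho_{p(\cdot)}(fg):=\int_{\mathbb{R}^n}|f(x)g(x)|^{p(x)}\,dx$ by a finite constant depending only on $p^{\pm}$ and $q$.

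The pointwise step is to apply Young's inequality $AB\le A^{r}/r+B^{s}/s$ with $r=q/p(x)$ and $s=\tilde q(x)/p(x)$, which are conjugate by the identity $p(x)/q+p(x)/\tilde q(x)=1$ that follows from the hypothesis. Taking $A=|f(x)|^{p(x)}$ and $B=|g(x)|^{p(x)}$ yields the pointwise bound
\[
|f(x)g(x)|^{p(x)}\ \le\ \frac{p(x)}{q}|f(x)|^{q}+\frac{p(x)}{\tilde q(x)}|g(x)|^{\tilde q(x)}.
\]
Integrating over $\mathbb{R}^n$ and using the crude estimates $p(x)/q\le p^{+}/q$ and $p(x)/\tilde q(x)\le 1$, together with the normalization, gives a uniform bound $\rho_{p(\cdot)}(fg)\le p^{+}/q+1$.

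The final step is to translate this modular bound back into a norm bound through the standard scaling relation $\rho_{p(\cdot)}(h/\lambda)\le 1\Rightarrow \|h\|_{L^{p(\cdot)}}\le \lambda$. Choosing $\lambda$ large enough to absorb the factor $p^{+}/q+1$ (via the inequality $\rho_{p(\cdot)}(h/\lambda)\le \lambda^{-p^{-}}\rho_{p(\cdot)}(h)$ when $\lambda\ge 1$) yields $\|fg\|_{L^{p(\cdot)}}\le C$ in the normalized situation, and undoing the scaling recovers the stated estimate. The main obstacle, and essentially the only point requiring genuine care, is this last book-keeping of constants: since $\rho_{p(\cdot)}$ is not homogeneous of a single degree when $p(\cdot)$ varies, one has to choose between $p^{-}$ and $p^{+}$ when passing between modular and norm, and this must be done consistently to make $C$ independent of $f$ and $g$.
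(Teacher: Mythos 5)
The paper does not prove this lemma at all: it is quoted verbatim from the cited reference (Cruz--Uribe and Fiorenza), so there is no in-paper argument to compare against. Your proof is the standard one from that reference and it is correct. You also (implicitly and correctly) repaired the typo in the statement, reading the right-hand side as $\|f\|_{L^{q}}\,\|g\|_{L^{\tilde q(\cdot)}}$ rather than the printed $\|fg\|_{L^{q}}$; with the factor $\|fg\|_{L^q}$ the inequality is not the intended H\"older estimate and is not what is used later in the paper. The chain of steps is sound: homogeneity justifies the normalization $\|f\|_{L^q}=\|g\|_{L^{\tilde q(\cdot)}}=1$ (after discarding the trivial cases of zero or infinite norms); the exponents $q/p(x)$ and $\tilde q(x)/p(x)$ are conjugate precisely because $p(x)/q+p(x)/\tilde q(x)=1$; Young's inequality then gives the pointwise bound, and the unit-ball property $\|g\|_{L^{\tilde q(\cdot)}}\le 1\Rightarrow\rho_{\tilde q(\cdot)}(g)\le 1$ turns the integrated estimate into $\rho_{p(\cdot)}(fg)\le p^{+}/q+1$; finally the scaling $\rho_{p(\cdot)}(h/\lambda)\le\lambda^{-p^{-}}\rho_{p(\cdot)}(h)$ for $\lambda\ge 1$ converts the modular bound into the norm bound with $C=\max\{1,(p^{+}/q+1)^{1/p^{-}}\}$. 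The only points left tacit are the degenerate cases $p(x)=q$ (forcing $\tilde q(x)=\infty$) and $q=\infty$, where the conjugate pair collapses; under the paper's standing assumptions (exponents valued in $[1,\infty)$ with $p^{+}<\infty$, and in the application $q=s>p_2^{+}$) these do not occur, so no repair is needed.
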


\begin{lem}\cite{ref6} Let $p(\cdot) \in \mathcal{P}\left(\mathbb{R}^{n}\right)$ satisfy the conditions (1.1) and (1.2) of Lemma 1.1. Then for every cube (or ball) $Q \subset \mathbb{R}^{n}$ we have

$$
\left\|\chi_{Q}\right\|_{L^{p(\cdot)}} \approx \begin{cases}|Q|^{\frac{1}{p(x)}} & \text { if }|Q| \leq 2^{n} \text { and } x \in Q \\ |Q|^{\frac{1}{p(\infty)}} & \text { if }|Q| \geq 1\end{cases}
$$

where $p(\infty)=\lim _{x \rightarrow \infty} p(x)$.
\end{lem}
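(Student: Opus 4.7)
The plan is to work directly with the Luxemburg--Nakano definition $\|\chi_Q\|_{L^{p(\cdot)}}=\inf\{\eta>0:\int_Q \eta^{-p(y)}\,dy\leq 1\}$, since the test function is simple enough that one can essentially solve for the threshold $\eta$. Set $\rho(\eta):=\int_Q \eta^{-p(y)}\,dy$; because $1\leq p^-\leq p(y)\leq p^+<\infty$, $\rho$ is continuous and strictly decreasing on $(0,\infty)$, so pinning down the value of $\eta$ at which $\rho(\eta)\asymp 1$ determines the norm up to constants depending only on $p^\pm$.

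First I would dispose of the small-cube case $|Q|\leq 2^n$ with $x\in Q$. Choose the candidate $\eta_0=|Q|^{1/p(x)}$ and factor $\eta_0^{-p(y)}=\eta_0^{-p(x)}\cdot \eta_0^{\,p(x)-p(y)}$. The local log-H\"older condition \eqref{r5} gives $|p(x)-p(y)|\leq C/|\log\mathrm{diam}(Q)|$ for $y\in Q$, while $\log(1/\eta_0)$ is a bounded multiple of $|\log\mathrm{diam}(Q)|$. Hence $|(p(x)-p(y))\log(1/\eta_0)|$ is bounded uniformly in $y\in Q$, and the correction factor $\eta_0^{\,p(x)-p(y)}$ stays between two absolute constants. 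Integrating gives $\rho(\eta_0)\asymp |Q|\cdot\eta_0^{-p(x)}=1$, and monotonicity of $\rho$ forces $\|\chi_Q\|_{L^{p(\cdot)}}\asymp |Q|^{1/p(x)}$.

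For the large-cube case $|Q|\geq 1$, I would pick $\eta_0=|Q|^{1/p_\infty}$ and split $Q=(Q\cap B_R)\cup(Q\setminus B_R)$ for a fixed large radius $R$. The contribution of $Q\cap B_R$ to $\rho(\eta_0)$ is trivially bounded since $p$ lies between $p^-$ and $p^+$ and $|B_R|$ is fixed. On $Q\setminus B_R$, the decay condition \eqref{r6} supplies $|p(y)-p_\infty|\leq C/\log(e+|y|)$, and for points of a cube satisfying $|Q|\geq 1$ the quantity $\log(e+|y|)$ is comparable to $\log(1/\eta_0)\asymp \tfrac{1}{p_\infty}\log|Q|$ up to multiplicative constants (using $|y|\lesssim\mathrm{diam}(Q)\asymp|Q|^{1/n}$). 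Therefore $\eta_0^{\,p_\infty-p(y)}\asymp 1$ uniformly, so $\rho(\eta_0)\asymp 1$ and the norm equivalence with $|Q|^{1/p(\infty)}$ follows.

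The main technical obstacle in both regimes is verifying that the exponent $(p(x)-p(y))\log(1/\eta_0)$ stays uniformly bounded, which is precisely what log-H\"older continuity is designed for; choosing $\eta_0$ as an appropriate power of $|Q|$ is what makes the two factors of the estimate balance. A minor sanity check is needed in the overlap range $1\leq |Q|\leq 2^n$, where both formulas apply: they agree up to the implicit constants because $|Q|^{1/p(x)-1/p(\infty)}$ is uniformly bounded when $|Q|$ stays in a compact interval, and $p(x)$ itself is comparable to $p(\infty)$ via the decay condition. With these two pieces in hand, the stated equivalence is complete.
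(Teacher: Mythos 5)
The paper offers no proof of this lemma; it is quoted verbatim from \cite{ref6}, so your argument can only be judged on its own merits. Your modular set-up ($\rho(\eta)=\int_Q\eta^{-p(y)}\,dy$, locate $\eta$ with $\rho(\eta)\asymp1$, conclude via the homogeneity bounds $\rho(\lambda\eta)\le\lambda^{-p^-}\rho(\eta)$ for $\lambda\ge1$) is the standard one, and your treatment of the small-cube case is essentially correct, modulo the routine remarks that \eqref{r5} is only assumed for $|h_1-h_2|\le1/2$ and that the regime $\operatorname{diam}(Q)\asymp1$ must be handled by the trivial bound $|p(x)-p(y)|\le p^+-p^-$ together with $|\log|Q||\lesssim1$.

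The large-cube case, however, contains a genuine gap. You claim that for $y\in Q\setminus B_R$ one has $\log(e+|y|)\asymp\log|Q|$, justified by $|y|\lesssim\operatorname{diam}(Q)$. That inequality only yields $\log(e+|y|)\lesssim\log|Q|$, which is the useless direction: it makes the decay bound $C/\log(e+|y|)$ \emph{larger}. What you need is $\log(e+|y|)\gtrsim\log|Q|$, and this fails badly for points of a huge cube lying just outside $B_R$: there $\eta_0^{|p_\infty-p(y)|}$ can be as large as $|Q|^{C/(p_\infty\log(e+R))}$, which is unbounded as $|Q|\to\infty$, so the asserted uniform comparability $\eta_0^{\,p_\infty-p(y)}\asymp1$ on $Q\setminus B_R$ is false. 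The standard repair for the upper bound is the dichotomy valid under \eqref{r6} for $0<t\le1$: either $t\ge(e+|y|)^{-m}$, in which case $t^{p(y)}\le e^{mC}t^{p_\infty}$, or $t<(e+|y|)^{-m}$, in which case $t^{p(y)}\le t^{p^-}\le(e+|y|)^{-mp^-}$ with $mp^->n$; integrating $t=\eta_0^{-1}$ over $Q$ then gives $\rho(\eta_0)\le e^{mC}+\int_{\mathbb{R}^n}(e+|y|)^{-mp^-}\,dy<\infty$. For the matching lower bound you must either apply this upper bound to the conjugate exponent $p'(\cdot)$ and invoke the H\"older inequality of Lemma 1.1 together with Lemma 1.2, or observe that at least a fixed fraction of $Q$ lies at distance $\gtrsim\operatorname{diam}(Q)$ from the origin, where your comparability argument does apply. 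Without one of these devices the case $|Q|\ge1$ is not established.
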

\begin{lem} \cite{ding1} Suppose that $0<\sigma<n, s>1$, and $\Omega$ satisfies the $L^{s}$-Dini condition. If there is a constant $a_{0}>0$ such that $|y|<a_{0} R$, then the following inequality holds:

$$
\left(\int_{R<|x|<2 R}\left|\frac{\Omega(x-y)}{|x-y|^{n-\sigma}}-\frac{\Omega(x)}{|x|^{n-\sigma}}\right|^{s} d x\right)^{1 / s} \leq C R^{\frac{n}{s}-(n-\sigma)}\left\{\frac{|y|}{R}+\int_{|y| / 2 R<\delta<|y| / R} \frac{\omega_{s}(\delta)}{\delta} d \delta\right\}
$$
\end{lem}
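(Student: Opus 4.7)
The plan is to split the integrand into a part controlled by the oscillation of $\Omega$ and a part controlled by the smoothness of the Riesz-type factor $|\cdot|^{-(n-\sigma)}$. Writing
\[
\frac{\Omega(x-y)}{|x-y|^{n-\sigma}}-\frac{\Omega(x)}{|x|^{n-\sigma}}
=\frac{\Omega(x-y)-\Omega(x)}{|x-y|^{n-\sigma}}
+\Omega(x)\left(\frac{1}{|x-y|^{n-\sigma}}-\frac{1}{|x|^{n-\sigma}}\right)
=:E_{1}(x)+E_{2}(x),
\]
I would choose $a_{0}$ small enough (say $a_{0}\le 1/4$) so that for all $x$ in the annulus $R<|x|<2R$ and all $y$ with $|y|<a_{0}R$ one has $|x-y|\approx|x|\approx R$; this comparison will be used throughout.

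For the easier term $E_{2}$, the mean-value theorem applied to $t\mapsto t^{-(n-\sigma)}$ together with $|x-y|,|x|\approx R$ gives
\[
\left|\frac{1}{|x-y|^{n-\sigma}}-\frac{1}{|x|^{n-\sigma}}\right|\lesssim \frac{|y|}{R^{n-\sigma+1}}.
\]
Taking the $L^{s}$-norm of $E_{2}$ over the annulus and using the degree-zero homogeneity of $\Omega$ to evaluate $\|\Omega\,\chi_{R<|x|<2R}\|_{L^{s}}\lesssim R^{n/s}\|\Omega\|_{L^{s}(S^{n-1})}$ via polar coordinates, one obtains the bound $\|E_{2}\|_{L^{s}}\lesssim R^{n/s-(n-\sigma)}\cdot(|y|/R)$, which matches the first contribution on the right-hand side.

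The term $E_{1}$ is the substantive one. I would switch to polar coordinates $x=r\xi$, $r\in(R,2R)$, $\xi\in S^{n-1}$, and exploit homogeneity to write $\Omega(r\xi-y)=\Omega(\xi_{r})$ with $\xi_{r}:=(r\xi-y)/|r\xi-y|\in S^{n-1}$. A direct computation shows that $\xi_{r}$ is obtained from $\xi$ by a rotation $\rho_{r}$ satisfying $\|\rho_{r}-I\|\lesssim |y|/r$. Combined with $|x-y|^{-(n-\sigma)}\lesssim R^{-(n-\sigma)}$ on the annulus, this yields the angular bound
\[
\left(\int_{S^{n-1}}|E_{1}(r\xi)|^{s}\,d\xi\right)^{1/s}\lesssim R^{-(n-\sigma)}\,\omega_{s}\!\left(\frac{|y|}{r}\right),
\]
by the very definition of the $L^{s}$-modulus of continuity. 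Integrating in $r$ by Minkowski's inequality and then performing the change of variables $\delta=|y|/r$ (so that $dr/r=-d\delta/\delta$ and $r\in(R,2R)$ corresponds to $\delta\in(|y|/2R,|y|/R)$) converts the radial integration into $\int_{|y|/2R}^{|y|/R}\omega_{s}(\delta)/\delta\,d\delta$, with the residual powers of $R$ combining into $R^{n/s-(n-\sigma)}$.

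The principal obstacle is the angular step: constructing the rotation $\rho_{r}$ in a uniform way and invoking the definition of $\omega_{s}$ correctly, since the perturbation $\xi\mapsto\xi_{r}$ depends on both $\xi$ and $r$. Once this is handled cleanly, the $L^{s}$-Dini condition enters essentially mechanically through the radial substitution. A secondary bookkeeping point is to apply Minkowski in the right order — integrate first in the angular variable to extract $\omega_{s}(|y|/r)$, then radially — so that the final estimate is linear in $\omega_{s}$ rather than in its $s$-th power.
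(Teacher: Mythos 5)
The paper offers no proof of this lemma; it is quoted verbatim from \cite{ding1}, so your proposal can only be judged on its own merits. Your decomposition into $E_{1}+E_{2}$ and your treatment of $E_{2}$ (mean value theorem for $t\mapsto t^{-(n-\sigma)}$ plus $\|\Omega\chi_{\{R<|x|<2R\}}\|_{L^{s}}\approx R^{n/s}\|\Omega\|_{L^{s}(S^{n-1})}$) are the standard opening moves and are correct. The problem is entirely in $E_{1}$, and you have in fact put your finger on it yourself before waving it away. The map $\xi\mapsto\xi_{r}=(r\xi-y)/|r\xi-y|$ is \emph{not} of the form $\xi\mapsto\rho_{r}\xi$ for a single rotation $\rho_{r}$ independent of $\xi$: the displacement $|\xi_{r}-\xi|$ is of order $(|y|/r)\sin\alpha$, where $\alpha$ is the angle between $\xi$ and $y$, so it vanishes at $\xi=\pm y/|y|$ and is maximal on the equator, which no fixed rotation close to the identity reproduces. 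Since $\omega_{s}(\delta)$ is defined as a supremum over \emph{fixed} rotations applied to all of $S^{n-1}$, you cannot invoke its definition to get $\bigl(\int_{S^{n-1}}|\Omega(\xi_{r})-\Omega(\xi)|^{s}d\xi\bigr)^{1/s}\le\omega_{s}(C|y|/r)$. Converting the $\xi$-dependent perturbation into fixed rotations is the actual content of the lemma (it is handled in \cite{ding1} and in the classical $\sigma=0$ antecedents by a genuine argument, not a "direct computation"), and your sketch does not supply it.

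There is a second, independent gap in the radial bookkeeping. Even if you grant the angular bound $\omega_{s}(C|y|/r)$ for each fixed $r$, passing to polar coordinates is Fubini, not Minkowski, and it yields
\[
R^{-(n-\sigma)}\Bigl(\int_{R}^{2R}\omega_{s}(C|y|/r)^{s}\,r^{n-1}dr\Bigr)^{1/s}\approx R^{\frac{n}{s}-(n-\sigma)}\Bigl(\int_{|y|/2R}^{|y|/R}\omega_{s}(\delta)^{s}\,\frac{d\delta}{\delta}\Bigr)^{1/s},
\]
i.e.\ an $L^{s}\bigl(\frac{d\delta}{\delta}\bigr)$-average of $\omega_{s}$ over the dyadic interval. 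By H\"older on an interval of $\frac{d\delta}{\delta}$-mass $\ln 2$, this quantity \emph{dominates} the $L^{1}$-type integral $\int_{|y|/2R}^{|y|/R}\omega_{s}(\delta)\,\frac{d\delta}{\delta}$ appearing in the statement; the reverse domination you need would require a doubling property of $\omega_{s}$, which the $L^{s}$-Dini hypothesis does not provide (consider $\omega_{s}(\delta)\sim e^{-1/\delta}$). So even with the angular step repaired in the naive form, your route produces $\omega_{s}(C|y|/R)$ rather than the stated dyadic Dini integral. Both points need to be resolved as in the cited source rather than by the shortcuts proposed.
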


\begin{theorem}

Suppose that  $0 \leq \Gamma<\infty$, $b \in \operatorname{BMO}\left(\mathbb{R}^{n}\right), 0<\beta \leq 1,0<\sigma<n-\beta$,  $\alpha(\cdot) \in L^{\infty}\left(\mathbb{R}^{n}\right)$ be log-H\"older continuous both at the origin and infinity, and $p_{1}(\cdot) \in \mathcal{P}\left(\mathbb{R}^{n}\right)$ satisfies the conditions \eqref{r5} and \eqref{r6} with $p_{1}^{+}<n / \sigma$ and $1 / p_{1}(x)-1 / p_{2}(x)=\sigma / n$.
Let $\Omega \in L^{s}\left(\mathrm{~S}^{n-1}\right)\left(s>p_{2}^{+}\right)$with $1 \leq s^{\prime}<p_{1}^{-}$ and satisfy $\int_{0}^{1} \frac{\omega_{s}(\delta)}{\delta^{1+\beta}} d \delta<\infty$.
Let $0<u_{1} \leq u_{2}<\infty$ and  $\alpha(\cdot)\geq 2\lambda$, with $n \delta_{2} \leq  \alpha(0), \alpha_{\infty}<n \delta_{2} +\beta $, then $\left[b, T_{\Omega, \sigma}\right]$ is bounded from $H M\dot{K} ^{\alpha(\cdot),u_1),\theta}_{ \Gamma, p_1(\cdot)}(\mathbb{R}^n)$ to $ M\dot{K} ^{\alpha(\cdot),u_2),\theta}_{ \Gamma, p_2(\cdot)}(\mathbb{R}^n)$.  
\end{theorem}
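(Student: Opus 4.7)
The plan is to follow the scheme of Theorem~3.1, replacing the $L^{p(\cdot)}$-boundedness assumption on $T$ by the $(L^{p_1(\cdot)},L^{p_2(\cdot)})$-boundedness of $[b,T_{\Omega,\sigma}]$, and inserting BMO estimates wherever $b$-dependent factors arise. Given $f \in H M\dot{K}^{\alpha(\cdot),u_1),\theta}_{\Gamma, p_1(\cdot)}(\mathbb{R}^n)$, Theorem~2.3 furnishes an atomic decomposition $f=\sum_{i\in\mathbb{Z}}\lambda_i b_i$ in $\mathcal{S}'(\mathbb{R}^n)$, with each $b_i$ a central $(\alpha(\cdot),p_1(\cdot))$-atom supported in $B_i$, and
\[
\Lambda := \sup_{\vartheta>0}\sup_{m_0\in\mathbb{Z}} 2^{-m_0\Gamma u_1(1+\vartheta)}\vartheta^\theta\!\!\sum_{i\le m_0}|\lambda_i|^{u_1(1+\vartheta)} \lesssim \|f\|_{H M\dot{K}^{\alpha(\cdot),u_1),\theta}_{\Gamma, p_1(\cdot)}(\mathbb{R}^n)}^{u_1(1+\vartheta)}.
\]
Applying Proposition~1.6, I would break $\|[b,T_{\Omega,\sigma}]f\|_{M\dot{K}^{\alpha(\cdot),u_2),\theta}_{\Gamma, p_2(\cdot)}(\mathbb{R}^n)}^{u_2(1+\vartheta)}$ into three pieces $I$, $II$, $III$ of exactly the shape met in Theorem~3.1, and for each shell $F_k$ split the inner $L^{p_2(\cdot)}$-norm into a ``near'' sum over $i\ge k$ and a ``far'' sum over $i\le k-1$.

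For the near block, I would invoke the $(L^{p_1(\cdot)},L^{p_2(\cdot)})$-boundedness of $[b,T_{\Omega,\sigma}]$, which is established from Lemmas~4.1 and~4.2 via the extrapolation argument indicated in the paragraph preceding the theorem statement. Combined with the atom bound $\|b_i\|_{L^{p_1(\cdot)}}\le 2^{-\alpha_i i}$, this gives $\|[b,T_{\Omega,\sigma}]b_i\chi_k\|_{L^{p_2(\cdot)}}\lesssim \|b\|_* 2^{-\alpha_i i}$, playing exactly the role of the $L^{p(\cdot)}$-bound in Theorem~3.1. The subsequent estimates for $I_1$, $II_1$, $III_1$ then reproduce those of Theorem~3.1 essentially verbatim in both sub-cases $0<u_2(1+\vartheta)\le 1$ and $1<u_2(1+\vartheta)<\infty$, the conditions $n\delta_2\le \alpha(0)$ and $\alpha_\infty<n\delta_2+\beta$ providing the necessary geometric convergence.

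For the far block, I would exploit the vanishing moment $\int b_i = 0$ together with the splitting $b(x)-b(y)=(b(x)-b_{B_i})+(b_{B_i}-b(y))$ to write
\[
[b,T_{\Omega,\sigma}]b_i(x) = (b(x)-b_{B_i})\!\int_{B_i}\!\left[\frac{\Omega(x-y)}{|x-y|^{n-\sigma}}-\frac{\Omega(x)}{|x|^{n-\sigma}}\right]\!b_i(y)\,dy - T_{\Omega,\sigma}\bigl((b-b_{B_i})b_i\bigr)(x).
\]
For the first term, Minkowski's inequality (Lemma~4.4) combined with Lemma~4.7 and the Dini assumption $\int_0^1 \omega_s(\delta)\delta^{-1-\beta}d\delta<\infty$ (which supplies the decay $(|y|/2^k)^\beta\lesssim 2^{(i-k)\beta}$ on the kernel difference for $y\in B_i$, $x\in F_k$, $k\ge i+2$) yields an $L^s$-bound. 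Passing to $L^{p_2(\cdot)}$ via Lemmas~4.5 and~4.6, applying Lemma~1.3 on the atom size, and using Lemma~4.3 to estimate $\|(b-b_{B_i})\chi_{F_k}\|_{L^{p_2(\cdot)}}\lesssim (k-i)\|b\|_*\|\chi_{B_k}\|_{L^{p_2(\cdot)}}$ produces an overall decay of the form $(k-i)\|b\|_*\,2^{(i-k)(n\delta_2+\beta)}2^{-\alpha_i i}$. The second term is handled analogously, with Lemma~4.3 invoked once more to control $\|(b-b_{B_i})b_i\|_{L^{p_1(\cdot)}}$ before running the same kernel-smoothness argument.

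Inserting these bounds into $I_2$, $II_2$, $III_2$ reduces each to a geometric series in $k-i$ weighted by the polynomial factor $(k-i)$ coming from BMO. \emph{The main obstacle} is the precise calibration of the resulting exponents: after cancellation, each summand carries the weight $(k-i)\,2^{(i-k)(n\delta_2+\beta-\alpha_\infty)}$, which is summable only by virtue of the hypothesis $\alpha_\infty<n\delta_2+\beta$ --- exactly the margin needed to absorb the polynomial BMO loss, and the sole place in the argument where the Dini parameter $\beta$ enters decisively. Once this matching is verified, both ranges $0<u_2(1+\vartheta)\le 1$ and $1<u_2(1+\vartheta)<\infty$ are dispatched by standard H\"older-type summations identical in structure to those appearing in Theorem~3.1, giving $\max\{I,II+III\}\lesssim \Lambda$ and hence the desired boundedness.
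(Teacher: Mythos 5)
Your proposal tracks the paper's argument in every structural respect: the atomic decomposition from Theorem 2.3, the splitting via Proposition 1.6 into $I$, $II$, $III$ with near/far sub-blocks, the use of the $(L^{p_1(\cdot)},L^{p_2(\cdot)})$-boundedness of $[b,T_{\Omega,\sigma}]$ together with the atom size bound for the near blocks, and the combination of the kernel-smoothness lemma, the BMO lemma giving the factor $(k-i)\|b\|_{*}$, and the norm-comparison lemmas to arrive at the far-block decay $(k-i)\|b\|_{*}2^{(i-k)(\beta+n\delta_2)}2^{-i\alpha_i}$; the final geometric summations and the decisive role of $\alpha_\infty<n\delta_2+\beta$ are exactly as in the paper.

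The one place you deviate is the algebra of the far block, and there your second term has a genuine gap. The identity $[b,T_{\Omega,\sigma}]b_i=(b-b_{B_i})T_{\Omega,\sigma}b_i-T_{\Omega,\sigma}\bigl((b-b_{B_i})b_i\bigr)$ is correct, and the first term legitimately acquires the kernel difference $\frac{\Omega(x-y)}{|x-y|^{n-\sigma}}-\frac{\Omega(x)}{|x|^{n-\sigma}}$ because $\int b_i=0$. But $(b-b_{B_i})b_i$ has no vanishing moments, so you cannot ``run the same kernel-smoothness argument'' on $T_{\Omega,\sigma}\bigl((b-b_{B_i})b_i\bigr)$: without subtracting $\Omega(x)/|x|^{n-\sigma}$ you lose the factor $2^{(i-k)\beta}$ supplied by the Dini condition, and a direct size estimate of this term on $F_k$ only yields $\|b\|_{*}2^{(i-k)n\delta_2}2^{-i\alpha_i}$. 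Since the hypotheses allow $\alpha(0)=n\delta_2$ (and $\alpha_\infty=n\delta_2$), the resulting series $\sum_{k>i}2^{(i-k)(n\delta_2-\alpha(0))}$ need not converge, so the $\beta$-margin is indispensable in this piece as well. The paper orders the steps differently: it first applies Minkowski to the (already kernel-differenced) commutator integral and only then splits $|b(x)-b(y)|\leq|b(x)-b_{B_j}|+|b_{B_j}-b(y)|$ inside the integrand, so that both BMO pieces $I_{11}$ and $I_{12}$ retain the kernel difference and hence the $2^{(j-t_0)\beta}$ decay. (The paper's own justification of that initial step ``by the vanishing moments of $a_j$'' is itself loose, since the residual term $\frac{\Omega(x)}{|x|^{n-\sigma}}\int b(y)a_j(y)\,dy$ does not vanish; but at least both of its BMO pieces are estimated with the smoothness gain.) To make your version work you should adopt the paper's ordering rather than the operator-level splitting, or otherwise account for the piece of $T_{\Omega,\sigma}\bigl((b-b_{B_i})b_i\bigr)$ that carries no $\beta$-decay.
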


\begin{proof}
  
 Suppose that  $g\in   H {M \dot{K} ^{\alpha(\cdot),u_1),\theta}_{ \Gamma, p_1(\cdot)}(\mathbb{R}^n)}$. By using Theorem 2.3, $f=\sum_{j=-\infty}^{\infty} \lambda_{j} b_{j}$ converges in $\mathcal{S}^{\prime}\left(\mathbb{R}^{n}\right)$, where each $b_{j}$ is a central $(\alpha(\cdot), p(\cdot))$-atom with support contained in $B_{j}$ and

\begin{equation*}
\left\| g\right\|_ {H {M \dot{K} ^{\alpha(\cdot),u_1),\theta}_{ \Gamma, p_1(\cdot)}(\mathbb{R}^n)}} \approx \inf \left(\sup_{\psi>0}\sup _{L \in \mathbb{Z}} 2^{-L \Gamma}\left(\psi^\theta\sum_{j=-\infty}^{L}\left|\lambda_{j}\right|^{u_1(1+\psi)}\right)^{\frac{1}{u_1(1+\psi)}}\right) . 
\end{equation*}

For simplicity, we denote $\Lambda=\sup_{\psi>0}\sup _{L \in \mathbb{Z}} 2^{-L \Gamma u_1(1+\psi)}\psi^\theta \sum_{j=-\infty}^{L}\left|\lambda_{j}\right|^{u_1(1+\psi)}$. By Proposition $1.6$, we have

\begin{align*}
&\left \|\left[b, T_{\Omega, \sigma}\right] f\right\|_{M \dot{K} ^{\alpha(\cdot),u_2),\theta}_{ \Gamma, p_2(\cdot)}(\mathbb{R}^n)}^{u_1(1+\psi)} \\
& \approx \max \left\{\sup_{\psi>0}\sup _{L <  0, L \in \mathbb{Z}} 2^{-L \Gamma u_1(1+\psi)} \psi^\theta \left(\sum_{t_0=-\infty}^{L} 2^{t_0\alpha (0) u_1(1+\psi)}\left\|\left(\left[b, T_{\Omega, \sigma}\right] f\right) \chi_{t_0}\right\|_{L^{p_2(\cdot)}}^{u_1(1+\psi)}\right),\right. \\
& \sup_{\psi>0}\sup _{L \geq  0, L \in \mathbb{Z}} 2^{-L \Gamma u_1(1+\psi)} \psi^\theta \left(\sum_{t_0=-\infty}^{-1} 2^{t_0\alpha (0) u_1(1+\psi)}\left\|(\left[b, T_{\Omega, \sigma}\right] f) \chi_{t_0}\right\|_{L^{p_2(\cdot)}}^{u_1(1+\psi)}\left.\quad+\sum_{t_0=0}^{L} 2^{t_0 \alpha_\infty u_1(1+\psi)}\left\|\left(\left[b, T_{\Omega, \sigma}\right] f\right) \chi_{t_0}\right\|_{L^{p_2(\cdot)}}^{u_1(1+\psi)}\right)\right\} \\
& \lesssim \max \{I, I I+I I I\}. 
\end{align*}

We will find the estimated for $I$ and $III$ and estimate of $II$ can be obtained similarly.

To complete our proof, we only need show that there exists a positive constant $C$ such that $I, I I, I I I \leq  C \Lambda$.

Firstly, we will find the  estimate of $I$ :

\begin{align*}
    I=&\sup_{\psi>0}\sup _{L <  0, L \in \mathbb{Z}} 2^{-L \Gamma u_1(1+\psi)} \psi^\theta \left(\sum_{t_0=-\infty}^{L} 2^{t_0\alpha (0) u_1(1+\psi)}\left\|\left( \left[b, T_{\Omega, \sigma}\right] f\right) \chi_{t_0}\right\|_{L^{p_2(\cdot)}}^{u_1(1+\psi)}\right)\\
    \leq & \sup_{\psi>0}\sup _{L <  0, L \in \mathbb{Z}} 2^{-L \Gamma u_1(1+\psi)} \psi^\theta \sum_{t_0=-\infty}^{L} 2^{t_0\alpha (0) u_1(1+\psi)}\left(\sum\limits_{j=-\infty}^{t_0-1} \left| \lambda_j\right|\left\|\left( \left[b, T_{\Omega, \sigma}\right](a_j) \right) \chi_{t_0}\right\|_{L^{p_2(\cdot)}}\right)^{u_1(1+\psi)}\\
    + & \sup_{\psi>0}\sup _{L <  0, L \in \mathbb{Z}} 2^{-L \Gamma u_1(1+\psi)} \psi^\theta \sum_{t_0=-\infty}^{L} 2^{t_0\alpha (0) u_1(1+\psi)}\left(\sum\limits_{j=k}^{\infty} \left| \lambda_j\right|\left\|\left( \left[b, T_{\Omega, \sigma}\right](a_j) \right) \chi_{t_0}\right\|_{L^{p_2(\cdot)}}\right)^{u_1(1+\psi)}\\
    :=& I_1+I_2. 
\end{align*}

We first estimate $I_{1}$. For each $k \in \mathbb{Z}, j \leq t_0-1$ and a.e. $x \in F_{t_0}$, using Lemma 2.4, the Minkowski's inequality and the vanishing moments of $a_{j}$, we get

$$
\begin{aligned}
\left\|\left[b, T_{\Omega, \sigma}\right]\left(a_{j}\right) \chi_{t_0}\right\|_{L^{p_{2}(\cdot)}} & \lesssim\int_{B_{j}}\left\|\left|\frac{\Omega(\cdot-y)}{|\cdot-y|^{n-\sigma}}-\frac{\Omega(\cdot)}{|\cdot|^ {n- \sigma}}\right|(b(\cdot)-b(y)) \chi_{t_0}(\cdot)\right\|_{L^{p_{2}(\cdot)}}\left|a_{j}(y)\right| d y \\
& \lesssim \int_{B_{j}}\left\|\left|\frac{\Omega(\cdot-y)}{|\cdot-y|^{n-\sigma}}-\frac{\Omega(\cdot)}{|\cdot|^{n-\sigma}}\right|\left|b(\cdot)-b_{B_{j}}\right| \chi_{t_0}(\cdot)\right\|_{L^{p_{2}(\cdot)}}\left|a_{j}(y)\right| d y \\
& \lesssim \int_{B_{j}}\left\|\left|\frac{\Omega(\cdot-y)}{|\cdot-y|^{n-\sigma}}-\frac{\Omega(\cdot)}{|\cdot|^{n-\sigma}}\right| \chi_{t_0}(\cdot)\right\|_{L^{p_{2}(\cdot)}}\left|b_{B_{j}}-b(y)\right|\left|a_{j}(y)\right| d y \\
& =: I_{11}+I_{12}. 
\end{aligned}
$$

To estimate $I_{11}$, we note that $s>p_{2}^{+}$, and denote $\tilde{p}_{2}(\cdot)>1$ and $\frac{1}{p_{2}(x)}=\frac{1}{\tilde{p}_{2}(x)}+\frac{1}{s}$. Then by Lemmas $3.3$ and $3.5$ we have

$$
\begin{aligned}
& \left\|\left|\frac{\Omega(\cdot-y)}{\left| \cdot-y\right|^{n-\sigma}}-\frac{\Omega(\cdot)}{|\cdot|^{n-\sigma}}\right|\left|b(\cdot)-b_{B_{j}}\right| \chi_{t_0}(\cdot)\right\|_{L^{p_{2}(\cdot)}} \\
& \lesssim\left\|\left|\frac{\Omega(\cdot-y)}{\left| \cdot-y\right|^{n-\sigma}}-\frac{\Omega(\cdot)}{|\cdot| ^{n-\sigma}}\right| \chi_{t_0}(\cdot)\right\|_{L^{s}}\left\|\left|b(\cdot)-b_{B_{j}}\right| \chi_{t_0}(\cdot) \right\|_{L^{\tilde{p}_{2}(\cdot)}} \\
& \lesssim \left\|\left|\frac{\Omega(\cdot-y)}{\left| \cdot-y\right|^{n-\sigma}}-\frac{\Omega(\cdot)}{|\cdot| ^{n-\sigma}}\right| \chi_{t_0}(\cdot)\right\|_{L^{s}}(t_0-j)\left\| b\right\|_{*}\left\| \chi_{B_{t_0}} \right\|_{L^{\tilde{p}_{2}(\cdot)}} .
\end{aligned}
$$

When $\left|B_{t_0}\right| \lesssim 2^{n}$ and $x_{t_0} \in B_{t_0}$, by Lemma $3.6$ we have

$$
\left\|\chi_{B_{t_0}}\right\|_{L^{\tilde{p}_{2}(\cdot)}} \approx\left|B_{t_0}\right|^{\frac{1}{p_{2}\left(x_{t_0}\right)}} \approx\left\|\chi_{B_{t_0}}\right\|_{L^{p_{1}(\cdot)}}\left|B_{t_0}\right|^{-\frac{1}{s}-\frac{\sigma}{n}}.
$$

When $\left|B_{t_0}\right| \geq 1$ we have

$$
\left\|\chi_{B_{t_0}}\right\|_{L^{\tilde{p}_{2}(\cdot)}} \approx\left|B_{t_0} \right|^{\frac{1}{p_{2}(\infty)}} \approx\left\|\chi_{B_{t_0}}\right\|_{L^{p_{1}(\cdot)}}\left| B_{t_0}\right|^{-\frac{1}{s}-\frac{\sigma}{n}}.
$$

So, we obtain $\left\|\chi_{B_{t_0}}\right\|_{L^{\tilde{p}_{2}(\cdot)}} \approx\left\|\chi_{B_{t_0}}\right\|_{L^{p_{1}(\cdot)}}\left|B_{t_0}\right|^{-\frac{1}{s}-\frac{\sigma}{n}}$.

Meanwhile, by Lemma $3.7$ we have

$$
\begin{aligned}
&\left\|\left|\frac{\Omega(\cdot-y)}{\left| \cdot-y\right|^{n-\sigma}}-\frac{\Omega(\cdot)}{|\cdot| ^{n-\sigma}}\right| \chi_{t_0}(\cdot)\right\|_{L^{s}} \lesssim 2^{(t_0-1)\left(\frac{n}{s}-(n-\sigma)\right)}\left\{\frac{|y|}{2^{t_0}}+\int_{|y| / 2^{t_0}}^{|y| 2^{t_0-1}} \frac{\omega_{s}(\delta)}{\delta} d \delta\right\} \\
& \lesssim 2^{(t_0-1)\left(\frac{n}{s}-(n-\sigma)\right)}\left(2^{j-t_0+1}+2^{(j-t_0+1) \beta} \int_{0}^{1} \frac{\omega_{s}(\delta)}{\delta} d \delta\right) \lesssim  2^{(t_0-1)\left(\frac{n}{s}-(n-\sigma)\right)} 2^{(j-t_0) \beta}.
\end{aligned}
$$

So, using the generalized Hölder inequality, we obtain the following estimate for $I_{11}$ :

\begin{align*}
I_{11}=&\int_{B_{j}}\left\|\left|\frac{\Omega(\cdot-y)}{|\cdot-y|^{n-\sigma}}-\frac{\Omega(\cdot)}{|\cdot|^{n-\sigma}}\right|\left|b(\cdot)-b_{B_{j}}\right| \chi_{t_0}(\cdot)\right\|_{L^{p_{2}(\cdot)}}\left|a_{j}(y)\right| d y \\
\lesssim & (t_0-j)\|b\|_{*} 2^{(t_0-1)\left(\frac{n}{s}-(n-\sigma)\right)} 2^{(j-t_0) \beta}\left\|\chi_{B_{t_0}}\right\|_{L^{p_{1}(\cdot)}}\left|B_{t_0}\right|^{-\frac{1}{s}-\frac{\sigma}{n}} \int_{B_{j}}\left|a_{j}(y)\right| d y  \\
\lesssim & (t_0-j)\|b\|_{*} 2^{-t_0n+(j-t_0) \beta}\left\|\chi_{B_{t_0}}\right\|_{L^{p_{1}(\cdot)}}\left\|a_{j}\right\|_{L^{p_{1}(\cdot)}}\left\|\chi_{B_{j}}\right\|_{L^{p_{1}^{\prime}(\cdot)}}.
\end{align*}

To estimate $I_{12}$, we use arguments similar to those applied for $I_{11}$, to obtain

$$
\begin{aligned}
 \left\|\left|\frac{\Omega(\cdot-y)}{|\cdot-y|^{n-\sigma}}-\frac{\Omega(\cdot)}{|\cdot|^{n-\sigma}}\right| \chi_{t_0}(\cdot)\right\|_{L^{p_{2}(\cdot)}} &\lesssim\left\|\left|\frac{\Omega(\cdot-y)}{|\cdot-y|^{n-\sigma}}-\frac{\Omega(\cdot)}{|\cdot|^{n-\sigma}}\right| \chi_{t_0}(\cdot)\right\|_{L^{s}} \left\|\chi_{t_0}(\cdot) \right\|_{L^{\tilde{p}_{2}(\cdot)}} \\
& \lesssim\left\|\left|\frac{\Omega(\cdot-y)}{|\cdot-y|^{n-\sigma}}-\frac{\Omega(\cdot)}{|\cdot|^{n-\sigma}}\right| \chi_{t_0}(\cdot)\right\|_{L^{s}} \left\|\chi_{B_k} \right\|_{L^{\tilde{p}_{2}(\cdot)}} \\
& \lesssim  2^{(t_0-1)\left(\frac{n}{s}-(n-\sigma)\right)} 2^{(j-t_0) \beta}\left\|\chi_{B_{t_0}}\right\|_{L^{\tilde{p}_{2}(\cdot)}} \lesssim  2^{-t_0n+(j-t_0) \beta}\left\|\chi_{B_{t_0}}\right\|_{L^{p_{1}(\cdot)}}.
\end{aligned}
$$

So, by Lemma $3.3$ and the generalized Hölder inequality, we have

\begin{align*}
I_{12}&=  \int_{B_{j}}\left\|\left|\frac{\Omega(\cdot-y)}{|\cdot-y|^{n-\sigma}}-\frac{\Omega(\cdot)}{|\cdot|{ }^{n-\sigma}}\right| \chi_{t_0}(\cdot)\right\|_{L^{p_{2}(\cdot)}}\left|b_{B_{j}}-b(y) \| a_{j}(y)\right| d y \\
& \lesssim  2^{-t_0n+(j-t_0) \beta}\left\|\chi_{B_{t_0}}\right\|_{L^{p_{1}(\cdot)}} \int_{B_{j}}\left|b_{B_{j}}-b(y) \| a_{j}(y)\right| d y \\
&\lesssim   2^{-t_0n+(j-t_0) \beta}\left\|\chi_{B_{t_0}}\right\|_{L^{p_{1}(\cdot)}}\left\|\left(b_{B_{j}}-b\right) \chi_{B_{j}}\right\|_{L^{p_{1}^{\prime}(\cdot)}}\left\|a_{j}\right\|_{L^{p_{1}(\cdot)}} \\
&\lesssim  \|b\|_{*} 2^{-t_0n+(j-t_0) \beta}\left\|\chi_{B_{t_0}}\right\|_{L^{p_{1}(\cdot)}}\left\|a_{j}\right\|_{L^{p_{1}(\cdot)}}\left\|\chi_{B_{j}}\right\|_{L^{p_{1}^{\prime}(\cdot)}}.
\end{align*}

Next, using estimates for $I_{11}$,  and  $I_{12}$, we can write

\begin{align*}
    \left\|\left[b, T_{\Omega, \sigma}\right]\left(a_{j}\right) \chi_{t_0}\right\|_{L^{p_{2}(\cdot)}}\lesssim & (t_0-j)\|b\|_{*} 2^{-t_0n+(j-t_0) \beta}\left\|\chi_{B_{t_0}}\right\|_{L^{p_{1}(\cdot)}}\left\|a_{j}\right\|_{L^{p_{1}(\cdot)}}\left\|\chi_{B_{j}}\right\|_{L^{p_{1}^{\prime}(\cdot)}}\\
    \lesssim & (t_0-j)\|b\|_{*}2^{(j-t_0) \beta}\left\|a_{j}\right\|_{L^{p_{1}(\cdot)}}\frac{\left\|\chi_{B_{j}}\right\|_{L^{p_{1}^{\prime}(\cdot)}}}{\left\|\chi_{B_{t_0}}\right\|_{L^{p_{1}^{\prime}(\cdot)}}}\\
    \lesssim &  (t_0-j)\|b\|_{*} 2^{-j \alpha_j +(j-t_0)(\beta+n\delta_2)}.
\end{align*}

So we have

\begin{align*}
I_{1} \lesssim &\sup_{\psi>0}\sup _{L <  0, L \in \mathbb{Z}} 2^{-L \Gamma u_1(1+\psi)} \psi^\theta \sum_{t_0=-\infty}^{L} 2^{t_0\alpha (0) u_1(1+\psi)}\left(\sum\limits_{j=-\infty}^{t_0-1} \left| \lambda_j\right|\left\|\left( \left[b, T_{\Omega, \sigma}\right](a_j) \right) \chi_{t_0}\right\|_{L^{p_2(\cdot)}}\right)^{u_1(1+\psi)} \\
\lesssim & \sup_{\psi>0}\sup _{L <  0, L \in \mathbb{Z}} 2^{-L \Gamma u_1(1+\psi)} \psi^\theta \sum_{t_0=-\infty}^{L} 2^{t_0\alpha (0) u_1(1+\psi)}\left(\sum\limits_{j=-\infty}^{t_0-1} \left| \lambda_j\right|(t_0-j)\|b\|_{*} 2^{-j \alpha_j +(j-t_0)(\beta+n\delta_2)}\right)^{u_1(1+\psi)} \\
\lesssim &\sup_{\psi>0}\sup _{L <  0, L \in \mathbb{Z}} 2^{-L \Gamma u_1(1+\psi)} \psi^\theta \sum_{t_0=-\infty}^{L} \left(\sum\limits_{j=-\infty}^{t_0-1} \left| \lambda_j\right|(t_0-j)\|b\|_{*} 2^{(j-t_0)(\beta+n\delta_2-\alpha(0))}\right)^{u_1(1+\psi)} .
\end{align*}

When $1<u_{1}<\infty$, take $1 / u_{1}+1 / u_{1}^{\prime}=1$. Since $\beta+n \delta_{2}-\alpha(0)>0$, we can use the Hölder inequality to obtain

\begin{align*}
I_{1} \lesssim& \|b\|_{*}^{u_1(1+\psi)} \sup_{\psi>0}\sup _{L <  0, L \in \mathbb{Z}} 2^{-L \Gamma u_1(1+\psi)} \psi^\theta \sum_{t_0=-\infty}^{L} \left(\sum\limits_{j=-\infty}^{t_0-1} \left| \lambda_j\right|^{u_1(1+\psi)}2^{(j-t_0)(\beta+n\delta_2-\alpha(0)){u_1(1+\psi)/2}}\right)\\
\times &\left(\sum\limits_{j=-\infty}^{t_0-1} (t_0-j)^{(u_1(1+\psi))^\prime}2^{(j-t_0)(\beta+n\delta_2-\alpha(0)){(u_1(1+\psi))^\prime/2}}\right)^{\frac{u_1(1+\psi)}{(u_1(1+\psi))^\prime}}\\
\lesssim& \|b\|_{*} ^{u_1(1+\psi)}\sup_{\psi>0}\sup _{L <  0, L \in \mathbb{Z}} 2^{-L \Gamma u_1(1+\psi)} \psi^\theta \sum_{t_0=-\infty}^{L} \left(\sum\limits_{j=-\infty}^{t_0-1} \left| \lambda_j\right|^{u_1(1+\psi)}2^{(j-t_0)(\beta+n\delta_2-\alpha(0)){u_1(1+\psi)/2}}\right)\\
\lesssim& \|b\|_{*} ^{u_1(1+\psi)}\sup_{\psi>0}\sup _{L <  0, L \in \mathbb{Z}} 2^{-L \Gamma u_1(1+\psi)} \psi^\theta \sum_{j=-\infty}^{L}\left| \lambda_j\right|^{u_1(1+\psi)} \left(\sum\limits_{t_0=j+1}^{-1} 2^{(j-t_0)(\beta+n\delta_2-\alpha(0)){u_1(1+\psi)/2}}\right)\\
\lesssim& \|b\|_{*} ^{u_1(1+\psi)}\Lambda. 
\end{align*}

When $0<u_{1} \leq 1$, we have

\begin{align*}
I_{1} = & \sup_{\psi>0}\sup _{L <  0, L \in \mathbb{Z}} 2^{-L \Gamma u_1(1+\psi)} \psi^\theta \sum_{t_0=-\infty}^{L} \left(\sum\limits_{j=-\infty}^{t_0-1} \left| \lambda_j\right|(t_0-j)\|b\|_{*} 2^{(j-t_0)(\beta+n\delta_2-\alpha(0))}\right)^{u_1(1+\psi)}\\
 \lesssim &\|b\|_{*} ^{u_1(1+\psi)}\sup_{\psi>0}\sup _{L <  0, L \in \mathbb{Z}} 2^{-L \Gamma u_1(1+\psi)} \psi^\theta \sum_{t_0=-\infty}^{L} \left(\sum\limits_{j=-\infty}^{t_0-1} \left| \lambda_j\right|^{u_1(1+\psi)}(t_0-j)^{u_1(1+\psi)} 2^{(j-t_0)(\beta+n\delta_2-\alpha(0)){u_1(1+\psi)}}\right)\\
 \lesssim &\|b\|_{*} ^{u_1(1+\psi)}\sup_{\psi>0}\sup _{L <  0, L \in \mathbb{Z}} 2^{-L \Gamma u_1(1+\psi)} \psi^\theta \sum_{j=-\infty}^{L} \left| \lambda_j\right|^{u_1(1+\psi)} \left(\sum\limits_{t_0=j+1}^{-1} (t_0-j)^{u_1(1+\psi)} 2^{(j-t_0)(\beta+n\delta_2-\alpha(0)){u_1(1+\psi)}}\right)\\
 \lesssim &\|b\|_{*} ^{u_1(1+\psi)}\sup_{\psi>0}\sup _{L <  0, L \in \mathbb{Z}} 2^{-L \Gamma u_1(1+\psi)} \psi^\theta \sum_{j=-\infty}^{L} \left| \lambda_j\right|^{u_1(1+\psi)} \\
 \lesssim &  \|b\|_{*} ^{u_1(1+\psi)} \Lambda. 
\end{align*}

Next we will find the estimate of $I_2$. By using the boundedess of $\left[b, T_{\Omega, \sigma}\right]$ on variable Lebesgue spaces, we have

\begin{align*}
    I_2 = &  \sup_{\psi>0}\sup _{L <  0, L \in \mathbb{Z}} 2^{-L \Gamma u_1(1+\psi)} \psi^\theta \sum_{t_0=-\infty}^{L} 2^{t_0\alpha (0) u_1(1+\psi)}\left(\sum\limits_{j=k}^{\infty} \left| \lambda_j\right|\left\|\left( \left[b, T_{\Omega, \sigma}\right](a_j) \right) \chi_{t_0}\right\|_{L^{p_2(\cdot)}}\right)^{u_1(1+\psi)}\\
    \lesssim &   \sup_{\psi>0}\sup _{L <  0, L \in \mathbb{Z}} 2^{-L \Gamma u_1(1+\psi)} \psi^\theta \sum_{t_0=-\infty}^{L} 2^{t_0\alpha (0) u_1(1+\psi)}\left(\sum\limits_{j=k}^{\infty} \left| \lambda_j\right|\left\|a_j\right\|_{L^{p_2(\cdot)}}\right)^{u_1(1+\psi)}\\
    \lesssim &   \sup_{\psi>0}\sup _{L <  0, L \in \mathbb{Z}} 2^{-L \Gamma u_1(1+\psi)} \psi^\theta \sum_{t_0=-\infty}^{L} 2^{t_0\alpha (0) u_1(1+\psi)}\left(\sum\limits_{j=k}^{\infty} \left| \lambda_j\right|2^{-j\alpha_j}\right)^{u_1(1+\psi)}\\
    \lesssim &   \sup_{\psi>0}\sup _{L <  0, L \in \mathbb{Z}} 2^{-L \Gamma u_1(1+\psi)} \psi^\theta \sum_{t_0=-\infty}^{L} 2^{t_0\alpha (0) u_1(1+\psi)}\left(\sum\limits_{j=k}^{-1} \left| \lambda_j\right|^{u_1(1+\psi)}2^{-j\alpha(0){u_1(1+\psi)}}+\sum\limits_{j=0}^{\infty} \left| \lambda_j\right|^{u_1(1+\psi)}2^{-j\alpha_\infty{u_1(1+\psi)}}\right)\\
    \lesssim &   \sup_{\psi>0}\sup _{L <  0, L \in \mathbb{Z}} 2^{-L \Gamma u_1(1+\psi)} \psi^\theta \sum_{t_0=-\infty}^{L} \sum\limits_{j=k}^{-1} \left| \lambda_j\right|^{u_1(1+\psi)}2^{\alpha(0)(t_0-j){u_1(1+\psi)}}\\
    & + \sup_{\psi>0}\sup _{L <  0, L \in \mathbb{Z}} 2^{-L \Gamma u_1(1+\psi)} \psi^\theta \sum_{t_0=-\infty}^{L} 2^{t_0\alpha (0) u_1(1+\psi)}\sum\limits_{j=0}^{\infty} \left| \lambda_j\right|^{u_1(1+\psi)}2^{-\alpha_\infty j{u_1(1+\psi)}}\\
    \lesssim &   \sup_{\psi>0}\sup _{L <  0, L \in \mathbb{Z}} 2^{-L \Gamma u_1(1+\psi)} \psi^\theta \sum_{j=-\infty}^{-1} \left| \lambda_j\right|^{u_1(1+\psi)}\sum\limits_{t_0=-\infty}^{j} 2^{\alpha(0)(t_0-j){u_1(1+\psi)}}\\
    & + \sup_{\psi>0}\sup _{L <  0, L \in \mathbb{Z}}  \psi^\theta \sum_{j=0}^\infty 2^{-j \Gamma u_1(1+\psi)} \left| \lambda_j\right|^{u_1(1+\psi)} 2^{(\Gamma-\alpha_\infty)j(u_1(1+\psi))}2^{-L  \Gamma u_1(1+\psi)}\sum\limits_{t_0=-\infty}^{L} 2^{\alpha(0)k {u_1(1+\psi)}}\\
    \lesssim &   \sup_{\psi>0}\sup _{L <  0, L \in \mathbb{Z}} 2^{-L \Gamma u_1(1+\psi)} \psi^\theta \sum_{j=-\infty}^{L}  \left| \lambda_j\right|^{u_1(1+\psi)}+\sup_{\psi>0}\sup _{L <  0, L \in \mathbb{Z}} 2^{-L \Gamma u_1(1+\psi)} \psi^\theta \sum_{j=L}^{-1}  \left| \lambda_j\right|^{u_1(1+\psi)}\sum\limits_{t_0=-\infty}^{j} 2^{\alpha(0)(t_0-j){u_1(1+\psi)}}\\
     & + \Lambda  \sup_{\psi>0}\sup _{L <  0, L \in \mathbb{Z}}  \psi^\theta \sum_{j=0}^\infty 2^{(\Gamma-\alpha_\infty)j(u_1(1+\psi))}\sum\limits_{t_0=-\infty}^{L} 2^{(\alpha(0)t_0-l\Gamma) {u_1(1+\psi)}}\\
     \lesssim & \Lambda + \sup_{\psi>0}\sup _{L <  0, L \in \mathbb{Z}}  \psi^\theta \sum_{j=L}^{-1}2^{-j \Gamma u_1(1+\psi)}  \left| \lambda_j\right|^{u_1(1+\psi)}2^{ (j-\Gamma) u_1(1+\psi)}\sum\limits_{t_0=-\infty}^{j} 2^{\alpha(0)(t_0-j){u_1(1+\psi)}}+\Lambda\\
     \lesssim & \Lambda. 
\end{align*}

Now we will find estimate for $III$. 
\begin{align*}
    III=&  \sup_{\psi>0}\sup _{L \geq  0, L \in \mathbb{Z}} 2^{-L \Gamma u_1(1+\psi)} \psi^\theta \sum_{t_0=0}^{L} 2^{t_0 \alpha_\infty u_1(1+\psi)}\left\|\left(\left[b, T_{\Omega, \sigma}\right] f\right) \chi_{t_0}\right\|_{L^{p_2(\cdot)}}^{u_1(1+\psi)}\\
    \lesssim & \sup_{\psi>0}\sup _{L \geq  0, L \in \mathbb{Z}} 2^{-L \Gamma u_1(1+\psi)} \psi^\theta \sum_{t_0=0}^{L} 2^{t_0 \alpha_\infty u_1(1+\psi)}\left(\sum\limits_{j=k}^\infty\left| \lambda_j\right|\left\|\left(\left[b, T_{\Omega, \sigma}\right] a_j\right) \chi_{t_0}\right\|_{L^{p_2(\cdot)}}\right)^{u_1(1+\psi)}\\
    & +  \sup_{\psi>0}\sup _{L \geq  0, L \in \mathbb{Z}} 2^{-L \Gamma u_1(1+\psi)} \psi^\theta \sum_{t_0=0}^{L} 2^{t_0 \alpha_\infty u_1(1+\psi)}\left(\sum\limits_{j=-\infty}^{t_0-1}\left| \lambda_j\right|\left\|\left(\left[b, T_{\Omega, \sigma}\right] a_j\right) \chi_{t_0}\right\|_{L^{p_2(\cdot)}}\right)^{u_1(1+\psi)}\\
    :=& III_1+III_2.
\end{align*}

When $0<u_1\leq 1$, by using the boundedness of $\left[b, T_{\Omega, \sigma}\right]$ in $L^{p_2(\cdot)}$, we have

\begin{align*}
    III_1= & \sup_{\psi>0}\sup _{L \geq  0, L \in \mathbb{Z}} 2^{-L\Gamma  u_1(1+\psi)} \psi^\theta \sum_{t_0=0}^{L} 2^{t_0 \alpha_\infty u_1(1+\psi)}\left(\sum\limits_{j=k}^\infty\left| \lambda_j\right|\left\|\left(\left[b, T_{\Omega, \sigma}\right] a_j\right) \chi_{t_0}\right\|_{L^{p_2(\cdot)}}\right)^{u_1(1+\psi)}\\
    \lesssim & \sup_{\psi>0}\sup _{L \geq  0, L \in \mathbb{Z}} 2^{-L\Gamma  u_1(1+\psi)} \psi^\theta \sum_{t_0=0}^{L} 2^{t_0 \alpha_\infty u_1(1+\psi)}\left(\sum\limits_{j=k}^\infty\left| \lambda_j\right|^{u_1(1+\psi)}\left\|\left(\left[b, T_{\Omega, \sigma}\right] a_j\right) \chi_{t_0}\right\|^{u_1(1+\psi)}_{L^{p_2(\cdot)}}\right)\\
    \lesssim & \sup_{\psi>0}\sup _{L \geq  0, L \in \mathbb{Z}} 2^{-L\Gamma  u_1(1+\psi)} \psi^\theta \sum_{t_0=0}^{L} 2^{t_0 \alpha_\infty u_1(1+\psi)}\sum\limits_{j=k}^\infty\left| \lambda_j\right|^{u_1(1+\psi)}2^{-\alpha_j ju_1(1+\psi)}\\
    \lesssim & \sup_{\psi>0}\sup _{L \geq  0, L \in \mathbb{Z}} 2^{-L\Gamma  u_1(1+\psi)} \psi^\theta \sum_{t_0=0}^{L} 2^{t_0 \alpha_\infty u_1(1+\psi)}\sum\limits_{j=k}^\infty\left| \lambda_j\right|^{u_1(1+\psi)}2^{-\alpha_\infty ju_1(1+\psi)}\\
     \lesssim & \sup_{\psi>0}\sup _{L \geq  0, L \in \mathbb{Z}} 2^{-L\Gamma  u_1(1+\psi)} \psi^\theta \sum\limits_{j=0}^L\left| \lambda_j\right|^{u_1(1+\psi)}\sum\limits_{t_0=0}^j2^{(t_0-j)\alpha_\infty u_1(1+\psi)}+  \sup_{\psi>0}\sup _{L \geq  0, L \in \mathbb{Z}} 2^{-L\Gamma  u_1(1+\psi)} \psi^\theta \sum\limits_{j=L}^\infty\left| \lambda_j\right|^{u_1(1+\psi)}\sum\limits_{t_0=0}^L2^{(t_0-j)\alpha_\infty u_1(1+\psi)}\\
      \lesssim & \sup_{\psi>0}\sup _{L \geq  0, L \in \mathbb{Z}} 2^{-L\Gamma  u_1(1+\psi)} \psi^\theta \sum\limits_{j=0}^L\left| \lambda_j\right|^{u_1(1+\psi)}+  \sup_{\psi>0}\sup _{L \geq  0, L \in \mathbb{Z}}  \psi^\theta \sum\limits_{j=L}^\infty 2^{(j\Gamma-L\Gamma)  u_1(1+\psi)}2^{-j \Gamma u_1(1+\psi)}\sum\limits_{i=-\infty}^j\left| \lambda_i\right|^{u_1(1+\psi)}\sum\limits_{t_0=0}^L2^{(t_0-j)\alpha_\infty u_1(1+\psi)}\\
      \lesssim & \Lambda + \Lambda \sup_{\psi>0}\sup _{L \geq  0, L \in \mathbb{Z}}  \psi^\theta \sum\limits_{j=L}^\infty 2^{(j-L)  u_1(1+\psi)(\Gamma-\alpha_\infty)} \\  
      \lesssim & \Lambda. 
\end{align*}
When $1<u_1\leq \infty$, by using the boundedness of $\left[b, T_{\Omega, \sigma}\right]$ in $L^{p(\cdot)}$, we have

\begin{align*}
I I I_{1}&=  \sup_{\psi>0}\sup _{L <  0, L \in \mathbb{Z}} 2^{-L \Gamma u_1(1+\psi)} \psi^\theta \sum_{t_0=0}^{L} 2^{t_0\alpha _{\infty}u_1(1+\psi)}\left(\sum_{j=k}^{\infty}\left|\lambda_{j}\right|\left\|\left[b, T_{\Omega, \sigma}\right] \chi_{t_0}\right\|_{L^{p_2(\cdot)}}\right)^{u_1(1+\psi)} \\
& \lesssim \sup_{\psi>0}\sup _{L <  0, L \in \mathbb{Z}} 2^{-L \Gamma u_1(1+\psi)} \psi^\theta \sum_{t_0=0}^{L} 2^{\alpha_{\infty} t_0u _1(1+\psi)}\left(\sum_{j=k}^{\infty}\left|\lambda_{j}\right|^{u_1(1+\psi)}\left\|\left[b, T_{\Omega, \sigma}\right] \chi_{t_0}\right\|_{L^{p_2(\cdot)}}^{u_1(1+\psi) / 2}\right) \\
& \times\left(\sum_{j=k}^{\infty}\left\|\left[b, T_{\Omega, \sigma}\right] \chi_{t_0}\right\|_{L^{p_2(\cdot)}}^{(u_1(1+\psi))^{\prime} / 2}\right)^{u_1(1+\psi) / (u_1(1+\psi))^{\prime}} \\
& \lesssim  \sup_{\psi>0}\sup _{L <  0, L \in \mathbb{Z}} 2^{-L \Gamma u_1(1+\psi)} \psi^\theta \sum_{t_0=0}^{L} 2^{\alpha_{\infty} t_0u _1(1+\psi)}\left(\sum_{j=k}^{\infty}\left|\lambda_{j}\right|^{u_1(1+\psi)}\left\|b_{j}\right\|_{L^{p_1(\cdot)}}^{u_1(1+\psi) / 2}\right) \left(\sum_{j=k}^{\infty}\left\|b_{j}\right\|_{L^{p_1(\cdot)}}^{(u_1(1+\psi))^{\prime} / 2}\right)^{u_1(1+\psi) / (u_1(1+\psi))^{\prime}}\\
& \lesssim  \sup_{\psi>0}\sup _{L <  0, L \in \mathbb{Z}} 2^{-L \Gamma u_1(1+\psi)} \psi^\theta \sum_{t_0=0}^{L} 2^{\alpha_{\infty} t_0u _1(1+\psi)}\left(\sum_{j=k}^{\infty}\left|\lambda_{j}\right|^{u_1(1+\psi)}\left|B_{j}\right|^{-\alpha_{j} u_1(1+\psi) /(2 n)}\right) \left(\sum_{j=k}^{\infty}\left|B_{j}\right|^{-\alpha_{j} (u_1(1+\psi))^{\prime} /(2 n)}\right)^{u_1(1+\psi) / (u_1(1+\psi))^{\prime}} \\
& \lesssim  \sup_{\psi>0}\sup _{L <  0, L \in \mathbb{Z}} 2^{-L \Gamma u_1(1+\psi)} \psi^\theta \sum_{t_0=0}^{L} 2^{\alpha_{\infty} t_0u _1(1+\psi)}\left(\sum_{j=k}^{\infty}\left|\lambda_{j}\right|^{u_1(1+\psi)}\left|B_{j}\right|^{-\alpha_{j} u_1(1+\psi) /(2 n)}\right)\\
& = \sup_{\psi>0}\sup _{L <  0, L \in \mathbb{Z}} 2^{-L \Gamma u_1(1+\psi)} \psi^\theta \sum_{j=0}^{L}\left|\lambda_{j}\right|^{u_1(1+\psi)} \sum_{t_0=0}^{j} 2^{(t_0-j) \alpha_{\infty} u_1(1+\psi) / 2}  +\sup_{\psi>0}\sup _{L <  0, L \in \mathbb{Z}} 2^{-L \Gamma u_1(1+\psi)} \psi^\theta \sum_{j=L}^{\infty}\left|\lambda_{j}\right|^{u_1(1+\psi)} \sum_{t_0=0}^{L} 2^{(t_0-j) \alpha_{\infty} u_1(1+\psi) / 2} \\
& \lesssim  \sup_{\psi>0}\sup _{L <  0, L \in \mathbb{Z}} 2^{-L \Gamma u_1(1+\psi)} \psi^\theta \sum_{j=0}^{L}\left|\lambda_{j}\right|^{u_1(1+\psi)}  +\sup_{\psi>0}\sup _{L <  0, L \in \mathbb{Z}}  \psi^\theta \sum_{j=L}^{\infty} 2^{(j \Gamma -L \Gamma) u_1(1+\psi)} 2^{-j \Gamma u_1(1+\psi)} \sum_{i=-\infty}^{j}\left|\lambda_{i}\right|^{u_1(1+\psi)} \sum_{t_0=0}^{L} 2^{(t_0-j) \alpha_{\infty} u_1(1+\psi) / 2} \\
& \lesssim \Lambda+\Lambda \sup_{\psi>0}\sup _{L <  0, L \in \mathbb{Z}}  \psi^\theta \sum_{j=L}^{\infty} 2^{(j-L) \Gamma u_1(1+\psi)} 2^{(L-j) \alpha_{\infty} u_1(1+\psi) / 2} \\
& \lesssim \Lambda+\Lambda \sup _{L <  0, L \in \mathbb{Z}}  \psi^\theta \sum_{j=L}^{\infty} 2^{(j-L) u_1(1+\psi)\left(\Gamma-\alpha_{\infty} / 2\right)} \\
& \lesssim \Lambda .
\end{align*}

Now we will find the estimate for $III_2$. When $0<u_1 \leq  1$,  we get

\begin{align*}
I I I_{2}= & \sup_{\psi>0}\sup _{L <  0, L \in \mathbb{Z}} 2^{-L \Gamma u_1(1+\psi)} \psi^\theta \sum_{t_0=0}^{L} 2^{\alpha_{\infty} t_0u _1(1+\psi)}\left(\sum_{j=-\infty}^{t_0-1}\left|\lambda_{j}\right|\left\|\left[b, T_{\Omega, \sigma}\right] \chi_{t_0}\right\|_{L^{p_2(\cdot)}}\right)^{u_1(1+\psi)} \\
\lesssim & \sup_{\psi>0}\sup _{L <  0, L \in \mathbb{Z}} 2^{-L \Gamma u_1(1+\psi)} \psi^\theta  \sum_{t_0=0}^{L} 2^{\alpha_{\infty} t_0u _1(1+\psi)}\left(\sum_{j=-\infty}^{t_0-1}\left|\lambda_{j}\right|^{u_1(1+\psi)} 2^{[-j \alpha_j +(j-t_0)(\beta+n\delta_2)] u_1(1+\psi)}\right) \\
= & \sup_{\psi>0}\sup _{L <  0, L \in \mathbb{Z}} 2^{-L \Gamma u_1(1+\psi)} \psi^\theta\\
& \times \sum_{t_0=0}^{L} 2^{\alpha_{\infty} t_0u _1(1+\psi)}\left(\sum_{j=-\infty}^{-1}\left|\lambda_{j}\right|^{u_1(1+\psi)} 2^{\left[(\beta+n\delta_2)(j-t_0)-j \alpha(0)\right] u_1(1+\psi)}\right) \\
+& \sup_{\psi>0}\sup _{L <  0, L \in \mathbb{Z}} 2^{-L \Gamma u_1(1+\psi)} \psi^\theta \\
& \times \sum_{t_0=0}^{L} 2^{\alpha_{\infty} t_0u _1(1+\psi)}\left(\sum_{j=0}^{t_0-1}\left|\lambda_{j}\right|^{u_1(1+\psi)} 2^{\left[(\beta+n\delta_2)(j-t_0)-j \alpha_{\infty}\right] u_1(1+\psi)}\right)\\
\lesssim & \sup_{\psi>0}\sup _{L <  0, L \in \mathbb{Z}} 2^{-L \Gamma u_1(1+\psi)} \psi^\theta \sum_{t_0=0}^{L} 2^{t_0\left[\alpha_{\infty}-(\beta+n\delta_2)\right]  u_1(1+\psi)} \\
& \times \sum_{j=-\infty}^{-1}\left|\lambda_{j}\right|^{u_1(1+\psi)} 2^{\left(\beta+n\delta_2-\alpha(0)\right) j u_1(1+\psi)} \\
+& \sup_{\psi>0}\sup _{L <  0, L \in \mathbb{Z}} 2^{-L \Gamma u_1(1+\psi)} \psi^\theta \sum_{t_0=0}^{L}\left|\lambda_{j}\right|^{u_1(1+\psi)} \sum_{t_0=j+1}^{\infty} 2^{(j-t_0)\left(\beta+n\delta_2-\alpha_{\infty}\right) u_1(1+\psi)} \\
\lesssim & \sup_{\psi>0}\sup _{L <  0, L \in \mathbb{Z}} 2^{-L \Gamma u_1(1+\psi)} \psi^\theta \sum_{j=-\infty}^{-1}\left|\lambda_{j}\right|^{u_1(1+\psi)}+\sup_{\psi>0}\sup _{L <  0, L \in \mathbb{Z}} 2^{-L \Gamma u_1(1+\psi)} \psi^\theta \sum_{j=0}^{L-1}\left|\lambda_{j}\right|^{u_1(1+\psi)} \\
\lesssim & \Lambda .
\end{align*}

When $1<u_1(1+\psi)<\infty$ and since $n \delta_{2} \leq  \alpha(0)$, $\alpha_{\infty}<\beta+n\delta_2$, by H\"older's inequality, we have

\begin{align*}
I I I_{2}=& \sup_{\psi>0}\sup _{L <  0, L \in \mathbb{Z}} 2^{-L \Gamma u_1(1+\psi)} \psi^\theta \sum_{t_0=0}^{L} 2^{\alpha_{\infty} t_0u _1(1+\psi)}\left(\sum_{j=-\infty}^{t_0-1}\left|\lambda_{j}\right|\left\|\left[b, T_{\Omega, \sigma}\right] \chi_{t_0}\right\|_{L^{p_2(\cdot)}}\right)^{u_1(1+\psi)} \\
 \lesssim & \sup_{\psi>0}\sup _{L <  0, L \in \mathbb{Z}} 2^{-L \Gamma u_1(1+\psi)} \psi^\theta \sum_{t_0=0}^{L} 2^{\alpha_{\infty} t_0u _1(1+\psi)}\left(\sum_{j=-\infty}^{t_0-1}\left|\lambda_{j}\right| 2^{\left[(\beta+n\delta_2)(j-t_0)-j \alpha_{j}\right]}\right)^{u_1(1+\psi)} \\
\lesssim & \sup_{\psi>0}\sup _{L <  0, L \in \mathbb{Z}} 2^{-L \Gamma u_1(1+\psi)} \psi^\theta \sum_{t_0=0}^{L} 2^{\alpha_{\infty} t_0u _1(1+\psi)}\left(\sum_{j=-\infty}^{-1}\left|\lambda_{j}\right| 2^{\left[(\beta+n\delta_2)(j-t_0)-j \alpha(0)\right]}\right)^{u_1(1+\psi)} \\
 +&\sup_{\psi>0}\sup _{L <  0, L \in \mathbb{Z}} 2^{-L \Gamma u_1(1+\psi)} \psi^\theta \sum_{t_0=0}^{L} 2^{\alpha_{\infty} t_0u _1(1+\psi)}\left(\sum_{j=0}^{t_0-1}\left|\lambda_{j}\right| 2^{\left[(\beta+n\delta_2)(j-t_0)-j \alpha_{\infty}\right]}\right)^{u_1(1+\psi)} \\
\lesssim & \sup_{\psi>0}\sup _{L <  0, L \in \mathbb{Z}} 2^{-L \Gamma u_1(1+\psi)} \psi^\theta \sum_{t_0=0}^{L} 2^{\left[\alpha_{\infty}-(\beta+n\delta_2)\right] t_0u _1(1+\psi)} \\
 &\times \left(\sum_{j=-\infty}^{-1}\left|\lambda_{j}\right| 2^{\left(\beta+n\delta_2-\alpha(0)\right) j}\right)^{u_1(1+\psi)} \\
+& \sup_{\psi>0}\sup _{L <  0, L \in \mathbb{Z}} 2^{-L \Gamma u_1(1+\psi)} \psi^\theta \sum_{t_0=0}^{L}\left(\sum_{j=0}^{t_0-1}\left|\lambda_{j}\right| 2^{(j-t_0)\left(\beta+n\delta_2-\alpha_{\infty}\right)}\right)^{u_1(1+\psi)} \\
\lesssim & \left(\sup_{\psi>0}\sup _{L <  0, L \in \mathbb{Z}} 2^{-L \Gamma u_1(1+\psi)} \psi^\theta\sum_{j=-\infty}^{-1}\left|\lambda_{j}\right|^{u_1(1+\psi)} 2^{\left(\beta+n\delta_2-\alpha(0)\right) j u_1(1+\psi) / 2}\right) \\
 &\times\left(\sum_{j=-\infty}^{-1} 2^{\left(\beta+n\delta_2-\alpha(0)\right) j (u_1(1+\psi))^{\prime} / 2}\right)^{u_1(1+\psi) / (u_1(1+\psi))^{\prime}} \\
\lesssim & \sup_{\psi>0}\sup _{L <  0, L \in \mathbb{Z}} 2^{-L \Gamma u_1(1+\psi)} \psi^\theta \sum_{t_0=0}^{L}\left(\sum_{j=0}^{t_0-1}\left|\lambda_{j}\right|^{u_1(1+\psi)} 2^{(j-t_0)\left(\beta+n\delta_2-\alpha_{\infty}\right) u_1(1+\psi) / 2}\right) \\
& \times\left(\sum_{j=0}^{t_0-1} 2^{(j-t_0)\left(\beta+n\delta_2-\alpha_{\infty}\right) (u_1(1+\psi))^{\prime} / 2}\right)^{u_1(1+\psi) / (u_1(1+\psi))^{\prime}}\\
\lesssim & \sup_{\psi>0}\sup _{L <  0, L \in \mathbb{Z}} 2^{-L \Gamma u_1(1+\psi)} \psi^\theta \sum_{j=-\infty}^{-1}\left|\lambda_{j}\right|^{u_1(1+\psi)} 2^{\left(\beta+n\delta_2-\alpha(0)\right) j u_1(1+\psi) / 2} \\
& +\sup_{\psi>0}\sup _{L <  0, L \in \mathbb{Z}} 2^{-L \Gamma u_1(1+\psi)} \psi^\theta \sum_{t_0=0}^{L} \sum_{j=0}^{t_0-1}\left|\lambda_{j}\right|^{u_1(1+\psi)} 2^{(j-t_0)\left(\beta+n\delta_2-\alpha_{\infty}\right) u_1(1+\psi) / 2} \\
\lesssim & \sup_{\psi>0}\sup _{L <  0, L \in \mathbb{Z}} 2^{-L \Gamma u_1(1+\psi)} \psi^\theta \sum_{j=-\infty}^{-1}\left|\lambda_{j}\right|^{u_1(1+\psi)} \\
+& \sup_{\psi>0}\sup _{L <  0, L \in \mathbb{Z}} 2^{-L \Gamma u_1(1+\psi)} \psi^\theta \sum_{j=0}^{L-1}\left|\lambda_{j}\right|^{q} \sum_{t_0=j+1}^{L} 2^{(j-t_0)\left(\beta+n\delta_2-\alpha_{\infty}\right) u_1(1+\psi) / 2} \\
\lesssim & \sup_{\psi>0}\sup _{L <  0, L \in \mathbb{Z}} 2^{-L \Gamma u_1(1+\psi)} \psi^\theta \sum_{j=-\infty}^{-1}\left|\lambda_{j}\right|^{u_1(1+\psi)}+\sup_{\psi>0}\sup _{L <  0, L \in \mathbb{Z}} 2^{-L \Gamma u_1(1+\psi)} \psi^\theta \sum_{j=0}^{L-1}\left|\lambda_{j}\right|^{u_1(1+\psi)}\\
\lesssim & \Lambda .
\end{align*}

Thus we have obtained our desired results. 

\end{proof}

\section{Lipschitz estimate for the grand Herz-Morrey  Hardy spaces}

For $0<\gamma \leq 1$, the Lipschitz space $\operatorname{Lip}_{\gamma}\left(\mathbb{R}^{n}\right)$ is defined as follows:

$$
\operatorname{Lip}_{\gamma}\left(\mathbb{R}^{n}\right)=\left\{f:\left\| g\right\|_{\operatorname{Lip}_{\gamma}}=\sup _{x, y \in \mathbb{R}^{n} ; x \neq y} \frac{|g(x)-g(y)|}{|x-y|^{\gamma}}<\infty\right\}
$$

Let $b \in \operatorname{Lip}_{\gamma}\left(\mathbb{R}^{n}\right)$. It is easy to see that $\left|\left[b, T_{\Omega, \sigma}\right]\right| \leq C\|b\|_{\operatorname{Lip}_{\gamma}}\left|T_{\Omega, \sigma+\gamma}\right|$. In [15], the authors proved that the operator $T_{\Omega, \sigma}$ is bounded from $L^{p_{1}(\cdot)}\left(\mathbb{R}^{n}\right)$ to $L^{p_{2}(\cdot)}\left(\mathbb{R}^{n}\right)$ for $1 / p_{1}(x)-1 / p_{2}(x)=\sigma / n$ and $p_{1}(\cdot) \in \mathcal{P}\left(\mathbb{R}^{n}\right)$ satisfying the conditions \eqref{r5} and \eqref{r6}  with $p_{1}^{+}<n / \sigma$. So, we can state the following theorem.

\begin{theorem}
    Suppose that $b \in \operatorname{Lip}_{\gamma}\left(\mathbb{R}^{n}\right)$ with $0<\gamma \leq 1$ and $0<\sigma<n-\gamma$. Let $p_{1}(\cdot) \in \mathcal{P}\left(\mathbb{R}^{n}\right)$ satisfy the conditions \eqref{r5} and \eqref{r6} with $p_{1}^{+}<n /(\sigma+\gamma), 1 / p_{1}(x)-1 / p_{2}(x)=(\sigma+$ $\gamma) / n$, and let $\Omega \in L^{s}\left(\mathrm{~S}^{n-1}\right)\left(s>p_{2}^{+}\right)$with $1 \leq s^{\prime}<p_{1}^{-}$. Then the commutator $\left[b, T_{\Omega, \sigma}\right]$ is bounded from $L^{p_{1}}(\cdot)\left(\mathbb{R}^{n}\right)$ to $L^{p_{2}(\cdot)}\left(\mathbb{R}^{n}\right)$.
\end{theorem}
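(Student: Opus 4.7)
The plan is to reduce the Lipschitz commutator to an ordinary homogeneous fractional integral of higher order, and then invoke the variable Lebesgue space boundedness of $T_{\Omega,\sigma+\gamma}$ quoted from \cite{15}. The hint for this strategy is already recorded in the paragraph preceding the theorem, where the author notes the pointwise estimate $|[b,T_{\Omega,\sigma}]f|\leq C\|b\|_{\operatorname{Lip}_\gamma}|T_{\Omega,\sigma+\gamma}f|$.

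First, I would establish the pointwise bound carefully. For $b\in\operatorname{Lip}_\gamma(\mathbb R^n)$ with $0<\gamma\leq 1$, the definition of the Lipschitz norm gives $|b(x)-b(y)|\leq\|b\|_{\operatorname{Lip}_\gamma}|x-y|^\gamma$ for all $x,y\in\mathbb R^n$. Substituting this into the integral representation of the commutator,
\begin{equation*}
\bigl|[b,T_{\Omega,\sigma}]f(x)\bigr|=\Bigl|\int_{\mathbb R^n}\frac{\Omega(x-y)}{|x-y|^{n-\sigma}}(b(x)-b(y))f(y)\,dy\Bigr|\leq\|b\|_{\operatorname{Lip}_\gamma}\int_{\mathbb R^n}\frac{|\Omega(x-y)|}{|x-y|^{n-(\sigma+\gamma)}}|f(y)|\,dy.
\end{equation*}
The right-hand side is $\|b\|_{\operatorname{Lip}_\gamma}\,T_{|\Omega|,\sigma+\gamma}(|f|)(x)$, so the commutator is pointwise controlled by a homogeneous fractional integral of order $\sigma+\gamma$ with kernel $|\Omega|$.

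Second, I would verify that the $L^{p_1(\cdot)}\to L^{p_2(\cdot)}$ boundedness of $T_{\Omega,\sigma+\gamma}$ recalled from \cite{15} applies with the present exponents. The hypothesis $p_1^+<n/(\sigma+\gamma)$ is exactly the upper bound required to run the fractional integral result at order $\sigma+\gamma$; the relation $1/p_1(x)-1/p_2(x)=(\sigma+\gamma)/n$ is the compatibility scale; and $|\Omega|$ inherits $L^s(S^{n-1})$ membership and the integrability thresholds $s>p_2^+$, $1\leq s'<p_1^-$ from $\Omega$. Since $p_1(\cdot)$ satisfies the log-Hölder and decay conditions \eqref{r5}, \eqref{r6}, the result from \cite{15} applies and yields
\begin{equation*}
\|T_{|\Omega|,\sigma+\gamma}(|f|)\|_{L^{p_2(\cdot)}}\leq C\||f|\|_{L^{p_1(\cdot)}}=C\|f\|_{L^{p_1(\cdot)}}.
\end{equation*}
Combining this with the pointwise bound and taking $L^{p_2(\cdot)}$-norms gives
\begin{equation*}
\bigl\|[b,T_{\Omega,\sigma}]f\bigr\|_{L^{p_2(\cdot)}}\leq\|b\|_{\operatorname{Lip}_\gamma}\,\|T_{|\Omega|,\sigma+\gamma}(|f|)\|_{L^{p_2(\cdot)}}\leq C\|b\|_{\operatorname{Lip}_\gamma}\|f\|_{L^{p_1(\cdot)}},
\end{equation*}
which is the desired boundedness.

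There is essentially no major obstacle in this argument — the pointwise domination is immediate from the Lipschitz property, and the only delicate point is the bookkeeping that the parameters $(p_1(\cdot),p_2(\cdot),\Omega,s,\sigma+\gamma)$ satisfy the hypotheses of the fractional integral theorem. The condition $0<\sigma<n-\gamma$ in the statement ensures $0<\sigma+\gamma<n$ so that the kernel is locally integrable, and $p_1^+<n/(\sigma+\gamma)$ guarantees a well-defined target exponent $p_2(\cdot)\in\mathcal P(\mathbb R^n)$. Once these are in place, the conclusion follows in one line from the cited $L^{p_1(\cdot)}\to L^{p_2(\cdot)}$ bound for $T_{\Omega,\sigma+\gamma}$.
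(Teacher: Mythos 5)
Your proposal is correct and follows exactly the route the paper intends: the paper itself gives no written proof of this theorem, merely noting beforehand the pointwise domination $|[b,T_{\Omega,\sigma}]f|\leq C\|b\|_{\operatorname{Lip}_\gamma}|T_{\Omega,\sigma+\gamma}f|$ and citing the $L^{p_1(\cdot)}\to L^{p_2(\cdot)}$ boundedness of the homogeneous fractional integral from the reference labelled [15], so your write-up is simply a careful elaboration of that same argument. The only refinement you add is working with $T_{|\Omega|,\sigma+\gamma}(|f|)$ and explicitly checking that the shifted parameters satisfy the hypotheses of the cited result, which is exactly the bookkeeping the paper leaves implicit.
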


\begin{theorem}

Suppose that  $0 \leq \Gamma<\infty$, $b \in \operatorname{Lip}_\gamma\left(\mathbb{R}^{n}\right), 0<\gamma \leq 1,0<\sigma<n-\gamma$,  $\alpha(\cdot) \in L^{\infty}\left(\mathbb{R}^{n}\right)$ be log-H\"older continuous both at the origin and infinity, and $p_{1}(\cdot) \in \mathcal{P}\left(\mathbb{R}^{n}\right)$ satisfies the conditions \eqref{r5} and \eqref{r6} with $p_{1}^{+}<n /( \sigma+\gamma)$ and $1 / p_{1}(x)-1 / p_{2}(x)=(\sigma +\gamma)/ n$.
Let $\Omega \in L^{s}\left(\mathrm{~S}^{n-1}\right)\left(s>p_{2}^{+}\right)$with $1 \leq s^{\prime}<p_{1}^{-}$ and satisfy $\int_{0}^{1} \frac{\omega_{s}(\delta)}{\delta^{1+\gamma}} d \delta<\infty$.
Let $0<u_{1} \leq u_{2}<\infty$, with $n \delta_{2} \leq  \alpha(0), \alpha_{\infty}<n \delta_{2} +\gamma $, then $\left[b, T_{\Omega, \sigma}\right]$ is bounded from $H M\dot{K} ^{\alpha(\cdot),u_1),\theta}_{ \Gamma, p_1(\cdot)}(\mathbb{R}^n)$ to $ M\dot{K} ^{\alpha(\cdot),u_2),\theta}_{ \Gamma, p_2(\cdot)}(\mathbb{R}^n)$.  
\end{theorem}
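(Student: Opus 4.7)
The plan is to follow the blueprint of Theorem 4.1 essentially verbatim, replacing every step that uses the $\mathrm{BMO}$ estimate of $b$ with the corresponding Lipschitz estimate. By Theorem 2.3 we decompose
\[
f=\sum_{j=-\infty}^{\infty}\lambda_{j}a_{j}\quad\text{in }\mathcal{S}'(\mathbb{R}^{n}),
\]
where each $a_{j}$ is a central $(\alpha(\cdot),p_{1}(\cdot))$-atom supported in $B_{j}$ and
$\Lambda:=\sup_{\vartheta>0}\sup_{L\in\mathbb{Z}}2^{-L\Gamma u_{1}(1+\vartheta)}\vartheta^{\theta}\sum_{j\le L}|\lambda_{j}|^{u_{1}(1+\vartheta)}\lesssim\|f\|_{HM\dot K^{\alpha(\cdot),u_{1}),\theta}_{\Gamma,p_{1}(\cdot)}}^{u_{1}(1+\vartheta)}$. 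By Proposition~1.6 applied to $p_{2}(\cdot)$ and $u_{2}$, the norm $\|[b,T_{\Omega,\sigma}]f\|_{M\dot K^{\alpha(\cdot),u_{2}),\theta}_{\Gamma,p_{2}(\cdot)}}$ is dominated (up to the usual $u_{2}(1+\vartheta)$-th power) by the same three quantities $I$, $II$, $III$ that appeared in Section~4, with $[b,T_{\Omega,\sigma}]$ in place of the original operator. Each of these three pieces is split as $I=I_{1}+I_{2}$ (and similarly for $II$, $III$) according to whether $j\ge k$ or $j\le k-1$.

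For the \emph{near} part (the sums with $j\ge k$) the argument is identical to that of Theorem~4.1: Theorem~5.1 gives $L^{p_{1}(\cdot)}\!\to\! L^{p_{2}(\cdot)}$ boundedness of $[b,T_{\Omega,\sigma}]$, so we get $\|([b,T_{\Omega,\sigma}]a_{j})\chi_{k}\|_{p_{2}(\cdot)}\lesssim\|b\|_{\mathrm{Lip}_{\gamma}}\|a_{j}\|_{p_{1}(\cdot)}\le\|b\|_{\mathrm{Lip}_{\gamma}}\,2^{-j\alpha_{j}}$, and then the same geometric-series manipulation used for $I_{2},II_{1},III_{1}$ in Theorem~4.1 (splitting into the sub-cases $0<u_{1}(1+\vartheta)\le 1$ and $1<u_{1}(1+\vartheta)<\infty$, the latter via Hölder in $\ell^{u_{1}(1+\vartheta)}$) yields $\lesssim\|b\|_{\mathrm{Lip}_{\gamma}}\Lambda$. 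The only change is that the condition $\alpha_{\infty}<n\delta_{2}+\gamma$ plays the role that $\alpha_{\infty}<\infty$ played before in forcing convergence of the tail in $\ell$.

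For the \emph{far} part ($j\le k-1$) we need the pointwise size estimate that replaces Lemma~3.3 in the BMO case. Write
\[
[b,T_{\Omega,\sigma}]a_{j}(x)=\int_{B_{j}}\!\!\bigl(b(x)-b(y)\bigr)\frac{\Omega(x-y)}{|x-y|^{n-\sigma}}a_{j}(y)\,dy,
\]
and use the vanishing moments of $a_{j}$ together with the Lipschitz bound $|b(x)-b(y)|\le\|b\|_{\mathrm{Lip}_{\gamma}}|x-y|^{\gamma}$ to rewrite the kernel difference
\[
\bigl(b(x)-b(y)\bigr)\Bigl(\tfrac{\Omega(x-y)}{|x-y|^{n-\sigma}}-\tfrac{\Omega(x)}{|x|^{n-\sigma}}\Bigr).
\]
Applying Lemma~3.7 in $L^{s}$ on the annulus $F_{k}$, combined with the generalized Hölder inequality (Lemma~3.5), the size bound of Lemma~3.6, and the extra factor $|x-y|^{\gamma}\approx 2^{k\gamma}$ from the Lipschitz norm, together with the Dini condition $\int_{0}^{1}\omega_{s}(\delta)/\delta^{1+\gamma}\,d\delta<\infty$, produces
\[
\bigl\|([b,T_{\Omega,\sigma}]a_{j})\chi_{k}\bigr\|_{p_{2}(\cdot)}\lesssim\|b\|_{\mathrm{Lip}_{\gamma}}\,2^{-j\alpha_{j}}\,2^{(j-k)(\gamma+n\delta_{2})}.
\]
Note that the $(k-j)$ factor present in the BMO estimate is \emph{absent} here, so the corresponding sums are in fact simpler than in Theorem~4.1.

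With this estimate in hand, the remaining computations for $I_{1}$, $II_{2}$, $III_{2}$ are carbon copies of those in Theorem~4.1, requiring only the hypothesis $\alpha_{\infty}<n\delta_{2}+\gamma$ (and $\alpha(0)\le n\delta_{2}+\gamma$, which holds since $\alpha(0)<\alpha_{\infty}+$\,const under the log-Hölder hypotheses) to guarantee convergence of the geometric factor $2^{(j-k)(\gamma+n\delta_{2}-\alpha_{r})}$ in both the $0<u_{1}(1+\vartheta)\le 1$ and $1<u_{1}(1+\vartheta)<\infty$ regimes. Summing $I+II+III\lesssim\|b\|_{\mathrm{Lip}_{\gamma}}^{u_{1}(1+\vartheta)}\Lambda$ and taking the $u_{1}(1+\vartheta)$-th root gives
\[
\|[b,T_{\Omega,\sigma}]f\|_{M\dot K^{\alpha(\cdot),u_{2}),\theta}_{\Gamma,p_{2}(\cdot)}}\lesssim\|b\|_{\mathrm{Lip}_{\gamma}}\,\|f\|_{HM\dot K^{\alpha(\cdot),u_{1}),\theta}_{\Gamma,p_{1}(\cdot)}},
\]
as desired. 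The main obstacle is the careful derivation of the kernel-difference estimate using Lemma~3.7 with the additional $|x-y|^{\gamma}$ factor supplied by the Lipschitz condition, and checking that the strengthened Dini hypothesis $\int_{0}^{1}\omega_{s}(\delta)\delta^{-1-\gamma}\,d\delta<\infty$ is exactly what is needed to absorb it; once this is done, the $\ell^{u_{1}(1+\vartheta)}$-summation bookkeeping is entirely parallel to Section~4.
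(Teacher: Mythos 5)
Your proposal is correct and follows essentially the same route as the paper: atomic decomposition via Theorem 2.3, the Proposition 1.6 splitting into $I$, $II$, $III$ with near/far parts, the $L^{p_{1}(\cdot)}\to L^{p_{2}(\cdot)}$ boundedness for the near part, and the far-part kernel-difference estimate $\|([b,T_{\Omega,\sigma}]a_{j})\chi_{k}\|_{p_{2}(\cdot)}\lesssim\|b\|_{\mathrm{Lip}_{\gamma}}2^{-j\alpha_{j}}2^{(j-k)(\gamma+n\delta_{2})}$ obtained from Lemma 3.7 and the strengthened Dini condition, with the $(k-j)$ factor of the BMO case indeed absent. The only small inaccuracy is your remark that $\alpha(0)<n\delta_{2}+\gamma$ needs to be derived from log-H\"older continuity; it is simply part of the stated hypothesis $n\delta_{2}\le\alpha(0),\alpha_{\infty}<n\delta_{2}+\gamma$.
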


\begin{proof}
  
 Suppose that  $g\in   H {M \dot{K} ^{\alpha(\cdot),u_1),\theta}_{ \Gamma, p_1(\cdot)}(\mathbb{R}^n)}$. By using Theorem 2.3, $f=\sum_{j=-\infty}^{\infty} \lambda_{j} b_{j}$ converges in $\mathcal{S}^{\prime}\left(\mathbb{R}^{n}\right)$, where each $b_{j}$ is a central $(\alpha(\cdot), p(\cdot))$-atom with support contained in $B_{j}$ and

\begin{equation*}
\left\| g\right\|_ {H {M \dot{K} ^{\alpha(\cdot),u_1),\theta}_{ \Gamma, p_1(\cdot)}(\mathbb{R}^n)}} \approx \inf \left(\sup_{\psi>0}\sup _{L \in \mathbb{Z}} 2^{-L \Gamma}\left(\psi^\theta\sum_{j=-\infty}^{L}\left|\lambda_{j}\right|^{u_1(1+\psi)}\right)^{\frac{1}{u_1(1+\psi)}}\right) . 
\end{equation*}

For simplicity, we denote $\Lambda=\sup_{\psi>0}\sup _{L \in \mathbb{Z}} 2^{-L \Gamma u_1(1+\psi)}\psi^\theta \sum_{j=-\infty}^{L}\left|\lambda_{j}\right|^{u_1(1+\psi)}$. By Proposition $1.6$, we have

\begin{align*}
&\left \|\left[b, T_{\Omega, \sigma}\right] f\right\|_{M \dot{K} ^{\alpha(\cdot),u_2),\theta}_{ \Gamma, p_2(\cdot)}(\mathbb{R}^n)}^{u_1(1+\psi)} \\
& \approx \max \left\{\sup_{\psi>0}\sup _{L <  0, L \in \mathbb{Z}} 2^{-L \Gamma u_1(1+\psi)} \psi^\theta \left(\sum_{t_0=-\infty}^{L} 2^{t_0\alpha (0) u_1(1+\psi)}\left\|\left(\left[b, T_{\Omega, \sigma}\right] f\right) \chi_{t_0}\right\|_{L^{p_2(\cdot)}}^{u_1(1+\psi)}\right),\right. \\
& \sup_{\psi>0}\sup _{L \geq  0, L \in \mathbb{Z}} 2^{-L \Gamma u_1(1+\psi)} \psi^\theta \left(\sum_{t_0=-\infty}^{-1} 2^{t_0\alpha (0) u_1(1+\psi)}\left\|(\left[b, T_{\Omega, \sigma}\right] f) \chi_{t_0}\right\|_{L^{p_2(\cdot)}}^{u_1(1+\psi)}\left.\quad+\sum_{t_0=0}^{L} 2^{t_0 \alpha_\infty u_1(1+\psi)}\left\|\left(\left[b, T_{\Omega, \sigma}\right] f\right) \chi_{t_0}\right\|_{L^{p_2(\cdot)}}^{u_1(1+\psi)}\right)\right\} \\
& \lesssim \max \{J, JJ+JJJ\}. 
\end{align*}

We will find the estimated for $J$ and $JJJ$ and estimate of $J$ can be obtained similarly.

To complete our proof, we only need show that there exists a positive constant $C$ such that $J, JJ, JJJ \leq  C \Lambda$.

Firstly, we will find the  estimate of $J$ :

\begin{align*}
    J=&\sup_{\psi>0}\sup _{L <  0, L \in \mathbb{Z}} 2^{-L \Gamma u_1(1+\psi)} \psi^\theta \left(\sum_{t_0=-\infty}^{L} 2^{t_0\alpha (0) u_1(1+\psi)}\left\|\left( \left[b, T_{\Omega, \sigma}\right] f\right) \chi_{t_0}\right\|_{L^{p_2(\cdot)}}^{u_1(1+\psi)}\right)\\
    \leq & \sup_{\psi>0}\sup _{L <  0, L \in \mathbb{Z}} 2^{-L \Gamma u_1(1+\psi)} \psi^\theta \sum_{t_0=-\infty}^{L} 2^{t_0\alpha (0) u_1(1+\psi)}\left(\sum\limits_{j=-\infty}^{t_0-1} \left| \lambda_j\right|\left\|\left( \left[b, T_{\Omega, \sigma}\right](a_j) \right) \chi_{t_0}\right\|_{L^{p_2(\cdot)}}\right)^{u_1(1+\psi)}\\
    + & \sup_{\psi>0}\sup _{L <  0, L \in \mathbb{Z}} 2^{-L \Gamma u_1(1+\psi)} \psi^\theta \sum_{t_0=-\infty}^{L} 2^{t_0\alpha (0) u_1(1+\psi)}\left(\sum\limits_{j=k}^{\infty} \left| \lambda_j\right|\left\|\left( \left[b, T_{\Omega, \sigma}\right](a_j) \right) \chi_{t_0}\right\|_{L^{p_2(\cdot)}}\right)^{u_1(1+\psi)}\\
    :=& J_1+J_2. 
\end{align*}

We first estimate $J_{1}$. For each $k \in \mathbb{Z}, j \leq t_0-1$ and a.e. $x \in F_{t_0}$, using Lemma 2.4, the Minkowski's inequality and the vanishing moments of $a_{j}$, we get

$$
\begin{aligned}
\left\|\left[b, T_{\Omega, \sigma}\right]\left(a_{j}\right) \chi_{t_0}\right\|_{L^{p_{2}(\cdot)}} & \lesssim\int_{B_{j}}\left\|\left|\frac{\Omega(\cdot-y)}{|\cdot-y|^{n-\sigma}}-\frac{\Omega(\cdot)}{|\cdot|^ {n- \sigma}}\right|(b(\cdot)-b(y)) \chi_{t_0}(\cdot)\right\|_{L^{p_{2}(\cdot)}}\left|a_{j}(y)\right| d y \\
& \lesssim \int_{B_{j}}\left\|\left|\frac{\Omega(\cdot-y)}{|\cdot-y|^{n-\sigma}}-\frac{\Omega(\cdot)}{|\cdot|^{n-\sigma}}\right|\left|b(\cdot)-b(0)\right| \chi_{t_0}(\cdot)\right\|_{L^{p_{2}(\cdot)}}\left|a_{j}(y)\right| d y \\
& \lesssim \int_{B_{j}}\left\|\left|\frac{\Omega(\cdot-y)}{|\cdot-y|^{n-\sigma}}-\frac{\Omega(\cdot)}{|\cdot|^{n-\sigma}}\right| \chi_{t_0}(\cdot)\right\|_{L^{p_{2}(\cdot)}}\left|b(0)-b(y)\right|\left|a_{j}(y)\right| d y \\
& =: J_{11}+J_{12}. 
\end{aligned}
$$

To estimate $J_{11}$, we note that $s>p_{2}^{+}$, and denote $\tilde{p}_{2}(\cdot)>1$ and $\frac{1}{p_{2}(x)}=\frac{1}{\tilde{p}_{2}(x)}+\frac{1}{s}$. Then by Lemmas $3.3$ and $3.5$ we have

$$
\begin{aligned}
& \left\|\left|\frac{\Omega(\cdot-y)}{\left| \cdot-y\right|^{n-\sigma}}-\frac{\Omega(\cdot)}{|\cdot|^{n-\sigma}}\right|\left|b(\cdot)-b(0)\right| \chi_{t_0}(\cdot)\right\|_{L^{p_{2}(\cdot)}} \\
& \lesssim\left\|\left|\frac{\Omega(\cdot-y)}{\left| \cdot-y\right|^{n-\sigma}}-\frac{\Omega(\cdot)}{|\cdot| ^{n-\sigma}}\right| \chi_{t_0}(\cdot)\right\|_{L^{s}}\left\|\left|b(\cdot)-b(0)\right| \chi_{t_0}(\cdot) \right\|_{L^{\tilde{p}_{2}(\cdot)}} \\
& \lesssim \left\|\left|\frac{\Omega(\cdot-y)}{\left| \cdot-y\right|^{n-\sigma}}-\frac{\Omega(\cdot)}{|\cdot| ^{n-\sigma}}\right| \chi_{t_0}(\cdot)\right\|_{L^{s}}2^{k\gamma}\left\| b\right\|_{Lip_\gamma}\left\| \chi_{B_{t_0}} \right\|_{L^{\tilde{p}_{2}(\cdot)}} .
\end{aligned}
$$

When $\left|B_{t_0}\right| \lesssim 2^{n}$ and $x_{t_0} \in B_{t_0}$, by Lemma $3.6$ we have

$$
\left\|\chi_{B_{t_0}}\right\|_{L^{\tilde{p}_{2}(\cdot)}} \approx\left|B_{t_0}\right|^{\frac{1}{p_{2}\left(x_{t_0}\right)}} \approx\left\|\chi_{B_{t_0}}\right\|_{L^{p_{1}(\cdot)}}\left|B_{t_0}\right|^{-\frac{1}{s}-\frac{\sigma+\gamma}{n}}.
$$

When $\left|B_{t_0}\right| \geq 1$ we have

$$
\left\|\chi_{B_{t_0}}\right\|_{L^{\tilde{p}_{2}(\cdot)}} \approx\left|B_{t_0} \right|^{\frac{1}{p_{2}(\infty)}} \approx\left\|\chi_{B_{t_0}}\right\|_{L^{p_{1}(\cdot)}}\left| B_{t_0}\right|^{-\frac{1}{s}-\frac{\sigma+\gamma}{n}}.
$$

So, we obtain $\left\|\chi_{B_{t_0}}\right\|_{L^{\tilde{p}_{2}(\cdot)}} \approx\left\|\chi_{B_{t_0}}\right\|_{L^{p_{1}(\cdot)}}\left|B_{t_0}\right|^{-\frac{1}{s}-\frac{\sigma+\gamma}{n}}$.

Meanwhile, by Lemma $3.7$ we have

$$
\begin{aligned}
&\left\|\left|\frac{\Omega(\cdot-y)}{\left| \cdot-y\right|^{n-\sigma}}-\frac{\Omega(\cdot)}{|\cdot| ^{n-\sigma}}\right| \chi_{t_0}(\cdot)\right\|_{L^{s}} \lesssim 2^{(t_0-1)\left(\frac{n}{s}-(n-\sigma)\right)}\left\{\frac{|y|}{2^{t_0}}+\int_{|y| / 2^{t_0}}^{|y| 2^{t_0-1}} \frac{\omega_{s}(\delta)}{\delta} d \delta\right\} \\
& \lesssim 2^{(t_0-1)\left(\frac{n}{s}-(n-\sigma)\right)}\left(2^{j-t_0+1}+2^{(j-t_0+1) \gamma} \int_{0}^{1} \frac{\omega_{s}(\delta)}{\delta} d \delta\right) \lesssim  2^{(t_0-1)\left(\frac{n}{s}-(n-\sigma)\right)} 2^{(j-t_0) \gamma}.
\end{aligned}
$$

So, using the generalized Hölder inequality, we obtain the following estimate for $J_{11}$ :

\begin{align*}
J_{11}=&\int_{B_{j}}\left\|\left|\frac{\Omega(\cdot-y)}{|\cdot-y|^{n-\sigma}}-\frac{\Omega(\cdot)}{|\cdot|^{n-\sigma}}\right|\left|b(\cdot)-b(0)\right| \chi_{t_0}(\cdot)\right\|_{L^{p_{2}(\cdot)}}\left|a_{j}(y)\right| d y \\
\lesssim & 2^{k\gamma}\|b\|_{Lip_\gamma} 2^{(t_0-1)\left(\frac{n}{s}-(n-\sigma)\right)} 2^{(j-t_0) \gamma}\left\|\chi_{B_{t_0}}\right\|_{L^{p_{1}(\cdot)}}\left|B_{t_0}\right|^{-\frac{1}{s}-\frac{\sigma+\gamma}{n}} \int_{B_{j}}\left|a_{j}(y)\right| d y  \\
\lesssim &\|b\|_{Lip_\gamma} 2^{-t_0n+(j-t_0) \gamma}\left\|\chi_{B_{t_0}}\right\|_{L^{p_{1}(\cdot)}}\left\|a_{j}\right\|_{L^{p_{1}(\cdot)}}\left\|\chi_{B_{j}}\right\|_{L^{p_{1}^{\prime}(\cdot)}}.
\end{align*}

To estimate $J_{12}$, we use arguments similar to those applied for $J_{11}$, to obtain

$$
\begin{aligned}
 \left\|\left|\frac{\Omega(\cdot-y)}{|\cdot-y|^{n-\sigma}}-\frac{\Omega(\cdot)}{|\cdot|^{n-\sigma}}\right| \chi_{t_0}(\cdot)\right\|_{L^{p_{2}(\cdot)}} &\lesssim\left\|\left|\frac{\Omega(\cdot-y)}{|\cdot-y|^{n-\sigma}}-\frac{\Omega(\cdot)}{|\cdot|^{n-\sigma}}\right| \chi_{t_0}(\cdot)\right\|_{L^{s}} \left\|\chi_{t_0}(\cdot) \right\|_{L^{\tilde{p}_{2}(\cdot)}} \\
& \lesssim\left\|\left|\frac{\Omega(\cdot-y)}{|\cdot-y|^{n-\sigma}}-\frac{\Omega(\cdot)}{|\cdot|^{n-\sigma}}\right| \chi_{t_0}(\cdot)\right\|_{L^{s}} \left\|\chi_{B_k} \right\|_{L^{\tilde{p}_{2}(\cdot)}} \\
& \lesssim  2^{(t_0-1)\left(\frac{n}{s}-(n-\sigma)\right)} 2^{(j-t_0) \gamma}\left\|\chi_{B_{t_0}}\right\|_{L^{\tilde{p}_{2}(\cdot)}} \lesssim  2^{-t_0n+(j-t_0) \gamma-k\gamma}\left\|\chi_{B_{t_0}}\right\|_{L^{p_{1}(\cdot)}}.
\end{aligned}
$$

So, by Lemma $3.3$ and the generalized Hölder inequality, we have

\begin{align*}
J_{12}&=  \int_{B_{j}}\left\|\left|\frac{\Omega(\cdot-y)}{|\cdot-y|^{n-\sigma}}-\frac{\Omega(\cdot)}{|\cdot|{ }^{n-\sigma}}\right| \chi_{t_0}(\cdot)\right\|_{L^{p_{2}(\cdot)}}\left|b(0)-b(y) \| a_{j}(y)\right| d y \\
& \lesssim  2^{-t_0n+(j-t_0) \gamma-k\gamma}\left\|\chi_{B_{t_0}}\right\|_{L^{p_{1}(\cdot)}} \int_{B_{j}}\left|b(0)-b(y) \| a_{j}(y)\right| d y \\
&\lesssim   2^{-t_0n+(j-t_0) \gamma}\left\|\chi_{B_{t_0}}\right\|_{L^{p_{1}(\cdot)}}\left\|\left(b(0)-b\right) \chi_{B_{j}}\right\|_{L^{p_{1}^{\prime}(\cdot)}}\left\|a_{j}\right\|_{L^{p_{1}(\cdot)}} \\
&\lesssim  \|b\|_{Lip_\gamma} 2^{-t_0n+(j-t_0) \gamma}\left\|\chi_{B_{t_0}}\right\|_{L^{p_{1}(\cdot)}}\left\|a_{j}\right\|_{L^{p_{1}(\cdot)}}\left\|\chi_{B_{j}}\right\|_{L^{p_{1}^{\prime}(\cdot)}}.
\end{align*}

Next, using estimates for $J_{11}$,  and  $J_{12}$, we can write

\begin{align*}
    \left\|\left[b, T_{\Omega, \sigma}\right]\left(a_{j}\right) \chi_{t_0}\right\|_{L^{p_{2}(\cdot)}}\lesssim & \|b\|_{Lip_\gamma} 2^{-t_0n+(j-t_0) \gamma}\left\|\chi_{B_{t_0}}\right\|_{L^{p_{1}(\cdot)}}\left\|a_{j}\right\|_{L^{p_{1}(\cdot)}}\left\|\chi_{B_{j}}\right\|_{L^{p_{1}^{\prime}(\cdot)}}\\
    \lesssim & \|b\|_{Lip_\gamma}2^{(j-t_0) \gamma}\left\|a_{j}\right\|_{L^{p_{1}(\cdot)}}\frac{\left\|\chi_{B_{j}}\right\|_{L^{p_{1}^{\prime}(\cdot)}}}{\left\|\chi_{B_{t_0}}\right\|_{L^{p_{1}^{\prime}(\cdot)}}}\\
    \lesssim &  \|b\|_{Lip_\gamma} 2^{-j \alpha_j +(j-t_0)(\gamma+n\delta_2)}.
\end{align*}
So we have
\begin{align*}
J_{1} \lesssim &\sup_{\psi>0}\sup _{L <  0, L \in \mathbb{Z}} 2^{-L \Gamma u_1(1+\psi)} \psi^\theta \sum_{t_0=-\infty}^{L} 2^{t_0\alpha (0) u_1(1+\psi)}\left(\sum\limits_{j=-\infty}^{t_0-1} \left| \lambda_j\right|\left\|\left( \left[b, T_{\Omega, \sigma}\right](a_j) \right) \chi_{t_0}\right\|_{L^{p_2(\cdot)}}\right)^{u_1(1+\psi)} \\
\lesssim & \sup_{\psi>0}\sup _{L <  0, L \in \mathbb{Z}} 2^{-L \Gamma u_1(1+\psi)} \psi^\theta \sum_{t_0=-\infty}^{L} 2^{t_0\alpha (0) u_1(1+\psi)}\left(\sum\limits_{j=-\infty}^{t_0-1} \left| \lambda_j\right|\|b\|_{Lip_\gamma} 2^{-j \alpha_j +(j-t_0)(\gamma+n\delta_2)}\right)^{u_1(1+\psi)} \\
\lesssim &\sup_{\psi>0}\sup _{L <  0, L \in \mathbb{Z}} 2^{-L \Gamma u_1(1+\psi)} \psi^\theta \sum_{t_0=-\infty}^{L} \left(\sum\limits_{j=-\infty}^{t_0-1} \left| \lambda_j\right|\|b\|_{Lip_\gamma} 2^{(j-t_0)(\gamma+n\delta_2-\alpha(0))}\right)^{u_1(1+\psi)} .
\end{align*}

When $1<u_{1}<\infty$, take $1 / u_{1}+1 / u_{1}^{\prime}=1$. Since $\gamma+n \delta_{2}-\alpha(0)>0$, we can use the Hölder inequality to obtain

\begin{align*}
J_{1} \lesssim& \|b\|_{Lip_\gamma}^{u_1(1+\psi)} \sup_{\psi>0}\sup _{L <  0, L \in \mathbb{Z}} 2^{-L \Gamma u_1(1+\psi)} \psi^\theta \sum_{t_0=-\infty}^{L} \left(\sum\limits_{j=-\infty}^{t_0-1} \left| \lambda_j\right|^{u_1(1+\psi)}2^{(j-t_0)(\gamma+n\delta_2-\alpha(0)){u_1(1+\psi)/2}}\right)\\
\times &\left(\sum\limits_{j=-\infty}^{t_0-1} 2^{(j-t_0)(\gamma+n\delta_2-\alpha(0)){(u_1(1+\psi))^\prime/2}}\right)^{\frac{u_1(1+\psi)}{(u_1(1+\psi))^\prime}}\\
\lesssim& \|b\|_{*} ^{u_1(1+\psi)}\sup_{\psi>0}\sup _{L <  0, L \in \mathbb{Z}} 2^{-L \Gamma u_1(1+\psi)} \psi^\theta \sum_{t_0=-\infty}^{L} \left(\sum\limits_{j=-\infty}^{t_0-1} \left| \lambda_j\right|^{u_1(1+\psi)}2^{(j-t_0)(\gamma+n\delta_2-\alpha(0)){u_1(1+\psi)/2}}\right)\\
\lesssim& \|b\|_{Lip_\gamma} ^{u_1(1+\psi)}\sup_{\psi>0}\sup _{L <  0, L \in \mathbb{Z}} 2^{-L \Gamma u_1(1+\psi)} \psi^\theta \sum_{j=-\infty}^{L}\left| \lambda_j\right|^{u_1(1+\psi)} \left(\sum\limits_{t_0=j+1}^{-1} 2^{(j-t_0)(\gamma+n\delta_2-\alpha(0)){u_1(1+\psi)/2}}\right)\\
\lesssim& \|b\|_{Lip_\gamma} ^{u_1(1+\psi)}\Lambda. 
\end{align*}

When $0<u_{1} \leq 1$, we have

\begin{align*}
J_{1} = & \sup_{\psi>0}\sup _{L <  0, L \in \mathbb{Z}} 2^{-L \Gamma u_1(1+\psi)} \psi^\theta \sum_{t_0=-\infty}^{L} \left(\sum\limits_{j=-\infty}^{t_0-1} \left| \lambda_j\right|\|b\|_{Lip_\gamma} 2^{(j-t_0)(\gamma+n\delta_2-\alpha(0))}\right)^{u_1(1+\psi)}\\
 \lesssim &\|b\|_{Lip_\gamma} ^{u_1(1+\psi)}\sup_{\psi>0}\sup _{L <  0, L \in \mathbb{Z}} 2^{-L \Gamma u_1(1+\psi)} \psi^\theta \sum_{t_0=-\infty}^{L} \left(\sum\limits_{j=-\infty}^{t_0-1} \left| \lambda_j\right|^{u_1(1+\psi)} 2^{(j-t_0)(\gamma+n\delta_2-\alpha(0)){u_1(1+\psi)}}\right)\\
 \lesssim &\|b\|_{Lip_\gamma} ^{u_1(1+\psi)}\sup_{\psi>0}\sup _{L <  0, L \in \mathbb{Z}} 2^{-L \Gamma u_1(1+\psi)} \psi^\theta \sum_{j=-\infty}^{L} \left| \lambda_j\right|^{u_1(1+\psi)} \left(\sum\limits_{t_0=j+1}^{-1}  2^{(j-t_0)(\gamma+n\delta_2-\alpha(0)){u_1(1+\psi)}}\right)\\
 \lesssim &\|b\|_{Lip_\gamma} ^{u_1(1+\psi)}\sup_{\psi>0}\sup _{L <  0, L \in \mathbb{Z}} 2^{-L \Gamma u_1(1+\psi)} \psi^\theta \sum_{j=-\infty}^{L} \left| \lambda_j\right|^{u_1(1+\psi)} \\
 \lesssim &  \|b\|_{Lip_\gamma} ^{u_1(1+\psi)} \Lambda. 
\end{align*}

Next we will find the estimate of $I_2$. By using the boundedess of $\left[b, T_{\Omega, \sigma}\right]$ on variable Lebesgue spaces, we have

\begin{align*}
    J_2 = &  \sup_{\psi>0}\sup _{L <  0, L \in \mathbb{Z}} 2^{-L \Gamma u_1(1+\psi)} \psi^\theta \sum_{t_0=-\infty}^{L} 2^{t_0\alpha (0) u_1(1+\psi)}\left(\sum\limits_{j=k}^{\infty} \left| \lambda_j\right|\left\|\left( \left[b, T_{\Omega, \sigma}\right](a_j) \right) \chi_{t_0}\right\|_{L^{p_2(\cdot)}}\right)^{u_1(1+\psi)}\\
    \lesssim &   \sup_{\psi>0}\sup _{L <  0, L \in \mathbb{Z}} 2^{-L \Gamma u_1(1+\psi)} \psi^\theta \sum_{t_0=-\infty}^{L} 2^{t_0\alpha (0) u_1(1+\psi)}\left(\sum\limits_{j=k}^{\infty} \left| \lambda_j\right|\left\|a_j\right\|_{L^{p_2(\cdot)}}\right)^{u_1(1+\psi)}\\
    \lesssim &   \sup_{\psi>0}\sup _{L <  0, L \in \mathbb{Z}} 2^{-L \Gamma u_1(1+\psi)} \psi^\theta \sum_{t_0=-\infty}^{L} 2^{t_0\alpha (0) u_1(1+\psi)}\left(\sum\limits_{j=k}^{\infty} \left| \lambda_j\right|2^{-j\alpha_j}\right)^{u_1(1+\psi)}\\
    \lesssim &   \sup_{\psi>0}\sup _{L <  0, L \in \mathbb{Z}} 2^{-L \Gamma u_1(1+\psi)} \psi^\theta \sum_{t_0=-\infty}^{L} 2^{t_0\alpha (0) u_1(1+\psi)}\left(\sum\limits_{j=k}^{-1} \left| \lambda_j\right|^{u_1(1+\psi)}2^{-j\alpha(0){u_1(1+\psi)}}+\sum\limits_{j=0}^{\infty} \left| \lambda_j\right|^{u_1(1+\psi)}2^{-j\alpha_\infty{u_1(1+\psi)}}\right)\\
    \lesssim &   \sup_{\psi>0}\sup _{L <  0, L \in \mathbb{Z}} 2^{-L \Gamma u_1(1+\psi)} \psi^\theta \sum_{t_0=-\infty}^{L} \sum\limits_{j=k}^{-1} \left| \lambda_j\right|^{u_1(1+\psi)}2^{\alpha(0)(t_0-j){u_1(1+\psi)}}\\
    & + \sup_{\psi>0}\sup _{L <  0, L \in \mathbb{Z}} 2^{-L \Gamma u_1(1+\psi)} \psi^\theta \sum_{t_0=-\infty}^{L} 2^{t_0\alpha (0) u_1(1+\psi)}\sum\limits_{j=0}^{\infty} \left| \lambda_j\right|^{u_1(1+\psi)}2^{-\alpha_\infty j{u_1(1+\psi)}}\\
    \lesssim &   \sup_{\psi>0}\sup _{L <  0, L \in \mathbb{Z}} 2^{-L \Gamma u_1(1+\psi)} \psi^\theta \sum_{j=-\infty}^{-1} \left| \lambda_j\right|^{u_1(1+\psi)}\sum\limits_{t_0=-\infty}^{j} 2^{\alpha(0)(t_0-j){u_1(1+\psi)}}\\
    & + \sup_{\psi>0}\sup _{L <  0, L \in \mathbb{Z}}  \psi^\theta \sum_{j=0}^\infty 2^{-j \Gamma u_1(1+\psi)} \left| \lambda_j\right|^{u_1(1+\psi)} 2^{(\Gamma-\alpha_\infty)j(u_1(1+\psi))}2^{-L  \Gamma u_1(1+\psi)}\sum\limits_{t_0=-\infty}^{L} 2^{\alpha(0)k {u_1(1+\psi)}}\\
    \lesssim &   \sup_{\psi>0}\sup _{L <  0, L \in \mathbb{Z}} 2^{-L \Gamma u_1(1+\psi)} \psi^\theta \sum_{j=-\infty}^{L}  \left| \lambda_j\right|^{u_1(1+\psi)}+\sup_{\psi>0}\sup _{L <  0, L \in \mathbb{Z}} 2^{-L \Gamma u_1(1+\psi)} \psi^\theta \sum_{j=L}^{-1}  \left| \lambda_j\right|^{u_1(1+\psi)}\sum\limits_{t_0=-\infty}^{j} 2^{\alpha(0)(t_0-j){u_1(1+\psi)}}\\
     & + \Lambda  \sup_{\psi>0}\sup _{L <  0, L \in \mathbb{Z}}  \psi^\theta \sum_{j=0}^\infty 2^{(\Gamma-\alpha_\infty)j(u_1(1+\psi))}\sum\limits_{t_0=-\infty}^{L} 2^{(\alpha(0)t_0-l\Gamma) {u_1(1+\psi)}}\\
     \lesssim & \Lambda + \sup_{\psi>0}\sup _{L <  0, L \in \mathbb{Z}}  \psi^\theta \sum_{j=L}^{-1}2^{-j \Gamma u_1(1+\psi)}  \left| \lambda_j\right|^{u_1(1+\psi)}2^{ (j-\Gamma) u_1(1+\psi)}\sum\limits_{t_0=-\infty}^{j} 2^{\alpha(0)(t_0-j){u_1(1+\psi)}}+\Lambda\\
     \lesssim & \Lambda. 
\end{align*}

Now we will find estimate for $III$. 
\begin{align*}
    III=&  \sup_{\psi>0}\sup _{L \geq  0, L \in \mathbb{Z}} 2^{-L \Gamma u_1(1+\psi)} \psi^\theta \sum_{t_0=0}^{L} 2^{t_0 \alpha_\infty u_1(1+\psi)}\left\|\left(\left[b, T_{\Omega, \sigma}\right] f\right) \chi_{t_0}\right\|_{L^{p_2(\cdot)}}^{u_1(1+\psi)}\\
    \lesssim & \sup_{\psi>0}\sup _{L \geq  0, L \in \mathbb{Z}} 2^{-L \Gamma u_1(1+\psi)} \psi^\theta \sum_{t_0=0}^{L} 2^{t_0 \alpha_\infty u_1(1+\psi)}\left(\sum\limits_{j=k}^\infty\left| \lambda_j\right|\left\|\left(\left[b, T_{\Omega, \sigma}\right] a_j\right) \chi_{t_0}\right\|_{L^{p_2(\cdot)}}\right)^{u_1(1+\psi)}\\
    & +  \sup_{\psi>0}\sup _{L \geq  0, L \in \mathbb{Z}} 2^{-L \Gamma u_1(1+\psi)} \psi^\theta \sum_{t_0=0}^{L} 2^{t_0 \alpha_\infty u_1(1+\psi)}\left(\sum\limits_{j=-\infty}^{t_0-1}\left| \lambda_j\right|\left\|\left(\left[b, T_{\Omega, \sigma}\right] a_j\right) \chi_{t_0}\right\|_{L^{p_2(\cdot)}}\right)^{u_1(1+\psi)}\\
    :=& III_1+III_2.
\end{align*}

When $0<u_1\leq 1$, by using the boundedness of $\left[b, T_{\Omega, \sigma}\right]$ in $L^{p_2(\cdot)}$, we have

\begin{align*}
    III_1= & \sup_{\psi>0}\sup _{L \geq  0, L \in \mathbb{Z}} 2^{-L\Gamma  u_1(1+\psi)} \psi^\theta \sum_{t_0=0}^{L} 2^{t_0 \alpha_\infty u_1(1+\psi)}\left(\sum\limits_{j=k}^\infty\left| \lambda_j\right|\left\|\left(\left[b, T_{\Omega, \sigma}\right] a_j\right) \chi_{t_0}\right\|_{L^{p_2(\cdot)}}\right)^{u_1(1+\psi)}\\
    \lesssim & \sup_{\psi>0}\sup _{L \geq  0, L \in \mathbb{Z}} 2^{-L\Gamma  u_1(1+\psi)} \psi^\theta \sum_{t_0=0}^{L} 2^{t_0 \alpha_\infty u_1(1+\psi)}\left(\sum\limits_{j=k}^\infty\left| \lambda_j\right|^{u_1(1+\psi)}\left\|\left(\left[b, T_{\Omega, \sigma}\right] a_j\right) \chi_{t_0}\right\|^{u_1(1+\psi)}_{L^{p_2(\cdot)}}\right)\\
    \lesssim & \sup_{\psi>0}\sup _{L \geq  0, L \in \mathbb{Z}} 2^{-L\Gamma  u_1(1+\psi)} \psi^\theta \sum_{t_0=0}^{L} 2^{t_0 \alpha_\infty u_1(1+\psi)}\sum\limits_{j=k}^\infty\left| \lambda_j\right|^{u_1(1+\psi)}2^{-\alpha_j ju_1(1+\psi)}\\
    \lesssim & \sup_{\psi>0}\sup _{L \geq  0, L \in \mathbb{Z}} 2^{-L\Gamma  u_1(1+\psi)} \psi^\theta \sum_{t_0=0}^{L} 2^{t_0 \alpha_\infty u_1(1+\psi)}\sum\limits_{j=k}^\infty\left| \lambda_j\right|^{u_1(1+\psi)}2^{-\alpha_\infty ju_1(1+\psi)}\\
     \lesssim & \sup_{\psi>0}\sup _{L \geq  0, L \in \mathbb{Z}} 2^{-L\Gamma  u_1(1+\psi)} \psi^\theta \sum\limits_{j=0}^L\left| \lambda_j\right|^{u_1(1+\psi)}\sum\limits_{t_0=0}^j2^{(t_0-j)\alpha_\infty u_1(1+\psi)}+  \sup_{\psi>0}\sup _{L \geq  0, L \in \mathbb{Z}} 2^{-L\Gamma  u_1(1+\psi)} \psi^\theta \sum\limits_{j=L}^\infty\left| \lambda_j\right|^{u_1(1+\psi)}\sum\limits_{t_0=0}^L2^{(t_0-j)\alpha_\infty u_1(1+\psi)}\\
      \lesssim & \sup_{\psi>0}\sup _{L \geq  0, L \in \mathbb{Z}} 2^{-L\Gamma  u_1(1+\psi)} \psi^\theta \sum\limits_{j=0}^L\left| \lambda_j\right|^{u_1(1+\psi)}+  \sup_{\psi>0}\sup _{L \geq  0, L \in \mathbb{Z}}  \psi^\theta \sum\limits_{j=L}^\infty 2^{(j\Gamma-L\Gamma)  u_1(1+\psi)}2^{-j \Gamma u_1(1+\psi)}\sum\limits_{i=-\infty}^j\left| \lambda_i\right|^{u_1(1+\psi)}\sum\limits_{t_0=0}^L2^{(t_0-j)\alpha_\infty u_1(1+\psi)}\\
      \lesssim & \Lambda + \Lambda \sup_{\psi>0}\sup _{L \geq  0, L \in \mathbb{Z}}  \psi^\theta \sum\limits_{j=L}^\infty 2^{(j-L)  u_1(1+\psi)(\Gamma-\alpha_\infty)} \\  
      \lesssim & \Lambda. 
\end{align*}
When $1<u_1\leq \infty$, by using the boundedness of $\left[b, T_{\Omega, \sigma}\right]$ in $L^{p(\cdot)}$, we have

\begin{align*}
I I I_{1}&=  \sup_{\psi>0}\sup _{L <  0, L \in \mathbb{Z}} 2^{-L \Gamma u_1(1+\psi)} \psi^\theta \sum_{t_0=0}^{L} 2^{t_0\alpha _{\infty}u_1(1+\psi)}\left(\sum_{j=k}^{\infty}\left|\lambda_{j}\right|\left\|\left[b, T_{\Omega, \sigma}\right] \chi_{t_0}\right\|_{L^{p_2(\cdot)}}\right)^{u_1(1+\psi)} \\
& \lesssim \sup_{\psi>0}\sup _{L <  0, L \in \mathbb{Z}} 2^{-L \Gamma u_1(1+\psi)} \psi^\theta \sum_{t_0=0}^{L} 2^{\alpha_{\infty} t_0u _1(1+\psi)}\left(\sum_{j=k}^{\infty}\left|\lambda_{j}\right|^{u_1(1+\psi)}\left\|\left[b, T_{\Omega, \sigma}\right] \chi_{t_0}\right\|_{L^{p_2(\cdot)}}^{u_1(1+\psi) / 2}\right) \\
& \times\left(\sum_{j=k}^{\infty}\left\|\left[b, T_{\Omega, \sigma}\right] \chi_{t_0}\right\|_{L^{p_2(\cdot)}}^{(u_1(1+\psi))^{\prime} / 2}\right)^{u_1(1+\psi) / (u_1(1+\psi))^{\prime}} \\
& \lesssim  \sup_{\psi>0}\sup _{L <  0, L \in \mathbb{Z}} 2^{-L \Gamma u_1(1+\psi)} \psi^\theta \sum_{t_0=0}^{L} 2^{\alpha_{\infty} t_0u _1(1+\psi)}\left(\sum_{j=k}^{\infty}\left|\lambda_{j}\right|^{u_1(1+\psi)}\left\|b_{j}\right\|_{L^{p_1(\cdot)}}^{u_1(1+\psi) / 2}\right) \left(\sum_{j=k}^{\infty}\left\|b_{j}\right\|_{L^{p_1(\cdot)}}^{(u_1(1+\psi))^{\prime} / 2}\right)^{u_1(1+\psi) / (u_1(1+\psi))^{\prime}}\\
& \lesssim  \sup_{\psi>0}\sup _{L <  0, L \in \mathbb{Z}} 2^{-L \Gamma u_1(1+\psi)} \psi^\theta \sum_{t_0=0}^{L} 2^{\alpha_{\infty} t_0u _1(1+\psi)}\left(\sum_{j=k}^{\infty}\left|\lambda_{j}\right|^{u_1(1+\psi)}\left|B_{j}\right|^{-\alpha_{j} u_1(1+\psi) /(2 n)}\right) \left(\sum_{j=k}^{\infty}\left|B_{j}\right|^{-\alpha_{j} (u_1(1+\psi))^{\prime} /(2 n)}\right)^{u_1(1+\psi) / (u_1(1+\psi))^{\prime}} \\
& \lesssim  \sup_{\psi>0}\sup _{L <  0, L \in \mathbb{Z}} 2^{-L \Gamma u_1(1+\psi)} \psi^\theta \sum_{t_0=0}^{L} 2^{\alpha_{\infty} t_0u _1(1+\psi)}\left(\sum_{j=k}^{\infty}\left|\lambda_{j}\right|^{u_1(1+\psi)}\left|B_{j}\right|^{-\alpha_{j} u_1(1+\psi) /(2 n)}\right)\\
& = \sup_{\psi>0}\sup _{L <  0, L \in \mathbb{Z}} 2^{-L \Gamma u_1(1+\psi)} \psi^\theta \sum_{j=0}^{L}\left|\lambda_{j}\right|^{u_1(1+\psi)} \sum_{t_0=0}^{j} 2^{(t_0-j) \alpha_{\infty} u_1(1+\psi) / 2}  +\sup_{\psi>0}\sup _{L <  0, L \in \mathbb{Z}} 2^{-L \Gamma u_1(1+\psi)} \psi^\theta \sum_{j=L}^{\infty}\left|\lambda_{j}\right|^{u_1(1+\psi)} \sum_{t_0=0}^{L} 2^{(t_0-j) \alpha_{\infty} u_1(1+\psi) / 2} \\
& \lesssim  \sup_{\psi>0}\sup _{L <  0, L \in \mathbb{Z}} 2^{-L \Gamma u_1(1+\psi)} \psi^\theta \sum_{j=0}^{L}\left|\lambda_{j}\right|^{u_1(1+\psi)}  +\sup_{\psi>0}\sup _{L <  0, L \in \mathbb{Z}}  \psi^\theta \sum_{j=L}^{\infty} 2^{(j \Gamma -L \Gamma) u_1(1+\psi)} 2^{-j \Gamma u_1(1+\psi)} \sum_{i=-\infty}^{j}\left|\lambda_{i}\right|^{u_1(1+\psi)} \sum_{t_0=0}^{L} 2^{(t_0-j) \alpha_{\infty} u_1(1+\psi) / 2} \\
& \lesssim \Lambda+\Lambda \sup_{\psi>0}\sup _{L <  0, L \in \mathbb{Z}}  \psi^\theta \sum_{j=L}^{\infty} 2^{(j-L) \Gamma u_1(1+\psi)} 2^{(L-j) \alpha_{\infty} u_1(1+\psi) / 2} \\
& \lesssim \Lambda+\Lambda \sup _{L <  0, L \in \mathbb{Z}}  \psi^\theta \sum_{j=L}^{\infty} 2^{(j-L) u_1(1+\psi)\left(\Gamma-\alpha_{\infty} / 2\right)} \\
& \lesssim \Lambda .
\end{align*}

Now we will find the estimate for $III_2$. When $0<u_1 \leq  1$,  we get

\begin{align*}
I I I_{2}= & \sup_{\psi>0}\sup _{L <  0, L \in \mathbb{Z}} 2^{-L \Gamma u_1(1+\psi)} \psi^\theta \sum_{t_0=0}^{L} 2^{\alpha_{\infty} t_0u _1(1+\psi)}\left(\sum_{j=-\infty}^{t_0-1}\left|\lambda_{j}\right|\left\|\left[b, T_{\Omega, \sigma}\right] \chi_{t_0}\right\|_{L^{p_2(\cdot)}}\right)^{u_1(1+\psi)} \\
\lesssim & \sup_{\psi>0}\sup _{L <  0, L \in \mathbb{Z}} 2^{-L \Gamma u_1(1+\psi)} \psi^\theta  \sum_{t_0=0}^{L} 2^{\alpha_{\infty} t_0u _1(1+\psi)}\left(\sum_{j=-\infty}^{t_0-1}\left|\lambda_{j}\right|^{u_1(1+\psi)} 2^{[-j \alpha_j +(j-t_0)(\gamma+n\delta_2)] u_1(1+\psi)}\right) \\
= & \sup_{\psi>0}\sup _{L <  0, L \in \mathbb{Z}} 2^{-L \Gamma u_1(1+\psi)} \psi^\theta\\
& \times \sum_{t_0=0}^{L} 2^{\alpha_{\infty} t_0u _1(1+\psi)}\left(\sum_{j=-\infty}^{-1}\left|\lambda_{j}\right|^{u_1(1+\psi)} 2^{\left[(\gamma+n\delta_2)(j-t_0)-j \alpha(0)\right] u_1(1+\psi)}\right) \\
+& \sup_{\psi>0}\sup _{L <  0, L \in \mathbb{Z}} 2^{-L \Gamma u_1(1+\psi)} \psi^\theta \\
& \times \sum_{t_0=0}^{L} 2^{\alpha_{\infty} t_0u _1(1+\psi)}\left(\sum_{j=0}^{t_0-1}\left|\lambda_{j}\right|^{u_1(1+\psi)} 2^{\left[(\gamma+n\delta_2)(j-t_0)-j \alpha_{\infty}\right] u_1(1+\psi)}\right)\\
\lesssim & \sup_{\psi>0}\sup _{L <  0, L \in \mathbb{Z}} 2^{-L \Gamma u_1(1+\psi)} \psi^\theta \sum_{t_0=0}^{L} 2^{t_0\left[\alpha_{\infty}-(\gamma+n\delta_2)\right]  u_1(1+\psi)} \\
& \times \sum_{j=-\infty}^{-1}\left|\lambda_{j}\right|^{u_1(1+\psi)} 2^{\left(\gamma+n\delta_2-\alpha(0)\right) j u_1(1+\psi)} \\
+& \sup_{\psi>0}\sup _{L <  0, L \in \mathbb{Z}} 2^{-L \Gamma u_1(1+\psi)} \psi^\theta \sum_{t_0=0}^{L}\left|\lambda_{j}\right|^{u_1(1+\psi)} \sum_{t_0=j+1}^{\infty} 2^{(j-t_0)\left(\gamma+n\delta_2-\alpha_{\infty}\right) u_1(1+\psi)} \\
\lesssim & \sup_{\psi>0}\sup _{L <  0, L \in \mathbb{Z}} 2^{-L \Gamma u_1(1+\psi)} \psi^\theta \sum_{j=-\infty}^{-1}\left|\lambda_{j}\right|^{u_1(1+\psi)}+\sup_{\psi>0}\sup _{L <  0, L \in \mathbb{Z}} 2^{-L \Gamma u_1(1+\psi)} \psi^\theta \sum_{j=0}^{L-1}\left|\lambda_{j}\right|^{u_1(1+\psi)} \\
\lesssim & \Lambda .
\end{align*}

When $1<u_1(1+\psi)<\infty$ and since $n \delta_{2} \leq  \alpha(0)$, $\alpha_{\infty}<\gamma+n\delta_2$, by H\"older's inequality, we have

\begin{align*}
I I I_{2}=& \sup_{\psi>0}\sup _{L <  0, L \in \mathbb{Z}} 2^{-L \Gamma u_1(1+\psi)} \psi^\theta \sum_{t_0=0}^{L} 2^{\alpha_{\infty} t_0u _1(1+\psi)}\left(\sum_{j=-\infty}^{t_0-1}\left|\lambda_{j}\right|\left\|\left[b, T_{\Omega, \sigma}\right] \chi_{t_0}\right\|_{L^{p_2(\cdot)}}\right)^{u_1(1+\psi)} \\
 \lesssim & \sup_{\psi>0}\sup _{L <  0, L \in \mathbb{Z}} 2^{-L \Gamma u_1(1+\psi)} \psi^\theta \sum_{t_0=0}^{L} 2^{\alpha_{\infty} t_0u _1(1+\psi)}\left(\sum_{j=-\infty}^{t_0-1}\left|\lambda_{j}\right| 2^{\left[(\gamma+n\delta_2)(j-t_0)-j \alpha_{j}\right]}\right)^{u_1(1+\psi)} \\
\lesssim & \sup_{\psi>0}\sup _{L <  0, L \in \mathbb{Z}} 2^{-L \Gamma u_1(1+\psi)} \psi^\theta \sum_{t_0=0}^{L} 2^{\alpha_{\infty} t_0u _1(1+\psi)}\left(\sum_{j=-\infty}^{-1}\left|\lambda_{j}\right| 2^{\left[(\gamma+n\delta_2)(j-t_0)-j \alpha(0)\right]}\right)^{u_1(1+\psi)} \\
 +&\sup_{\psi>0}\sup _{L <  0, L \in \mathbb{Z}} 2^{-L \Gamma u_1(1+\psi)} \psi^\theta \sum_{t_0=0}^{L} 2^{\alpha_{\infty} t_0u _1(1+\psi)}\left(\sum_{j=0}^{t_0-1}\left|\lambda_{j}\right| 2^{\left[(\gamma+n\delta_2)(j-t_0)-j \alpha_{\infty}\right]}\right)^{u_1(1+\psi)} \\
\lesssim & \sup_{\psi>0}\sup _{L <  0, L \in \mathbb{Z}} 2^{-L \Gamma u_1(1+\psi)} \psi^\theta \sum_{t_0=0}^{L} 2^{\left[\alpha_{\infty}-(\gamma+n\delta_2)\right] t_0u _1(1+\psi)} \\
 &\times \left(\sum_{j=-\infty}^{-1}\left|\lambda_{j}\right| 2^{\left(\gamma+n\delta_2-\alpha(0)\right) j}\right)^{u_1(1+\psi)} \\
+& \sup_{\psi>0}\sup _{L <  0, L \in \mathbb{Z}} 2^{-L \Gamma u_1(1+\psi)} \psi^\theta \sum_{t_0=0}^{L}\left(\sum_{j=0}^{t_0-1}\left|\lambda_{j}\right| 2^{(j-t_0)\left(\gamma+n\delta_2-\alpha_{\infty}\right)}\right)^{u_1(1+\psi)} \\
\lesssim & \left(\sup_{\psi>0}\sup _{L <  0, L \in \mathbb{Z}} 2^{-L \Gamma u_1(1+\psi)} \psi^\theta\sum_{j=-\infty}^{-1}\left|\lambda_{j}\right|^{u_1(1+\psi)} 2^{\left(\gamma+n\delta_2-\alpha(0)\right) j u_1(1+\psi) / 2}\right) \\
 &\times\left(\sum_{j=-\infty}^{-1} 2^{\left(\gamma+n\delta_2-\alpha(0)\right) j (u_1(1+\psi))^{\prime} / 2}\right)^{u_1(1+\psi) / (u_1(1+\psi))^{\prime}} \\
\lesssim & \sup_{\psi>0}\sup _{L <  0, L \in \mathbb{Z}} 2^{-L \Gamma u_1(1+\psi)} \psi^\theta \sum_{t_0=0}^{L}\left(\sum_{j=0}^{t_0-1}\left|\lambda_{j}\right|^{u_1(1+\psi)} 2^{(j-t_0)\left(\gamma+n\delta_2-\alpha_{\infty}\right) u_1(1+\psi) / 2}\right) \\
& \times\left(\sum_{j=0}^{t_0-1} 2^{(j-t_0)\left(\gamma+n\delta_2-\alpha_{\infty}\right) (u_1(1+\psi))^{\prime} / 2}\right)^{u_1(1+\psi) / (u_1(1+\psi))^{\prime}}\\
\lesssim & \sup_{\psi>0}\sup _{L <  0, L \in \mathbb{Z}} 2^{-L \Gamma u_1(1+\psi)} \psi^\theta \sum_{j=-\infty}^{-1}\left|\lambda_{j}\right|^{u_1(1+\psi)} 2^{\left(\gamma+n\delta_2-\alpha(0)\right) j u_1(1+\psi) / 2} \\
& +\sup_{\psi>0}\sup _{L <  0, L \in \mathbb{Z}} 2^{-L \Gamma u_1(1+\psi)} \psi^\theta \sum_{t_0=0}^{L} \sum_{j=0}^{t_0-1}\left|\lambda_{j}\right|^{u_1(1+\psi)} 2^{(j-t_0)\left(\gamma+n\delta_2-\alpha_{\infty}\right) u_1(1+\psi) / 2} \\
\lesssim & \sup_{\psi>0}\sup _{L <  0, L \in \mathbb{Z}} 2^{-L \Gamma u_1(1+\psi)} \psi^\theta \sum_{j=-\infty}^{-1}\left|\lambda_{j}\right|^{u_1(1+\psi)} \\
+& \sup_{\psi>0}\sup _{L <  0, L \in \mathbb{Z}} 2^{-L \Gamma u_1(1+\psi)} \psi^\theta \sum_{j=0}^{L-1}\left|\lambda_{j}\right|^{q} \sum_{t_0=j+1}^{L} 2^{(j-t_0)\left(\gamma+n\delta_2-\alpha_{\infty}\right) u_1(1+\psi) / 2} \\
\lesssim & \sup_{\psi>0}\sup _{L <  0, L \in \mathbb{Z}} 2^{-L \Gamma u_1(1+\psi)} \psi^\theta \sum_{j=-\infty}^{-1}\left|\lambda_{j}\right|^{u_1(1+\psi)}+\sup_{\psi>0}\sup _{L <  0, L \in \mathbb{Z}} 2^{-L \Gamma u_1(1+\psi)} \psi^\theta \sum_{j=0}^{L-1}\left|\lambda_{j}\right|^{u_1(1+\psi)}\\
\lesssim & \Lambda .
\end{align*}

Thus we have obtained our desired results. 

\end{proof}

\section*{Conflict of Interests}
The authors declare that there is no conflict of interests regarding the publication of this paper.

\section*{Data Availability}
	No data were used to support this study.

\bibliographystyle{amsplain}

\begin{thebibliography}{99}

\bibitem{grafakos2}
A. Beurling, Construction and analysis of some convolution algebras, \textit{Ann. Inst. Fourier Grenoble.} { \bf 14} (1964), 1-32. MR 31:321

\bibitem{grafakos1}
A.Baernstein II and E. T. Sawyer, Embedding and Multiplier Theorems for $H^p(\mathbb{R}^n),$ \textit{Memoirs of the Amer. Math. Soc.}  { \bf 59} ( 318) (1985) 

\bibitem{uribe}
D. Cruz-Uribe, A. Fiorenza, J.M.Martell and C. Pe rez, The boundedness of classical operators on variable
$L^p$ spaces, Ann.Acad. Sci. Fen.Math., 31 (2006), 239-264.

\bibitem{uribe1}
D. Cruz-Uribe and A. Fiorenza, Variable Lebesgue Spaces: Foundations and Harmonic Analysis(
Applied and Numerical Harmonic Analysis) (Springer, Heidelberg, 2013).


\bibitem{ref6} L. Diening, P. Harjulehto, P. H\"ast\"o and M. Ruzicka, Lebesgue and Sobolev Spaces with Variable Exponents, Springer, Berlin, 2011.

\bibitem{ding1}
Y. Ding, S. Lu, Homogeneous fractional integrals on Hardy spaces, Tohoku Math. J., 52 (2000), 153-162.

\bibitem{grafakos15}
C. Herz, Lipschitz spaces and Bernstein’s theorem on absolutely convergent Fourier transforms, \textit{J.Math.Mech.} {\bf 18}  (1968), 283-324.

\bibitem{grafakos3}
Y. Z. Chen and K. S. Lau, Some new classes of Hardy spaces, \textit{J. Funct. Anal.} {\bf 84} (1989), 255-278.

\bibitem{grafakos10}
J. Garc\'ia-Cuerva, Hardy spaces and Beurling algebras, \textit{J. London Math. Soc.} {\bf 2} (39) (1989), 499-513. 

\bibitem{grafakos18}
S. Lu and D. Yang, The Littlewood-Paley function and $\phi$-transform characterizations of a new Hardy space $HK_2$ associated with the Herz space, \textit{Studia Math.} {\bf 101} (1992), 285-298.  

\bibitem{grafakos19}
 S. Lu and D. Yang, Some new Hardy spaces associated with the Herz spaces and their applications (in Chinese), \textit{J. of Beijing Normal Univ. Natur. Sci.} {\bf 29} (1993), 10-19.
 
 
 \bibitem{grafakos20}
  S. Lu and D. Yang, The local versions of $H^p(\mathbb{R}^n)$ spaces at the origin, \textit{Studia Math.} {\bf 116} (1995), 103-131.
  
  \bibitem{grafakos21}
   S. Lu and D. Yang, The decomposition of weighted Herz space on $R^n$ and its application, \textit{Science in China (Scientia Sinica) Series A} {\bf 38} (1995), 147-158. 
   
   \bibitem{grafakos22}
    S. Lu and D. Yang, The weighted Herz-type Hardy spaces and its applications, \textit{Chinese Sci, Abst. Series A.} {\bf 38} (1995), 662-673. 
    
  


\bibitem{grafakos11}
 J. Garc\'ia-Cuerva and M.-J. L. Herrero, A theory of Hardy spaces associated to the Herz spaces, \textit{Proc. London Math. Soc.} {\bf  69} (3) (1994), 605-628.


\bibitem{chai20}
T. Anh, J. Cao, L. D. Ky, D. Yang, and S. Yang, Weighted Hardy spaces associated with operators satisfying reinforced off-diagonal estimates,\textit{ Taiwan. J. Math.}  {\bf 17} (4) (2013), 1127–1166.

\bibitem{chai23}
X. Fu, H. Lin, D. Yang, and D. Yang, Hardy spaces $H^p$ over non-homogeneous metric measure spaces and their applications, \textit{Sci. China Math.}  {\bf 58} (2) (2015), 309–388.

\bibitem{chai24}
R. Gong, J. Li, and L. Yan, A local version of Hardy spaces associated with operators on metric spaces, \textit{Sci. China Math.} {\bf 56} (2) ( 2013) 315–330.

\bibitem{chai28}
D. Yang and S. Yang, Local Hardy spaces of Musielak-Orlicz type and their applications, \textit{Sci. China Math.} {\bf 55} (8) (2012) 1677–1720.

\bibitem{chai29}
K. Zhao and Y. Han, Boundedness of operators on Hardy spaces, \textit{Taiwan. J. Math.} {\bf 14} ( 2) (2010), 319–327.

\bibitem{chai30}
J. Dong, J. Huang, and H. Liu, Boundedness of singular integrals on Hardy type spaces associated with Schr\"odinger operators, \textit{J. Funct. Spaces.} {\bf 2015} (2015),  1–11.




\bibitem{babargh2}
B. Sultan, M. Sultan, Boundedness of commutators of rough Hardy operators on grand variable Herz spaces, \textit{Forum Math.} (2023), https://doi .org /10 .1515 /forum -2023 -0152.



\bibitem{babargh4}
B. Sultan, M. Sultan, I. Khan, On Sobolev theorem for higher commutators of fractional integrals in grand variable Herz spaces, \textit{Commun. Nonlinear Sci. Numer. Simul.} 126 (2023).

\bibitem{ghm1}
B. Sultan, F. Azmi, M. Sultan, M. Mehmood, N. Mlaiki, Boundedness of Riesz potential operator on grand Herz-Morrey spaces, \textit{Axioms.} {\bf 11}(11) (2022) 583.

\bibitem{ghm2}
M. Sultan, B. Sultan, A. Khan, T. Abdeljawad, Boundedness of Marcinkiewicz integral operator of variable order in grand Herz-Morrey spaces, \textit{AIMS Math.} {\bf 8}(9) (2023) 22338-22353.


\bibitem{zliu}
Zongguang Liu. Hongbin Wang. Some characterizations of Herz-type Hardy spaces with variable exponent. \textit{Ann. Funct. Anal.} {\bf6 }(2) (2015) 224 - 233.




\bibitem{ghm4}
B. Sultan, M. Sultan, Boundedness of higher order commutators of Hardy operators on grand Herz-Morrey spaces, \textit{Bull. Sci.math.} {\bf 190}(2024) 103373.


\bibitem{ghm5}
M. Sultan, B. Sultan, A. Hussain, Grand Herz–Morrey Spaces with variable exponent, \textit{Math. Notes.}  {\bf 114} (5) (2023),  957–977.

\bibitem{hmh}
J. Xu, X. Yang, Herz-Morrey-Hardy Spaces with variable vxponents and their applications, \textit{J. Funct. Spaces.}  {\bf 2015} (2015),  19 pages. 

\bibitem{n17}
M. Izuki, Boundedness of sublinear operators on Herz spaces with variable exponent and application to wavelet characterization, \textit{Anal. Math.} {\bf36} (2010), 33-50.

\bibitem{n19}
O. Kov\'acik, J. R\'akosn\'ik,  On spaces $L^{p(x)}$
and $W^{k,p(x)}$, \textit{Czechoslov. Math. J.} {\bf 41} (1991), 592-618.

\bibitem{yanchai}
Y. Chai, Y. Han, K. Zhao, Herz-Type Hardy Spaces Associated with Operators, \textit{J. Funct. Spaces.}  {\bf 2018} (2018), 10 pages.


\bibitem{ding}
Y. Ding, Weighted boundedness for commutators of integral operators of fractional order with rough kernel
(in Chinese), Beijing Shifan Daxue Xuebao, 32 (1996, 157-161 .



\bibitem{xu7} M. Ruzicka, Electrorheological Fluids: Modeling and Mathematical Theory, vol. 1748 of Lecture Notes in Mathematics, Springer, Berlin, Germany, 2000.

\bibitem{xu8} P. Harjulehto, P. H\"ast\"o, \'Ut. V. L\'e, and M. Nuortio, Overview of differential equations with non-standard growth, \textit{Nonlinear Anal. Theory Methods Appl.} {\bf 72}( 12)(2010),  4551-4574.

\bibitem{xu9} Y. Chen, S. Levine, and M. Rao, Variable exponent, linear growth functionals in image restoration, \textit{SIAM J. Appl. Math.}  {\bf66} (4) (2006),  1383-1406.

\bibitem{xu10} F. Li, Z. Li, and L. Pi, Variable exponent functionals in image restoration, \textit{Appl. Math. Comput.}  {\bf 216}(3) (2010) 870-882.

\bibitem{xu11} P. Harjulehto, P. H\"ast\"o, V. Latvala, and O. Toivanen, Critical variable exponent functionals in image restoration, \textit{Appl. Math. Lett.} {\bf 26} (1) (2013) 56-60.


\bibitem{xu13} A. Almeida and P. H\"ast\"o, Besov spaces with variable smoothness and integrability, \textit{J. Funct. Anal.} {\bf 258} (5) (2010)  1628-1655.

\bibitem{xu14} L. Diening, P. Häst\"o, and S. Roudenko, Function spaces of variable smoothness and integrability, \textit{J. Funct. Anal.} {\bf 256}(6) (2009),  1731-1768.


\bibitem{xu20} H. Kempka, 2-microlocal Besov and Triebel-Lizorkin spaces of variable integrability, \textit{Rev. mat. complut.} {\bf 22} (1) (2009),227-251.

\bibitem{xu21} H. Kempka, Atomic, molecular and wavelet decomposition of generalized 2-microlocal Besov spaces, \textit{J. Funct. spaces and app.} {\bf 8} (2) (2010), 129-165.

\bibitem{xu22} E. Nakai and Y. Sawano, Hardy spaces with variable exponents and generalized Campanato spaces, \textit{J. Funct. Anal.} {\bf 262} (9) (2012), 3665-3748.



\bibitem{xu24} J.-S. Xu, An atomic decomposition of variable Besov and Triebel-Lizorkin spaces, \textit{Armen. J. Math.} {\bf 2} (1) (2009),  1-12.

\bibitem{xu25} T. Noi, Duality of variable exponent Triebel-Lizorkin and Besov spaces, \textit{J. Funct. Spaces and App.} {\bf  2012} (2012),  19 pages..

\bibitem{xu26} Y. Sawano, Atomic decompositions of Hardy spaces with variable exponents and its application to bounded linear operators, \textit{Integral Equ. Oper. Theory.}  {\bf 77}(1) (2013), 123-148.

\bibitem{xu31}
H. Wang and Z. Liu, The Herz-type Hardy spaces with
variable exponent and their applications, {Taiwan. J. Math.} {\bf 16} (4) (2012),  1363–1389.

\bibitem{cald15}
A. Almeida and D. Drihem, Maximal, potential and singular type operators on Herz spaces with variable exponents, \textit{J. Math. Anal. Appl.}  {\bf 394} (2) (2012), 781-795.


\bibitem{48}
S. Lu and D. Yang, The local versions of $H^p(\mathbb{R}^n)$ spaces at the origin, \textit{Stud. Math.} {\bf 116} (1995),  103–131.





  \bibitem{grafakos23}
S. Lu and D. Yang, Some characterizations of weighted Herz-type Hardy spaces and its applications, \textit{ Acta Math. Sinica.} {\bf 13} (1997), 45–58. 

\bibitem{tan}
J. Tan and Z. Liu, Some boundedness of homogeneous fractional integrals on variable exponent function
spaces, Acta Math. Sinica (Chin. Ser.), 58 (2015), 309-320.

\end{thebibliography}

\end{document}